\documentclass[11pt]{amsart}

\usepackage[margin=1.15in]{geometry}
\usepackage{microtype}

\usepackage{amsmath,amssymb,amsthm}
\usepackage{mathtools}
\usepackage{bm}
\usepackage[pagewise]{lineno}
\usepackage{enumitem}
\usepackage[colorlinks=true,linkcolor=blue,citecolor=blue,urlcolor=blue]{hyperref}
\numberwithin{equation}{section}

\theoremstyle{plain}
\newtheorem{theorem}{Theorem}[section]
\newtheorem{lemma}[theorem]{Lemma}
\newtheorem{proposition}[theorem]{Proposition}

\theoremstyle{definition}
\newtheorem{definition}[theorem]{Definition}

\theoremstyle{remark}
\newtheorem{remark}[theorem]{Remark}

\title{Unique mild solution for the ES-BGK model with the correct Prandtl number}

\author{Hyun-Woo Jeong}
\thanks{Hyun-Woo Jeong:
Department of Mathematics, Sungkyunkwan University,
Suwon 16419, Republic of Korea.
Email: \texttt{hwj1222@skku.edu}.}

\author{Sungsu Park}
\thanks{Sungsu Park:
Department of Mathematics, Sungkyunkwan University,
Suwon 16419, Republic of Korea.
Email: \texttt{ppssu@skku.edu}.
Corresponding author.}

\date{}

\begin{document}
	
	\begin{abstract}
        We prove the existence and uniqueness of mild solutions to the ellipsoidal BGK (ES-BGK) model with the correct Prandtl number, under suitable weighted bounds and a free-transport non-vacuum condition. At the correct Prandtl number, the temperature tensor is no longer uniformly comparable to the scalar temperature, so classical scalar moment estimates do not directly yield the required Gaussian bounds. We derive anisotropic velocity moment estimates by splitting velocity space along the longest principal direction of the stress tensor and using the geometry of transverse elliptical slices. These estimates yield a linear weighted $L^{\infty}$ bound for the ellipsoidal Gaussian. Together with uniform ellipticity and weighted $L^1$-Lipschitz continuity under the stated assumptions, this bound allows us to construct unique mild solutions.  
	\end{abstract}
	
	\maketitle
	\tableofcontents
   \noindent\textbf{keywords:} ES-BGK model; global well-posedness; anisotropic Gaussian; weighted $L^\infty$ estimate.
	\section{Introduction}
	\label{sec:introduction}
    
    We consider the Cauchy problem for the ellipsoidal BGK (ES-BGK) model, which is a relaxation model of the Boltzmann equation
    \begin{align}\label{Eq:ESBGK nu}
        \begin{split}
            &\partial_t f +v\cdot \nabla_x f=\frac{1}{1-\nu}(\mathcal{M}_\nu[f]-f),\cr
            &f(0)=f_0.
        \end{split}
    \end{align} 
    The velocity distribution $f(t,x,v)$ denotes the number density of gas particles in the phase space $\mathbb{R}^3\times\mathbb{R}^3$ at time $t\ge 0$. The non-isotropic Gaussian $\mathcal{M}_{\nu}[f]$ is defined by
    \begin{align*}
        \begin{split}
            \mathcal{M}_{\nu}[f]=\frac{\rho}{\sqrt{\det(2\pi \mathcal{T}_{\nu})}}\exp\left(-\frac{1}{2}(v-U)^T\mathcal{T}^{-1}_{\nu}(v-U) \right),
        \end{split}
    \end{align*}
    where $\rho$, $U$, $T$, and $\Theta$ are the macroscopic density, bulk velocity, temperature, and stress tensor, respectively, which are given by
    \begin{align*}
        \begin{split}
            \rho(t,x)&=\int_{\mathbb{R}^3}f(t,x,v)dv,\cr
            \rho(t,x)U(t,x)&=\int_{\mathbb{R}^3}vf(t,x,v)dv,\cr
            3\rho(t,x)T(t,x)&=\int_{\mathbb{R}^3} |v-U(t,x)|^2f(t,x,v)dv,\cr
            \rho(t,x)\Theta(t,x)&=\int_{\mathbb{R}^3}(v-U)\otimes (v-U) f(t,x,v)dv\cr
            &=\int_{\mathbb{R}^3}v\otimes v f(t,x,v)dv-\rho U\otimes U.
        \end{split}
    \end{align*}
    The temperature tensor $\mathcal{T}_{\nu}$ is defined as a linear combination of $TI$ and $\Theta$:
    \begin{align*}
        \begin{split}
            \mathcal{T}_{\nu}&=
            \begin{pmatrix}
                (1-\nu)T+\nu\Theta_{11} & \nu\Theta_{12} & \nu\Theta_{13}\\
                \nu\Theta_{21} & (1-\nu)T+\nu\Theta_{22} & \nu\Theta_{23}\\
                \nu\Theta_{31} & \nu\Theta_{32} & (1-\nu)T+\nu\Theta_{33}
            \end{pmatrix}\cr
            &=(1-\nu)T I+\nu\Theta.
        \end{split}
    \end{align*}
    The relaxation operator $\mathcal{M}_\nu[f]-f$ shares important properties with the Boltzmann equation. One is the conservation laws of mass, momentum, and energy:
    \begin{align*}
        \begin{split}
            \int_{\mathbb{R}^6}(\mathcal{M}_{\nu}[f]-f)\begin{pmatrix}
				1\\ v\\ |v|^2
			\end{pmatrix}dxdv=0.
        \end{split}
    \end{align*}
    The other is entropy dissipation, which was verified in \cite{andries2000gaussian}:
    \begin{align*}
        \begin{split}
            \frac{d}{dt}\int_{\mathbb{R}^6} f\ln fdxdv\leq 0.
        \end{split}
    \end{align*}

    The Boltzmann equation provides a fundamental kinetic description of dilute gases, but the complexity of its collision operator makes numerical simulations computationally demanding. To reduce this cost, the BGK model was introduced in \cite{bhatnagar1954model} by replacing the collision operator with a relaxation term. Its computational simplicity and preservation of key physical properties of the Boltzmann equation have led to its widespread use in science and engineering. However, although the BGK model recovers the correct Euler limit, its Navier--Stokes approximation yields an incorrect Prandtl number, which is proportional to the ratio between the viscosity and the thermal conductivity. To address this limitation, Holway \cite{Holway1966} introduced the ES-BGK model, whose anisotropic Gaussian relaxation state allows the Prandtl number to be adjusted. 

    \subsection{The correct Prandtl number at
\texorpdfstring{$\nu=-1/2$}{nu=-1/2}} The dimensionless parameter \(\nu\in[-1/2,1)\), called the Prandtl parameter, determines how stress anisotropy enters the relaxation Gaussian. Indeed, the temperature tensor can be written as 
    \begin{align*}
        \begin{split}
            \mathcal T_\nu=TI+\nu(\Theta-TI),   
        \end{split}
    \end{align*}
    where \(\Theta-TI\) is the traceless part of the stress tensor. In particular, \(\nu=0\) gives an isotropic Maxwellian and recovers the classical BGK model. The parameter \(\nu\) adjusts the relative relaxation rates of viscous stress and heat flux, and the Chapman–Enskog expansion yields
    \begin{align*}
        \begin{split}
            \mathrm{Pr}=\frac{c_p\mu}{\kappa}=\frac{1}{1-\nu},    
        \end{split}
    \end{align*}
    where \(\mu\), \(\kappa\), and \(c_p\) denote the viscosity, thermal conductivity, and specific heat at constant pressure, respectively. Thus, the classical BGK model predicts \(\mathrm{Pr}=1\), whereas the critical value \(\nu=-1/2\) gives \(\mathrm{Pr}=2/3\), the value conventionally adopted for dilute monatomic gases. Throughout this paper, we refer to $\mathrm{Pr}=2/3$ as the correct Prandtl number.

Introducing anisotropy into the relaxation Gaussian creates additional analytical difficulties. In the classical BGK model, the covariance matrix of the relaxation Maxwellian is $TI$, so estimates concerning the thermal structure reduce to controlling the scalar temperature \(T\). In the ES-BGK model, however, the variances depend on direction, and estimating the determinant and inverse of \(\mathcal T_\nu\) requires information about its eigenvalues. The identity $\operatorname{tr}\mathcal T_\nu=3T$ controls only the sum of the eigenvalues and, by itself, does not
yield a bound of the form $\mathcal T_\nu\ge cTI$ with a uniform constant $c>0$.

For the noncritical range \(-1/2<\nu<1\), this difficulty can be handled through the comparison \cite{yun2015classical,yun2015ellipsoidal}:
\begin{align*}
    \begin{split}
        c_\nu TI\le\mathcal T_\nu\le C_\nu TI,
\qquad
c_\nu=\min\{1-\nu,1+2\nu\}>0,
\qquad
C_\nu=\max\{1-\nu,1+2\nu\}.   
    \end{split}
\end{align*}
This reduces estimates for the anisotropic Gaussian to derives involving the scalar temperature. In the classical BGK setting, Perthame and Pulvirenti established Gaussian height, scalar second-moment, and mean-velocity estimates \cite{perthame1993weighted}
\begin{align*}
		\frac{\rho}{T^{3/2}},
		\qquad
		\rho\bigl(T+|U|^2\bigr)^{(q-3)/2},
		\qquad
		\frac{\rho|U|^{q+3}}
		{T^{3/2}\bigl(T+|U|^2\bigr)^{3/2}}.
	\end{align*}
Together with the tensor comparison above, these estimates yield weighted \(L^\infty\) bounds for the ES-BGK Maxwellian, with constants depending on \(\nu\).

At the critical point \(\nu=-1/2\), however, the comparison constant \(c_\nu\) vanishes. The temperature tensor takes the form
\begin{align*}
    \begin{split}
        \mathcal T_{-1/2}
=\frac12\bigl((\operatorname{tr}\Theta)I-\Theta\bigr).    
    \end{split}
\end{align*}
Although this tensor remains positive definite for every nonzero nonnegative \(L^1_2\) density, its smallest eigenvalue need not be comparable to \(T\). For example, if
\begin{align*}
    \begin{split}
        \Theta=\operatorname{diag}(1,\delta,\delta),
\qquad 0<\delta<1,   
    \end{split}
\end{align*}
then
\begin{align*}
    \begin{split}
        T=\frac{1+2\delta}{3},
\qquad
\mathcal T_{-1/2}
=\operatorname{diag}\left(\delta,\frac{1+\delta}{2},
\frac{1+\delta}{2}\right).    
    \end{split}
\end{align*}
Consequently, the smallest eigenvalue tends to zero as \(\delta\to0\), while the scalar temperature remains bounded away from zero. Thus, the classical temperature-comparison argument does not extend directly to this critical case.

This breakdown of temperature comparison arguments at $\nu=-1/2$ demands additional analytical estimates in the global construction of mild solutions. Motivated by this difficulty, we develop anisotropic velocity moment estimates that directly account for the geometry of the stress tensor. These estimates yield a linear weighted \(L^\infty\) bound for the critical relaxation Gaussian and provide the basis for constructing unique mild solutions under suitable weighted bounds and a free-transport non-vacuum condition. More precisely, the first three estimates in Proposition~\ref{prop:four-moment-estimates} are ellipsoidal counterparts of the Perthame--Pulvirenti estimates, controlling the Gaussian height scale, the scalar second moment, and the mean velocity. However, they do not directly provide the coupled control of the determinant and the largest principal variance required for the weighted Gaussian bound. In the isotropic case, the same scalar \(T\) determines both the Gaussian volume and its longest length scale. These quantities are distinct for a general stress tensor. Writing its eigenvalues as \(\theta_1\ge\theta_2\ge\theta_3>0\), we derive the additional directional estimate (see \eqref{item:B4} in Proposition~\ref{prop:four-moment-estimates})
\begin{align*}
    \begin{split}
        \frac{\rho\theta_1^{q/2}}{\sqrt{\det\Theta}}
\le C_q\|f\|_{L^\infty_q},
\qquad q>5.    
    \end{split}
\end{align*}
This controls the product of the natural height scale \(\rho/\sqrt{\det\Theta}\) and the \(q\)-th power of the longest principal length scale \(\sqrt{\theta_1}\). Finally, the relations
\begin{align*}
    \det\mathcal T_{-1/2}\ge\det\Theta,
\qquad
\lambda_{\max}(\mathcal T_{-1/2})\le\theta_1    
\end{align*}
allow us to combine the directional estimate with the determinant and mean-velocity estimates to obtain
\begin{align*}
    \operatorname*{ess\,sup}_{v\in\mathbb{R}^3}|(1+|v|^2)^{\frac{q}{2}}\mathcal M_{-1/2}[f](t,x,v)| \le C_q \operatorname*{ess\,sup}_{v\in\mathbb{R}^3}|(1+|v|^2)^{\frac{q}{2}}f(t,x,v)|, \qquad q>5,    
\end{align*}
with a constant independent of the eigenvalue ratios of \(\Theta\). This estimate replaces the use of scalar temperature comparison in controlling the weighted Maxwellian and is the main analytical ingredient in our treatment of the critical case.

\subsection{Main result} 
Before introducing our main result, we record the following notation which will be used throughout the paper.
	\begin{itemize}
		\item The angle bracket is denoted by
        \begin{align*}
            \begin{split}
                \langle a\rangle:=(1+|a|^2)^{1/2}.    
            \end{split}
        \end{align*}

         \item For functions depending on $(x,v)\in\mathbb{R}^3\times\mathbb{R}^3$, we define for $q,r\ge 0$,
        \begin{align*}
        \begin{split}
            \|h(t)\|_{L^{\infty}_q}&=\operatorname*{ess\,\sup}_{(x,v)\in\mathbb{R}^3\times\mathbb{R}^3} \langle v\rangle^q|h(t,x,v)|,\cr
            \|h(t)\|_{L^{\infty}_{r,q}}&=\operatorname*{ess\,\sup}_{(x,v)\in\mathbb{R}^3\times\mathbb{R}^3}\langle x\rangle^r \langle v\rangle^q|h(t,x,v)|,\cr
            \|h(t)\|_{L^{1}_{2}}&=\int_{\mathbb{R}^6}\langle v\rangle^2|h(t,x,v)|dxdv.
        \end{split}
    \end{align*}
	
		\item We use $\rho$, $U$, $T$, $\Theta$, $\mathcal{T}_{-1/2}$ and $\rho_f$, $U_f$, $T_f$, $\Theta_f$, $\mathcal{T}_{-1/2}[f]$ interchangeably to denote the macroscopic fields constructed from $f$. We omit the subscript $f$ when there is no risk of confusion.

        \item For matrices, $|A|$ denotes the Frobenius norm. For symmetric matrices
		$A$ and $B$, we write $A\leq B$ if $B-A$ is positive semidefinite. We denote the largest and smallest
		eigenvalues of a symmetric matrix $A$ by
        \begin{align*}
            \begin{split}
                \lambda_{\max}(A),
		\qquad
		\lambda_{\min}(A).   
            \end{split}
        \end{align*}
		
		\item The eigenvalues of $\Theta_f$ are ordered as
        \begin{align*}
            \begin{split}
                \theta_1\ge\theta_2\ge\theta_3>0.   
            \end{split}
        \end{align*}
		Thus, we have
        \begin{align*}
            \begin{split}
                \lambda_{\max}(\Theta_f)=\theta_1,
		\qquad
		\sqrt{\det\Theta_f}
		=
		\sqrt{\theta_1}\sqrt{\theta_2}\sqrt{\theta_3}.   
            \end{split}
        \end{align*}
		
		\item $C_{a,b,c,\cdots}$ means a constant which depends on $a$, $b$, $c$, $\cdots$. Moreover, these constants may take different values in each line. If no subscript is specified, like $C$, this represents a generic constant.
	\end{itemize}
    
    For the analysis at $\nu=-1/2$, we normalize the constant collision
    frequency to one by rescaling time and space, and henceforth consider
\begin{align}
    \partial_t f+v\cdot\nabla_x f
    =\mathcal M_{-1/2}[f]-f,
    \qquad f|_{t=0}=f_0.
    \label{eq:endpoint-esbgk-eq}
\end{align}

Now we are ready to state our main result.
\begin{definition}
    For $T_*>0$, we say $f\in C([0,T_*];L^1_2(\mathbb{R}^6))$ is a mild solution to \eqref{eq:endpoint-esbgk-eq} if $f$ satisfies 
    \begin{align*}
        \begin{split}
            f(t,x,v)&=e^{-t}f_0(x-vt,v)+\int^t_0e^{-(t-s)}\mathcal{M}_{-1/2}[f](s,x-v(t-s),v)ds.
        \end{split}
    \end{align*}
\end{definition}

\begin{theorem}
		\label{thm:endpoint-mild-solution}
		Let $r>3$, $q>5$, and $T_*>0$. Assume that $f_0\ge0$ satisfies
		\begin{align*}
			\|f_0\|_{L^\infty_{q+r}}+\|f_0\|_{L^{\infty}_{r,q}}<\infty,
			\qquad
			\|f_0\|_{L^1_2}<\infty, \qquad \int_{\mathbb{R}^6}f_0|\ln f_0|dxdv<\infty,
		\end{align*}
		and that there exists $\rho_0>0$ such that
		\begin{align}
			\int_{\mathbb{R}^3}f_0(x-vt,v)dv\ge \rho_0\langle x\rangle^{-r},
			\label{eq:free-transport-nonvacuum-endpoint}
		\end{align}
        for every $t\in[0,T_*]$ and almost every $x \in\mathbb{R}^3$. Then \eqref{eq:endpoint-esbgk-eq} admits a unique nonnegative mild solution
		\begin{align*}
			f\in C\bigl([0,T_*];L^1_2(\mathbb R^3\times\mathbb R^3)\bigr),
		\end{align*}
		satisfying
		\begin{align}
			\|f(t)\|_{L^\infty_{q+r}}
			\le
			e^{C_1t}\|f_0\|_{L^\infty_{q+r}},\quad \|f(t)\|_{L^\infty_{r,q}}\le C_2e^{C_3t}(\|f_0\|_{L^\infty_{r,q}}+\|f_0\|_{L^{\infty}_{q+r}}),
			\quad 0\le t\le T_*,
			\label{eq:endpoint-solution-Linfty-bound}
		\end{align}
        where $C_i\;(i=1,2,3)$ depends on $q$, $r$, $T_*$, and $f_0$, and
		\begin{align}
			\rho_f(t,x)
			\ge
			e^{-T_*}\rho_0\langle x\rangle^{-r},
			\qquad
			\text{for every }t\in[0,T_*]\text{ and almost every }x\in\mathbb{R}^3.
			\label{eq:endpoint-solution-density-lower}
		\end{align}
		Moreover, there exist constants $0<c_l<c_u$ depending on
		$q,r,T_*,\rho_0$ and $f_0$ such that
		\begin{align}
			c_lI
			\le
			\mathcal T_{-1/2}(t,x)
			\le
			c_uI,
			\qquad
			\text{for every }t\in[0,T_*]\text{ and almost every }x\in\mathbb{R}^3.
			\label{eq:endpoint-solution-ellipticity}
		\end{align}
		Finally, the conservation laws and the entropy dissipation hold:
		\begin{align}
			\frac{d}{dt}
			{\int_{\mathbb R^3}\int_{\mathbb R^3}}
			f(t,x,v)
			\begin{pmatrix}
				1\\ v\\ |v|^2
			\end{pmatrix}
			\,dv\,dx
			=0,\qquad 0\leq t\leq T_*,
			\label{eq:endpoint-conservation-laws}
		\end{align}
        and
        \begin{align}\label{eq:endpoint-entropy-dissipation}
            \begin{split}
                \int_{\mathbb{R}^6}f(t)\ln f(t)dxdv\leq \int_{\mathbb{R}^6}f_0\ln f_0dxdv,\qquad 0\leq t\leq T_*.
            \end{split}
        \end{align}
	\end{theorem}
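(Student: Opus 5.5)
We follow the classical Perthame--Pulvirenti scheme and replace scalar temperature comparison by the weighted Gaussian bound of Proposition~\ref{prop:four-moment-estimates}, namely $\|\mathcal M_{-1/2}[f]\|_{L^\infty_q}\le C_q\|f\|_{L^\infty_q}$ for $q>5$.

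The construction is an iteration $f^{n+1}=e^{-t}f_0(x-vt,v)+\int_0^te^{-(t-s)}\mathcal M_{-1/2}[f^n](s,x-v(t-s),v)\,ds$, started from $f^0=e^{-t}f_0(x-vt,v)$. Every iterate is nonnegative and satisfies $f^n\ge e^{-t}f_0(x-vt,v)$. Integrating in $v$ and using \eqref{eq:free-transport-nonvacuum-endpoint} gives $\rho_{f^n}\ge e^{-T_*}\rho_0\langle x\rangle^{-r}$, which is \eqref{eq:endpoint-solution-density-lower}.

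\textbf{Weighted bounds.} Since $\langle v\rangle^{q+r}$ is invariant along free transport, the Gaussian bound with exponent $q+r>5$ gives
\[
\|f^{n+1}(t)\|_{L^\infty_{q+r}}\le e^{-t}\|f_0\|_{L^\infty_{q+r}}+C\int_0^te^{-(t-s)}\|f^n(s)\|_{L^\infty_{q+r}}\,ds .
\]
By induction this yields the uniform bound $e^{C_1t}\|f_0\|_{L^\infty_{q+r}}$. For the mixed norm we use $\langle x\rangle^r\le C\langle x-v(t-s)\rangle^r\langle v\rangle^r\langle T_*\rangle^r$. This requires a spatial-weight version of the Maxwellian bound, $\langle x\rangle^r\langle v\rangle^q\mathcal M\le C(\|f\|_{L^\infty_{r,q}}+\|f\|_{L^\infty_{q+r}})$. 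We obtain it by applying the same pointwise-in-$x$ estimate, since the moment estimates are local in $x$ and homogeneous of degree one in $f$; the extra $\langle v\rangle^r$ factor is the reason the $L^\infty_{q+r}$ norm enters. Gronwall then gives the second inequality in \eqref{eq:endpoint-solution-Linfty-bound}.

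\textbf{Ellipticity, and why the upper bound is the main obstacle.} For the lower bound of \eqref{eq:endpoint-solution-ellipticity}, the density lower bound together with $\rho\le C\langle x\rangle^{-r}\|f\|_{L^\infty_{r,q}}$ and the height estimate $\rho/\sqrt{\det\Theta}\le C\|f\|_{L^\infty_q}$ bound $\det\Theta$ from below, since $\langle x\rangle^{-r}$ cancels. The upper bound is the main obstacle. It needs $\lambda_{\max}(\mathcal T_{-1/2})\le\theta_1\le C$, and this is exactly the role of the directional estimate \eqref{item:B4}: $\theta_1^{q/2}\le C\sqrt{\det\Theta}\,\|f\|_{L^\infty_q}/\rho$. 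We also bound $\det\Theta\le\theta_1^3$, which gives $\theta_1^{(q-3)/2}\le C\|f\|_{L^\infty_q}/\rho\le C\langle x\rangle^r$. The right side is not uniform in $x$, so this alone fails. To fix it we use the weighted moments $\rho\theta_1\le\int|v-U|^2f\,dv\le C\langle x\rangle^{-r}\|f\|_{L^\infty_{r,q}}$ (valid since $q>5$), combined with the density lower bound, which gives $\theta_1\le C$ uniformly. Then $\det\Theta\ge c$ and $\theta_j\le C$ give $\theta_3\ge c$, and hence $\mathcal T_{-1/2}=\tfrac12(\operatorname{tr}\Theta I-\Theta)\ge\tfrac12(\theta_2+\theta_3)I\ge cI$. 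The same bounds give $|U|\le C$.

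\textbf{Contraction, limit, and remaining properties.} With uniform ellipticity, upper bounds and density lower bounds along the iteration, the map $f\mapsto\mathcal M_{-1/2}[f]$ is Lipschitz in weighted $L^1$: $\|\mathcal M[f]-\mathcal M[g]\|_{L^1_2}\le C\|f-g\|_{L^1_2}$. We prove this by differentiating the Gaussian in $(\rho,U,\mathcal T)$ on a compact parameter set and controlling the moment differences by $\int\langle v\rangle^2|f-g|\,dv$, with the constant uniform in $x$. Free transport is an isometry on $L^1_2$, so
\[
\|f^{n+1}-f^n\|_{L^1_2}(t)\le C\int_0^t\|f^n-f^{n-1}\|_{L^1_2}(s)\,ds ,
\]
which gives a Cauchy sequence in $C([0,T_*];L^1_2)$. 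The limit is a mild solution and inherits all the bounds. Uniqueness follows from the same Gronwall argument within the class defined by \eqref{eq:endpoint-solution-Linfty-bound}. Conservation holds because $\mathcal M[f]$ shares the moments $1,v,|v|^2$ of $f$, using $\operatorname{tr}\mathcal T=3T$. Entropy dissipation follows from the known inequality $\int\mathcal M\ln\mathcal M\le\int f\ln f$ of \cite{andries2000gaussian}. We apply it in mild form via convexity along the Duhamel formula, and the $|\ln f_0|$ integrability together with the weighted bounds justify finiteness.
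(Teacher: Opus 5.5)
Your outline follows the paper's proof step by step:
\begin{itemize}
\item the same Picard iteration;
\item Lemma~\ref{lem:critical-ES-Maxwellian-PP-style} applied with exponent $q+r$;
\item the density lower bound inherited from $f^n\ge e^{-t}f_0(x-vt,v)$;
\item ellipticity from \eqref{item:B1} together with a second-moment upper bound;
\item an $x$-uniform $L^1_2$-Lipschitz estimate, leading to a factorial Cauchy bound and Gronwall uniqueness;
\item moment identities for conservation, and Jensen's inequality along the Duhamel formula for the entropy.
\end{itemize}
Your upper bound $|U|^2+\operatorname{tr}\Theta=\rho^{-1}\int|v|^2f\,dv\le C$ is a slightly more direct substitute for the paper's use of \eqref{item:B2}. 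The ``main obstacle'' discussion is misplaced, however. The input you need, both for the lower bound on $\det\Theta$ and in your \eqref{item:B4} computation, is the pointwise bound $\sup_v\langle v\rangle^q f(x,v)\le\langle x\rangle^{-r}\|f\|_{L^\infty_{r,q}}$, not the upper bound on $\rho$. With it, \eqref{item:B4} is already uniform in $x$. In the paper, \eqref{item:B4} enters only through the Gaussian bound, not the ellipticity argument.

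Two steps fail as written.

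First, your inequality $\langle x\rangle^r\le C\langle x-v(t-s)\rangle^r\langle v\rangle^r\langle T_*\rangle^r$ is multiplicative. It puts the weight $\langle y\rangle^r\langle v\rangle^{q+r}$, with $y=x-v(t-s)$, on $\mathcal M_{-1/2}[f^n](s,y,v)$. The linear Gaussian bound then requires $\|f^n\|_{L^\infty_{r,q+r}}$, which is neither assumed nor propagated. You need the additive form $\langle x\rangle^r\le C_r(1+T_*)^r\bigl(\langle x-v(t-s)\rangle^r+\langle v\rangle^r\bigr)$. It splits the weight into $\langle y\rangle^r\langle v\rangle^q$ and $\langle v\rangle^{q+r}$, which is exactly where the sum $\|f\|_{L^\infty_{r,q}}+\|f\|_{L^\infty_{q+r}}$ you display comes from.

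Second, you cannot integrate in $x$ in the Cauchy estimate without an $x$-uniform Lipschitz constant, and a ``compact parameter set'' does not supply one. In the whole space $\rho\sim\langle x\rangle^{-r}$, so the moment differences only satisfy
\[
|U_f-U_g|+|\Theta_f-\Theta_g|\le C\langle x\rangle^{r}\int\langle v\rangle^2|f-g|\,dv.
\]
Uniformity holds because the $U$- and $\mathcal T$-derivatives of the Gaussian have $L^1_2$ norm of order $\rho\lesssim\langle x\rangle^{-r}$, which cancels this growth. The paper implements exactly this cancellation. It applies Proposition~\ref{prop:critical-maxwellian-L12-lipschitz} to $\langle x\rangle^r f^n(s,x,\cdot)$, whose density is at least $e^{-T_*}\rho_0$ and whose $L^\infty_q$ norm is bounded uniformly in $x$. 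It then uses $\mathcal M_{-1/2}[ah]=a\,\mathcal M_{-1/2}[h]$ to cancel the factor $\langle x\rangle^r$.

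With these two repairs your argument coincides with the paper's.
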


    \begin{remark}
        Theorem~\ref{thm:endpoint-mild-solution} also holds on the spatial torus $x\in\mathbb{T}^3$, with the spatial weights omitted.    
    \end{remark}

   \subsection{Literature review}
   We briefly review the mathematical results on the BGK and ES-BGK models. For the classical BGK model, global existence of weak solutions under finite mass, energy, and entropy assumptions was proved in \cite{perthame1989global}. Weighted $L^{\infty}$ estimates and uniqueness were obtained in \cite{perthame1993weighted}, and the corresponding whole-space theory was developed in \cite{mischler1996uniqueness}. Extensions to $L^p$ spaces were investigated in \cite{zhang2007lp}. Stationary solutions on a finite interval with large inflow boundary data were constructed in \cite{ukai1992stationary}, while global existence and asymptotic behavior near a global Maxwellian were studied in \cite{bellomo1985cauchy,yun2010cauchy}. Extensions of BGK models have also been studied, including external confining potentials \cite{bosi2009bgk}, Fokker--Planck dynamics \cite{zhang2008global}, and Vlasov--Poisson coupling \cite{zhang2010cauchy}. The various results of the hydrodynamic limit \cite{berthelin2005kinetic,saint2002discrete,saint2002modele,saint2003bgk} and diffusion limit \cite{degond2000diffusion,dolbeault2007non,goudon2001approximation,mellet2010fractional,mellet2011fractional} were investigated. Motivated by the computational efficiency of the BGK model, numerous numerical methods have been developed and analyzed, for example, \cite{issautier1996convergence,mieussens2001,pieraccini2007implicit,russo2009semilagrangian,russo2012convergence,struchtrup1997bgk}.
   
    We now turn to the results concerning the ES-BGK model in the noncritical range of the Prandtl parameter, $-1/2<\nu<1$. The entropy dissipation was established in \cite{andries2000gaussian}, and an alternative derivation of the model together with another proof of the $H$-theorem was presented in \cite{brull2008new}. Quantitative entropy production estimates were obtained in \cite{yun2016entropy}. For fixed collision frequency, global classical solutions in weighted $L^{\infty}$ Sobolev spaces were constructed in \cite{yun2015classical}, and global weak solutions in the whole space were obtained in \cite{park2016cauchy}. Classical solutions near equilibrium were studied in the periodic and whole-space settings in \cite{yun2015ellipsoidal,hwang2019ellipsoidal}, respectively. Global mild solutions for initial data allowing large spatial oscillations were constructed in \cite{duan2017global}. In \cite{bang2016stationary}, stationary solutions in a slab with inflow boundary conditions were studied.

    At the critical value $\nu=-1/2$, which leads to the correct Prandtl number, the lower bound comparison between the temperature tensor and the scalar temperature degenerates, requiring an additional argument to control the anisotropic Gaussian. An entropy production estimate covering the full range $-1/2\leq \nu<1$ was obtained in \cite{kim2022entropy}, and the existence of weak solutions to the Cauchy problem at the critical value was proven in \cite{son2022cauchy}. In \cite{brull2024stationary}, a unique stationary solution in a slab with mixed boundary conditions was constructed. The local well-posedness of the mild solutions was established in \cite{son2025local}. Numerical simulation and analysis for the ES-BGK model were developed in \cite{boscarino2026asymptotic,filbet2011asymptotic,gallis2011investigation,mieussens2004numerical,meng2013assessment,russo2018convergence,zheng2005ellipsoidal}. The present work develops anisotropic velocity moment estimates yielding a linear weighted $L^\infty$ bound for the critical ES-BGK Gaussian. Together with weighted $L^1$ Lipschitz continuity, these estimates allow the construction of unique mild solutions under a free-transport non-vacuum condition without the additional momentum smallness assumption.
    \newline
    \newline

The paper is organized as follows. In Section~\ref{sec:moment-estimates}, we derive ellipsoidal velocity moment estimates and obtain the weighted $L^\infty$ bound for the critical ES-BGK Maxwellian. Section~\ref{sec:ellipticity-lipschitz} establishes uniform ellipticity of the temperature tensor and the $L^1_2$-Lipschitz continuity of the Maxwellian under suitable bounds on the distribution function. In Section~\ref{sec:endpoint-mild-solutions}, we use these estimates to construct a unique mild solution under a free-transport non-vacuum condition and prove conservation laws and entropy dissipation. Finally, Section~\ref{sec:endpoint-weak-convergence} uses these mild solutions as approximations to construct global weak solutions through uniform moment and entropy bounds, velocity averaging, and identification of the nonlinear limit.
	
	\section{Ellipsoidal moment estimates}
	\label{sec:moment-estimates}
    In this section, we establish the weighted $L^{\infty}$ bound for the critical ES-BGK Gaussian for $q>5$. We first verify positive definiteness of the stress tensor and derive four ellipsoidal moment estimates, which we then combine with an elementary anisotropic Gaussian estimate. Throughout this section, all estimates are pointwise in $(t,x)$, which we suppress from the notation. Accordingly, we use the velocity-only weighted norms:
    \begin{align}\label{eq:norm-restric-v}
        \begin{split}
            \|f\|_{L^\infty_q}
		:=
		\operatorname*{ess\,sup}_{v\in\mathbb R^3}
		\langle v\rangle^q |f(v)|, \qquad \|f\|_{L^1_2}:=\int_{\mathbb R^3}
		(1+|v|^2)|f(v)|\,dv.    
        \end{split}
    \end{align}
    
    \begin{lemma}[Strict positive definiteness of the stress tensor]\label{lem:theta-positive-definite}
    Let $f\in L^1_2(\mathbb R^3)$ satisfy $f\ge0$ and
    \begin{align*}
    \rho:=\int_{\mathbb R^3}f(v)\,dv>0.
	\end{align*}
	Then the stress tensor
	\begin{align*}
			\Theta
			=
			\frac1\rho
			\int_{\mathbb R^3}(v-U)\otimes(v-U)f(v)\,dv
		\end{align*}
		is symmetric positive definite.
	\end{lemma}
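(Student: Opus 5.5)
We first check that $U$ and $\Theta$ are well defined. Since $f\in L^1_2$ and $\rho>0$, the vector $U=\rho^{-1}\int vf\,dv$ is finite. Each entry of $(v-U)\otimes(v-U)f$ is bounded by $|v-U|^2f\le 2(|v|^2+|U|^2)f$, which is integrable. Hence $\Theta$ is a well-defined real $3\times3$ matrix. Symmetry is immediate from $(v-U)\otimes(v-U)=\bigl((v-U)\otimes(v-U)\bigr)^T$.

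For definiteness we test against a fixed $\xi\in\mathbb R^3\setminus\{0\}$. Linearity of the integral gives
\begin{align*}
\xi^T\Theta\xi=\frac1\rho\int_{\mathbb R^3}\bigl(\xi\cdot(v-U)\bigr)^2 f(v)\,dv\ge0,
\end{align*}
because $f\ge0$. This already shows positive semidefiniteness.

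It remains to exclude $\xi^T\Theta\xi=0$, which is the only genuine point of the argument. If the integral vanished, then, since the integrand is nonnegative, $\bigl(\xi\cdot(v-U)\bigr)^2f(v)=0$ for almost every $v$. Thus $f=0$ almost everywhere outside the affine plane
\begin{align*}
P_\xi=\{v:\ \xi\cdot(v-U)=0\}.
\end{align*}
This plane has Lebesgue measure zero in $\mathbb R^3$, so $f=0$ almost everywhere on all of $\mathbb R^3$. Then $\rho=\int f\,dv=0$, contradicting $\rho>0$. Hence $\xi^T\Theta\xi>0$ for every $\xi\neq0$, and $\Theta$ is symmetric positive definite. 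In particular $\theta_3>0$.

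The key observation is that $f$ is an $L^1$ density rather than a measure, so it cannot concentrate on a plane. No quantitative lower bound on $\theta_3$ is claimed here; such bounds are the subject of the later ellipticity estimates.
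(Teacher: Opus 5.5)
Your proposal is correct and follows essentially the same argument as the paper: symmetry, positive semidefiniteness from the quadratic form, and a contradiction because $f$ would vanish almost everywhere outside the null affine plane $P_\xi$, forcing $\rho=0$. Your preliminary check that $U$ and $\Theta$ are well defined for $f\in L^1_2$ is a small addition the paper leaves implicit.
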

	
	\begin{proof}
		Symmetry follows immediately from the definition.  For every
		$\xi\in\mathbb R^3$,
		\begin{align*}
			\rho\,\xi^T\Theta\xi
			=
			\int_{\mathbb R^3}\bigl((v-U)\cdot\xi\bigr)^2f(v)\,dv
			\ge0,
		\end{align*}
		so $\Theta$ is positive semi-definite.\\
        \indent Suppose that $\Theta$ is not positive definite. Then, there exists
		$\xi\in\mathbb R^3\setminus\{0\}$ such that
		\begin{align*}
			\int_{\mathbb R^3}\bigl((v-U)\cdot\xi\bigr)^2f(v)\,dv=0.
		\end{align*}
		The integrand is nonnegative, and hence
		\begin{align*}
			\bigl((v-U)\cdot\xi\bigr)^2f(v)=0,
			\quad\text{for almost every }v\in\mathbb R^3.
		\end{align*}
		It follows that $f(v)=0$ for almost every $v$ outside the affine plane
		\begin{align*}
			P_\xi:=\{v\in\mathbb R^3:(v-U)\cdot\xi=0\}.
		\end{align*}
		Since $P_\xi$ has three-dimensional Lebesgue measure zero and $f$ is an
		$L^1$ function,
		\begin{align*}
			\rho
			=
			\int_{\mathbb R^3}f(v)\,dv
			=
			\int_{P_\xi}f(v)\,dv
			=0,
		\end{align*}
		contradicting the assumption $\rho>0$.  Thus $\Theta$ is positive definite.
	\end{proof}
	
    The following proposition provides the ellipsoidal moment estimates needed for the weighted Gaussian bound. In particular, the directional estimate \ref{item:B4} complements the scalar second-moment estimate \ref{item:B2} by coupling the largest principal variance with the determinant of the stress tensor; its proof adapts the Perthame--Pulvirenti splitting argument \cite{perthame1993weighted} to the longest principal direction.
	\begin{proposition}[Four ellipsoidal moment estimates]
		\label{prop:four-moment-estimates}
		Let $f\in L^1_2(\mathbb R^3)$ satisfy $f\ge0$, and $\rho>0$. Then the
		following estimates hold.
		\begin{enumerate}[
			label=\textup{(B\arabic*)},
			ref=\textup{B\arabic*},
			leftmargin=*,
			itemsep=1.1ex
			]
			\item\label{item:B1}
			If $\|f\|_{L^\infty}<\infty$, then
			\begin{align*}
				\frac{\rho}{\sqrt{\det\Theta}}
				\le
				C\|f\|_{L^\infty}.
			\end{align*}
			
			\item\label{item:B2}
			If $q>5$ and $\|f\|_{L^\infty_q}<\infty$, then
			\begin{align*}
				\rho\bigl(|U|^2+\operatorname{tr}\Theta\bigr)^{(q-3)/2}
				\le
				C_q\|f\|_{L^\infty_q}.
			\end{align*}
			
			\item\label{item:B3}
			If $q>1$ and $\|f\|_{L^\infty_q}<\infty$, then
			\begin{align*}
				\frac{\rho|U|^{q+3}}
				{\sqrt{\det\Theta}\bigl(\theta_1+|U|^2\bigr)^{3/2}}
				\le
				C_q\|f\|_{L^\infty_q}.
			\end{align*}
			
			\item\label{item:B4}
			If $q>5$ and $\|f\|_{L^\infty_q}<\infty$, then
			\begin{align*}
				\frac{\rho\theta_1^{q/2}}{\sqrt{\det\Theta}}
				\le
				C_q\|f\|_{L^\infty_q}.
			\end{align*}
		\end{enumerate}
	\end{proposition}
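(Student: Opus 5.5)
Throughout we work in the principal frame of $\Theta$: after a rotation (which preserves $\|f\|_{L^\infty_q}$ only up to the shift by $U$, handled by $\langle v\rangle\ge$ const$\cdot\langle v-U\rangle/\langle U\rangle$ where needed) we write $w=v-U=(w_1,w_2,w_3)$ with $\rho\theta_i=\int w_i^2 f\,dv$. All four estimates follow the Perthame--Pulvirenti pattern: split the defining moment integral into a bounded region where $f$ is bounded by $\|f\|_{L^\infty}$ (or the weighted norm) and its complement, where the second moment controls the mass, and then optimize over the splitting parameter.

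\textbf{(B1).} We use the ellipsoid $E_R=\{w:\sum_i w_i^2/\theta_i\le R^2\}$, of volume $\frac{4\pi}{3}R^3\sqrt{\det\Theta}$. By Chebyshev, $\int_{E_R^c}f\le R^{-2}\int\sum_i w_i^2\theta_i^{-1}f=3\rho R^{-2}$. Taking $R=\sqrt6$ gives $\rho/2\le\int_{E_R}f\le \|f\|_{L^\infty}|E_R|$, which is (B1).

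\textbf{(B2).} This is the scalar Perthame--Pulvirenti estimate, since $\rho(|U|^2+\operatorname{tr}\Theta)=\int|v|^2f$. We split at $|v|=R$:
\[
\int|v|^2f\le \tfrac{4\pi}{5}R^5\|f\|_{L^\infty}+\|f\|_{L^\infty_q}\int_{|v|>R}|v|^{2-q}\,dv\lesssim R^5\|f\|_{L^\infty_q}+R^{5-q}\|f\|_{L^\infty_q},
\]
and similarly $\rho\lesssim R^3\|f\|_{L^\infty_q}+\rho R^{-2}(|U|^2+\operatorname{tr}\Theta)$. Choosing $R^2\sim|U|^2+\operatorname{tr}\Theta$ in the mass inequality (where the second term absorbs half of $\rho$) and combining with the second-moment bound yields (B2), using $q>5$ for integrability.

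\textbf{(B3).} We may assume $|U|^2\ge C\theta_1$; otherwise $|U|^{q+3}\lesssim|U|^q\theta_1^{3/2}$ and the claim follows from (B1) together with (B4)-type control. The main case $|U|^2\gg\theta_1$ follows from (B1) and Chebyshev: most of the mass lies in $E_{\sqrt6}$, and every $v$ in this set satisfies $|v|\ge|U|-\sqrt{6\theta_1}\ge|U|/2$. Hence
\[
\rho/2\le\int_{E}f\le\|f\|_{L^\infty_q}\,2^q|U|^{-q}\,|E|\lesssim\|f\|_{L^\infty_q}|U|^{-q}\sqrt{\det\Theta},
\]
which dominates the stated quantity because $|U|^3/(\theta_1+|U|^2)^{3/2}\le1$. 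The small-$|U|$ case is absorbed in the same way, since then the ratio is bounded and $|U|^q\lesssim\theta_1^{q/2}$.

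\textbf{(B4), the main obstacle.} Here we split along the longest principal direction $e_1$. In the slab $S_R=\{|w_1|\le R\sqrt{\theta_1}\}$, the transverse slices are ellipses with semi-axes $\sim\sqrt{\theta_2},\sqrt{\theta_3}$ after a Chebyshev cut in $(w_2,w_3)$. Let $K=\{|w_1|\le R\sqrt{\theta_1},\ w_2^2/\theta_2+w_3^2/\theta_3\le R^2\}$, of volume $\sim R^3\sqrt{\det\Theta}$; Chebyshev again puts mass $\ge\rho/2$ in $K$. The difficulty is extracting the factor $\theta_1^{q/2}$. To do so we use the weight only on the part of $K$ where $|w_1|\ge\epsilon\sqrt{\theta_1}$, so that $|v|\ge\epsilon\sqrt{\theta_1}-|U\cdot e_1|$. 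We then show, via the second moment along $e_1$, that this part carries a definite fraction of $\rho$. Indeed, $\int_{|w_1|<\epsilon\sqrt{\theta_1}}w_1^2f\le\epsilon^2\theta_1\rho$, while the contribution from $|w_1|>R\sqrt{\theta_1}$ is bounded, using $\langle v\rangle^{-q}$ decay and $q>5$, by a small multiple of $\rho\theta_1$ once $R$ is large. We compare this against (B2) so that the tail term is handled by $\|f\|_{L^\infty_q}$.

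The shift by $U$ must also be handled. If $|U|\gtrsim\sqrt{\theta_1}$, we apply the argument of (B3) with $|v|\gtrsim|U|$ on $E$. Otherwise $|U|\le\epsilon\sqrt{\theta_1}/2$, and on the annular slab the bound $f\le\|f\|_{L^\infty_q}(\epsilon\sqrt{\theta_1}/2)^{-q}$ with volume $\lesssim\sqrt{\det\Theta}$ gives (B4). I expect the careful balancing of the $e_1$-tail against $q>5$, in such a way that the constants are independent of $\theta_2/\theta_1$, to be the delicate step.
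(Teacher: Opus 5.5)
Your (B1) is correct, but your argument for (B4), the heart of the proposition, has a genuine gap. You claim that the part $\{|w_1|>R\sqrt{\theta_1}\}$ contributes only a small multiple of $\rho\theta_1$ once $R$ is large. This is false, and with it falls the claim that the bounded slab $\{\epsilon\sqrt{\theta_1}\le|w_1|\le R\sqrt{\theta_1}\}$ carries a definite fraction of $\rho$.

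For a counterexample, take a blob of mass $\approx\rho$ and radius $\tau$ at the origin, plus two small blobs of total mass $m\le\rho/(4R^2)$ at $\pm De_1$, with $\tau<\epsilon D\sqrt{m/\rho}$. Then $U=0$ and $\sqrt{\theta_1}\approx D\sqrt{m/\rho}$. The central blob lies in $\{|w_1|<\epsilon\sqrt{\theta_1}\}$, the outer blobs lie beyond $R\sqrt{\theta_1}$, and the slab is empty.

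The decay $f\le\|f\|_{L^\infty_q}\langle v\rangle^{-q}$, integrated over whole transverse planes, only gives a tail bound $\lesssim\|f\|_{L^\infty_q}(R\sqrt{\theta_1})^{5-q}$. When the tail dominates, this yields $\rho\theta_1^{(q-3)/2}\lesssim\|f\|_{L^\infty_q}$. That is just (B2), and it falls short of (B4) by the unbounded factor $\theta_1/\sqrt{\theta_2\theta_3}$. ``Comparing against (B2)'' cannot make the tail small either, since (B2) is an upper bound. No mass count in a fixed set of volume $\sim\sqrt{\det\Theta}$ can work.

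The missing idea is to weight by $|v\cdot e_1|^2$ on the whole tail $\{|v\cdot e_1|\ge\sqrt{\theta_1}/2\}$, and to let the transverse cut grow with $s=v\cdot e_1$.
\begin{itemize}
\item The tail carries at least $\tfrac34\rho\theta_1$, because $\int|v\cdot e_1|^2f=\rho((U\cdot e_1)^2+\theta_1)$.
\item On the cone $\{Q_\perp\le Ks^2\}$ the slices have area $\pi\sqrt{\theta_2\theta_3}Ks^2$. The bound $f\le\|f\|_{L^\infty_q}|s|^{-q}$ then gives $C_q\|f\|_{L^\infty_q}\sqrt{\theta_2\theta_3}K\theta_1^{(5-q)/2}$; this is exactly where $q>5$ enters.
\item Off the cone, $s^2<Q_\perp/K$, and $\int Q_\perp f=2\rho$ gives the bound $2\rho/K$.
\item Optimizing in $K$ yields (B4).
\end{itemize}
Using $v\cdot e_1$ rather than $w_1$ also makes your case split in $|U|$ unnecessary. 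As written, its two branches ($|U|\ge2\sqrt{6\theta_1}$ for the (B3) argument, $|U|\le\epsilon\sqrt{\theta_1}/2$ for the slab) leave an intermediate range uncovered.

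The other estimates need repair too. In (B2), set $E=|U|^2+\operatorname{tr}\Theta$. Your second-moment inequality gives at best $\rho E\lesssim\|f\|_{L^\infty_q}$ (take $R\sim1$). Your mass inequality gives $\rho\lesssim E^{3/2}\|f\|_{L^\infty_q}$, which is a lower bound on $E$. Both hold for $\rho\sim\|f\|_{L^\infty_q}/E$ with $E\to\infty$, which violates (B2) since $q>5$. The Perthame--Pulvirenti argument instead bounds the inner region by $\rho R^2$ rather than $\|f\|_{L^\infty}R^5$, and takes $R^2=E/2$. This forces $\rho E/2\le C_q\|f\|_{L^\infty_q}E^{(5-q)/2}$.

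In (B3), your large-$|U|$ argument is correct, but the small-$|U|$ case is reduced to (B4), which needs $q>5$. In fact (B4) fails for $3<q<5$, e.g.\ for $f=N\langle v\rangle^{-q}\mathbf{1}_{\{|v|\le R\}}$ with $R\to\infty$. So you do not get (B3) for $1<q\le5$. The paper avoids any case split: it bounds $\rho|U|\le\int|v|f$, uses H\"older on $E_L$ and Cauchy--Schwarz against $\int Qf=3\rho$ on $E_L^c$, and balances $L$. That works for every $q>1$.
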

	
	\begin{proof}
        Let $\{e_1,e_2,e_3 \}$ be an orthonormal eigenbasis of $\Theta$, with corresponding eigenvalues $\theta_1\ge\theta_2\ge\theta_3>0$. That is, 
        \begin{align*}
            \begin{split}
                \Theta e_i=\theta_i e_i,\qquad i=1,2,3.
            \end{split}
        \end{align*}
		By the definition of the stress tensor,
		\begin{align*}
			\rho\Theta
			=
			\int_{\mathbb R^3}
			(v-U)\otimes(v-U)f(v)\,dv.
		\end{align*}
		Therefore, for each $i=1,2,3$,
		\begin{align*}
			\rho\theta_i
			&=
			\rho\,e_i^T\Theta e_i
			\\
			&=
			e_i^T
			\left(
			\int_{\mathbb R^3}
			(v-U)\otimes(v-U)f(v)\,dv
			\right)e_i
			\\
			&=
			\int_{\mathbb R^3}
			|(v-U)\cdot e_i|^2f(v)\,dv.
		\end{align*}
        Define the normalized quadratic form
		\begin{align*}
			Q(v)
			:=
			\sum_{i=1}^3\frac{|(v-U)\cdot e_i|^2}{\theta_i}
			=
			(v-U)^T\Theta^{-1}(v-U).
		\end{align*}        
        Then, we have
		\begin{align}
			\int_{\mathbb R^3}Q(v)f(v)\,dv
			=
			\sum_{i=1}^3
			\frac1{\theta_i}
			\int_{\mathbb R^3}
			|(v-U)\cdot e_i|^2f(v)\,dv
			=
			\sum_{i=1}^3
			\frac{\rho\theta_i}{\theta_i}
			=
			3\rho.
			\label{eq:Q-moment}
		\end{align}
		
		\bigskip
		
		\noindent
		\textbf{Proof of \ref{item:B1}.}
		For $L>0$, set
		\begin{align*}
			E_L:=\{v\in\mathbb R^3:Q(v)\le L^2\}.
		\end{align*}
		Under the change of variables
		\begin{align*}
			y_i=\frac{(v-U)\cdot e_i}{\sqrt{\theta_i}},\qquad i=1,2,3,
		\end{align*}
		we have $dv=\sqrt{\det\Theta}\,dy$ and $E_L$ is mapped onto the Euclidean
		ball of radius $L$.  Thus, with
		\begin{align*}
			\omega_3:=|\{y\in\mathbb R^3:|y|\le1\}|,
		\end{align*}
		we have
		\begin{align}
			|E_L|
			=
			\omega_3\sqrt{\det\Theta}\,L^3.
			\label{eq:ellipsoid-volume}
		\end{align}
		We split the mass into $E_L$ and its complement.  Since $Q(v)>L^2$ on
		$E_L^c$,
		\begin{align*}
			\rho
			&=
			\int_{E_L}f(v)\,dv
			+
			\int_{E_L^c}f(v)\,dv
			\\
			&\le
			\|f\|_{L^\infty}|E_L|
			+
			\frac1{L^2}\int_{E_L^c}Q(v)f(v)\,dv
			\\
			&\le
			\omega_3\sqrt{\det\Theta}\,L^3\|f\|_{L^\infty}
			+
			\frac{3\rho}{L^2},
		\end{align*}
		where we used \eqref{eq:Q-moment}.  Because $\rho>0$, one necessarily has
		$\|f\|_{L^\infty}>0$.  We choose $L>0$ so that the two terms on the
		right-hand side are equal:
		\begin{align*}
			\omega_3\sqrt{\det\Theta}\,L^3\|f\|_{L^\infty}
			=
			\frac{3\rho}{L^2}.
		\end{align*}
		Equivalently,
		\begin{align*}
			L^5
			=
			\frac{3\rho}
			{\omega_3\sqrt{\det\Theta}\|f\|_{L^\infty}}.
		\end{align*}
		For this choice,
		\begin{align*}
			\rho
			&\le
			2\omega_3\sqrt{\det\Theta}\,L^3\|f\|_{L^\infty}
			\\
			&=
			2\omega_3\sqrt{\det\Theta}\|f\|_{L^\infty}
			\left(
			\frac{3\rho}
			{\omega_3\sqrt{\det\Theta}\|f\|_{L^\infty}}
			\right)^{3/5}
			\\
			&=
			2\cdot3^{3/5}\rho^{3/5}
			\left(
			\omega_3\sqrt{\det\Theta}\|f\|_{L^\infty}
			\right)^{2/5}.
		\end{align*}
		Dividing by $\rho^{3/5}$ and raising the resulting inequality to the power
		$5/2$ gives
		\begin{align*}
			\rho
			\le
			C\sqrt{\det\Theta}\|f\|_{L^\infty}.
		\end{align*}
		
		\medskip
		\noindent
		\textbf{Proof of \ref{item:B2}}. From the definitions of $U$ and
		$\Theta$, we have
		\begin{align}
			\int_{\mathbb R^3}|v|^2f(v)\,dv
			=
			\rho\bigl(|U|^2+\operatorname{tr}\Theta\bigr).
			\label{eq:raw-second-moment}
		\end{align}
		Fix $R>0$.  Splitting the second moment at the Euclidean radius $R$ gives
		\begin{align*}
			\rho\bigl(|U|^2+\operatorname{tr}\Theta\bigr)
			&=
			\int_{|v|\le R}|v|^2f(v)\,dv
			+
			\int_{|v|>R}|v|^2f(v)\,dv
			\\
			&\le
			\rho R^2
			+
			\|f\|_{L^\infty_q}
			\int_{|v|>R}|v|^{2-q}\,dv.
		\end{align*}
		Since $q>5$,
		\begin{align*}
			\int_{|v|>R}|v|^{2-q}\,dv
			=
			4\pi\int_R^\infty r^{4-q}\,dr
			=
			\frac{4\pi}{q-5}R^{5-q}.
		\end{align*}
		Therefore,
		\begin{align}
			\rho\bigl(|U|^2+\operatorname{tr}\Theta\bigr)
			\le
			\rho R^2
			+
			\frac{4\pi}{q-5}\|f\|_{L^\infty_q}R^{5-q}.
			\label{eq:B2-before-balance}
		\end{align}
		We choose $R>0$ so that the two terms on the right-hand side are equal:
		\begin{align*}
			\rho R^2
			=
			\frac{4\pi}{q-5}\|f\|_{L^\infty_q}R^{5-q}.
		\end{align*}
		Equivalently,
		\begin{align*}
			R^{q-3}
			=
			\frac{4\pi}{q-5}
			\frac{\|f\|_{L^\infty_q}}{\rho}.
		\end{align*}
		Substitution into \eqref{eq:B2-before-balance} yields
		\begin{align*}
			\rho\bigl(|U|^2+\operatorname{tr}\Theta\bigr)\le
			2\rho R^2\le
			C_q
			\rho^{1-2/(q-3)}
			\|f\|_{L^\infty_q}^{2/(q-3)}.
		\end{align*}
		After division by $\rho>0$,
		\begin{align*}
			|U|^2+\operatorname{tr}\Theta
			\le
			C_q
			\left(
			\frac{\|f\|_{L^\infty_q}}{\rho}
			\right)^{2/(q-3)}.
		\end{align*}
		Raising both sides to the power $(q-3)/2$ and multiplying by $\rho$ yields the desired result.
        
		\medskip
		\noindent
		\textbf{Proof of \ref{item:B3}.}
		Let $q>1$. First, we see
		\begin{align*}
			\rho|U|
			=
			\left|\int_{\mathbb R^3}v f(v)\,dv\right|
			\le
			\int_{\mathbb R^3}|v|f(v)\,dv.
		\end{align*}
		We split the last integral over $E_L$ and $E_L^c$.\\
        $\bullet$ Estimate on $E_L:$ H\"older's inequality gives
		\begin{align*}
			\int_{E_L}|v|f(v)\,dv
			\le
			\left(\int_{E_L}f(v)\,dv\right)^{1-1/q}
			\left(\int_{E_L}|v|^qf(v)\,dv\right)^{1/q}
			\le
			\rho^{1-1/q}
			\left(\|f\|_{L^\infty_q}|E_L|\right)^{1/q}.
		\end{align*}
		Here we used
		\begin{align*}
			|v|^qf(v)
			\le
			\langle v\rangle^qf(v)
			\le
			\|f\|_{L^\infty_q}.
		\end{align*}
		By \eqref{eq:ellipsoid-volume},
		\begin{align}
			\int_{E_L}|v|f(v)\,dv
			\le
			\omega_3^{1/q}
			\rho^{1-1/q}
			\bigl(\|f\|_{L^\infty_q}\sqrt{\det\Theta}\bigr)^{1/q}
			L^{3/q}.
			\label{eq:B3-near}
		\end{align}
		$\bullet$ Estimate on $E^c_L:$ we utilize $Q(v)^{1/2}>L$ and the Cauchy--Schwarz inequality to obtain
		\begin{align*}
			\int_{E_L^c}|v|f(v)\,dv\le
			\frac1L
			\int_{\mathbb R^3}Q(v)^{1/2}|v|f(v)\,dv\le
			\frac1L
			\left(\int_{\mathbb R^3}Q(v)f(v)\,dv\right)^{1/2}
			\left(\int_{\mathbb R^3}|v|^2f(v)\,dv\right)^{1/2}.
		\end{align*}
		The first factor is $(3\rho)^{1/2}$ by \eqref{eq:Q-moment}.  By
		\eqref{eq:raw-second-moment} and
		$\theta_1\ge\theta_2\ge\theta_3$,
		\begin{align*}
			\int_{\mathbb R^3}|v|^2f(v)\,dv
			=
			\rho\bigl(|U|^2+\theta_1+\theta_2+\theta_3\bigr)
			\le
			3\rho\bigl(|U|^2+\theta_1\bigr).
		\end{align*}
		Therefore,
		\begin{align}
			\int_{E_L^c}|v|f(v)\,dv
			\le
			\frac{3\rho}{L}\bigl(\theta_1+|U|^2\bigr)^{1/2}.
			\label{eq:B3-far}
		\end{align}
		Combining \eqref{eq:B3-near} and \eqref{eq:B3-far}, we have
		\begin{align}
			\rho|U|
			\le
			\;\omega_3^{1/q}
			\rho^{1-1/q}
			\bigl(\|f\|_{L^\infty_q}\sqrt{\det\Theta}\bigr)^{1/q}
			L^{3/q}
			+
			\frac{3\rho}{L}\bigl(\theta_1+|U|^2\bigr)^{1/2}.
			\label{eq:B3-before-balance}
		\end{align}
		We choose $L>0$ so that the two terms on the right-hand side are equal:
		\begin{align*}
			\omega_3^{1/q}
			\rho^{1-1/q}
			\bigl(\|f\|_{L^\infty_q}\sqrt{\det\Theta}\bigr)^{1/q}
			L^{3/q}=
			\frac{3\rho}{L}\bigl(\theta_1+|U|^2\bigr)^{1/2}.
		\end{align*}
		Equivalently,
		\begin{align*}
			L^{(q+3)/q}
			=
			\frac{3\rho^{1/q}\bigl(\theta_1+|U|^2\bigr)^{1/2}}
			{\omega_3^{1/q}
				\bigl(\|f\|_{L^\infty_q}\sqrt{\det\Theta}\bigr)^{1/q}}.
		\end{align*}
		Using this choice in \eqref{eq:B3-before-balance}, we obtain
		\begin{align*}
			\rho|U|
			\le
			C_q
			\rho^{(q+2)/(q+3)}
			\bigl(\|f\|_{L^\infty_q}\sqrt{\det\Theta}\bigr)^{1/(q+3)}
			\bigl(\theta_1+|U|^2\bigr)^{3/(2(q+3))}.
		\end{align*}
		Raising this inequality to the power $q+3$ gives
		\begin{align*}
			\rho^{q+3}|U|^{q+3}
			\le
			C_q
			\rho^{q+2}
			\|f\|_{L^\infty_q}\sqrt{\det\Theta}
			\bigl(\theta_1+|U|^2\bigr)^{3/2}.
		\end{align*}
		Dividing by
		$\rho^{q+2}\sqrt{\det\Theta}(\theta_1+|U|^2)^{3/2}$ proves the desired result.
		
		\medskip
		
		\noindent
		\textbf{Proof of \ref{item:B4}.}
		Let $q>5$ and assume that $\|f\|_{L^\infty_q}<\infty$.
		We shall estimate the second moment in the longest direction $e_1$. We observe
		\begin{align*}
			\int_{\mathbb R^3}|v\cdot e_1|^2f(v)\,dv
			&=
			\int_{\mathbb R^3}
			\left|(v-U)\cdot e_1+U\cdot e_1\right|^2f(v)\,dv
			\\
			&=
			\int_{\mathbb R^3}
			\bigl((v-U)\cdot e_1\bigr)^2f(v)\,dv
			+
			\rho (U\cdot e_1)^2,
		\end{align*}
		where we used the fact that the cross term vanishes because
		\begin{align*}
			\int_{\mathbb R^3}(v-U)f(v)\,dv=0.
		\end{align*}
		Moreover, by the definition of $\Theta$,
		\begin{align*}
			\int_{\mathbb R^3}
			\bigl((v-U)\cdot e_1\bigr)^2f(v)\,dv
			&=
			\int_{\mathbb R^3}e_1^{T}\left\{(v-U)\otimes(v-U)\right\}e_1f(v)\,dv
			\\
			&=
			e_1^T
			\left(
			\int_{\mathbb R^3}(v-U)\otimes(v-U)f(v)\,dv
			\right)e_1
			\\
			&=
			e_1^{T}\left\{\rho\Theta \right\}e_1
			=
			\rho\theta_1.
		\end{align*}
		Therefore, we get
		\begin{align}
			\int_{\mathbb R^3}|v\cdot e_1|^2f(v)\,dv
			=
			\rho\bigl((U\cdot e_1)^2+\theta_1\bigr)
			\ge
			\rho\theta_1.
			\label{eq:e1-second-moment-direct}
		\end{align}
		We now split the integral according to whether $|v\cdot e_1|$ is small or large.
		The contribution from the set
		\begin{align*}
			\left\{v\in\mathbb R^3: |v\cdot e_1|<\frac{\sqrt{\theta_1}}2\right\}
		\end{align*}
		is bounded by
		\begin{align*}
			\int_{\{|v\cdot e_1|<\sqrt{\theta_1}/2\}}|v\cdot e_1|^2f(v)\,dv
			\le
			\frac{\theta_1}{4}
			\int_{\mathbb R^3}f(v)\,dv
			=
			\frac{\rho\theta_1}{4}.
		\end{align*}
		Combining this with \eqref{eq:e1-second-moment-direct}, we obtain
        \begin{align}\label{eq:e1-tail-direct}
            \begin{split}
                \int_{\{|v\cdot e_1|\ge\sqrt{\theta_1}/2\}}|v\cdot e_1|^2f(v)\,dv&=\int_{\mathbb{R}^3}|v\cdot e_1|^2f(v)\,dv-\int_{\{|v\cdot e_1|<\sqrt{\theta_1}/2\}}|v\cdot e_1|^2f(v)\,dv\cr
                &\ge \frac{3\rho\theta_1}{4}.
            \end{split}
        \end{align}
		On the other hand, we define the transverse normalized
		quadratic form
		\begin{align*}
			Q_\perp(v)
			:=
			\frac{\bigl((v-U)\cdot e_2\bigr)^2}{\theta_2}
			+
			\frac{\bigl((v-U)\cdot e_3\bigr)^2}{\theta_3}.
		\end{align*}
		By the definition of $\Theta$ again, we have
		\begin{align}\label{eq:transverse-energy-direct}
			\int_{\mathbb R^3}Q_\perp(v)f(v)\,dv
			=
			\sum_{i=2}^3
			\frac1{\theta_i}
			\int_{\mathbb R^3}
			\bigl((v-U)\cdot e_i\bigr)^2f(v)\,dv
			=
			\sum_{i=2}^3
			\frac{\rho\theta_i}{\theta_i}
			=
			2\rho.
		\end{align}
		Let $K>0$ be a parameter to be chosen later. We split the tail region in
		\eqref{eq:e1-tail-direct} into
		\begin{align*}
			\left\{
			|v\cdot e_1|\ge\frac{\sqrt{\theta_1}}2,
			\quad
			Q_\perp(v)\le K |v\cdot e_1|^2
			\right\},
		\end{align*}
		and
		\begin{align*}
			\left\{
			|v\cdot e_1|\ge\frac{\sqrt{\theta_1}}2,
			\quad
			Q_\perp(v)>K |v\cdot e_1|^2
			\right\}.
		\end{align*}
		We first estimate the part where $Q_\perp(v)\le K |v\cdot e_1|^2$. Since $|v\cdot e_1|\leq |v|$, we see
		\begin{align*}
			f(v)\le \|f\|_{L^{\infty}_q}|v|^{-q}\le \|f\|_{L^{\infty}_q}|v\cdot e_1|^{-q}.
		\end{align*}
		For each fixed $s\in\mathbb R$, consider the plane
		\begin{align*}
			H_s:=\{v\in\mathbb R^3:v\cdot e_1=s\}.
		\end{align*}
		Inside this plane, the set
		\begin{align*}
			\left\{v\in H_s:Q_\perp(v)\le K s^2\right\}
		\end{align*}
		is an ellipse in the directions $e_2,e_3$ with semi-axes
		\begin{align*}
			\sqrt{\theta_2}\sqrt K\,|s|,
			\qquad
			\sqrt{\theta_3}\sqrt K\,|s|.
		\end{align*}
        Therefore, the two-dimensional area on $H_s$ of this ellipse is 
        \begin{align*}
            \begin{split}
                \operatorname{Area}(\{v\in H_s:Q_{\perp}(v)\leq Ks^2  \})=\pi\sqrt{\theta_2}\sqrt{\theta_3}Ks^2.
            \end{split}
        \end{align*}
        Using Fubini's theorem in the orthonormal basis
		$\{e_1,e_2,e_3\}$, we get
		\begin{align*}
			&\int_{\substack{|v\cdot e_1|\ge\sqrt{\theta_1}/2\\
            Q_\perp(v)\le K |v\cdot e_1|^2}}
			|v\cdot e_1|^2f(v)\,dv\cr
            &=\int_{|s|\geq\sqrt{\theta_1}/2}\left(\int_{\substack{v\in H_s\\ Q_{\perp}(v)\leq Ks^2}}s^2f(v)ds_\perp \right)ds\cr
            &\le
			\|f\|_{L^\infty_q}
			\int_{|s|\ge\sqrt{\theta_1}/2}
			|s|^{2-q} \operatorname{Area}(\{
			v\in H_s:Q_\perp(v)\le K s^2
			\}
			)\,ds\cr
			&=\pi\|f\|_{L^\infty_q}
			\sqrt{\theta_2}\sqrt{\theta_3}K
			\int_{|s|\ge\sqrt{\theta_1}/2}
			|s|^{4-q}\,ds.
		\end{align*}
        Here $ds_{\perp}$ denotes the two-dimensional area measure on $H_s$. Since $q>5$, we get
		\begin{align*}
			\int_{|s|\ge\sqrt{\theta_1}/2}|s|^{4-q}\,ds
			=
			C_q\bigl(\sqrt{\theta_1}\bigr)^{5-q}.
		\end{align*}
		Thus
		\begin{align}
			\int_{\substack{| v\cdot e_1|\ge\sqrt{\theta_1}/2\\ Q_\perp(v)\le K |v\cdot e_1|^2}}
			|v\cdot e_1|^2f(v)\,dv
			\le
			C_q
			\|f\|_{L^\infty_q}
			\sqrt{\theta_2}\sqrt{\theta_3}
			K
			\bigl(\sqrt{\theta_1}\bigr)^{5-q}.
			\label{eq:direct-near-transverse}
		\end{align}
		We now estimate the complementary part. On the set
		\begin{align*}
			Q_\perp(v)>K |v\cdot e_1|^2,
		\end{align*}
		we have
		\begin{align*}
			|v\cdot e_1|^2
			\le
			\frac1K Q_\perp(v).
		\end{align*}
		Therefore, by \eqref{eq:transverse-energy-direct},
		\begin{align}
			\int_{\substack{|v\cdot e_1|\ge\sqrt{\theta_1}/2\\ Q_\perp(v)>K |v\cdot e_1|^2}}
			|v\cdot e_1|^2f(v)\,dv
			\le
			\frac1K
			\int_{\mathbb R^3}Q_\perp(v)f(v)\,dv
			=\frac{2\rho}{K}.
			\label{eq:direct-far-transverse}
		\end{align}
		Combining \eqref{eq:e1-tail-direct}, \eqref{eq:direct-near-transverse}, and
		\eqref{eq:direct-far-transverse}, we find
		\begin{align}
			\rho\theta_1
			\le
			C_q
			\|f\|_{L^\infty_q}
			\sqrt{\theta_2}\sqrt{\theta_3}
			K
			\bigl(\sqrt{\theta_1}\bigr)^{5-q}
			+
			C\frac{\rho}{K}.
			\label{eq:direct-before-balance}
		\end{align}
		We now choose $K>0$ so that the two terms on the right-hand side are
		balanced:
		\begin{align*}
			\|f\|_{L^\infty_q}
			\sqrt{\theta_2}\sqrt{\theta_3}
			K
			\bigl(\sqrt{\theta_1}\bigr)^{5-q}
			=
			\frac{\rho}{K}.
		\end{align*}
		Equivalently,
		\begin{align*}
			K^2
			=
			\frac{\rho}
			{
				\|f\|_{L^\infty_q}
				\sqrt{\theta_2}\sqrt{\theta_3}
				\bigl(\sqrt{\theta_1}\bigr)^{5-q}
			}.
		\end{align*}
		With this choice, \eqref{eq:direct-before-balance} gives
		\begin{align*}
			\rho\theta_1
			\le
			C_q
			\left(
			\rho
			\|f\|_{L^\infty_q}
			\sqrt{\theta_2}\sqrt{\theta_3}
			\bigl(\sqrt{\theta_1}\bigr)^{5-q}
			\right)^{1/2}.
		\end{align*}
		This yields
		\begin{align*}
			\rho
			\bigl(\sqrt{\theta_1}\bigr)^{q-1}
			\le
			C_q
			\|f\|_{L^\infty_q}
			\sqrt{\theta_2}\sqrt{\theta_3}.
		\end{align*}
		Finally, using
		\begin{align*}
			\sqrt{\det\Theta}
			=
			\sqrt{\theta_1}\sqrt{\theta_2}\sqrt{\theta_3},
		\end{align*}
		we obtain
		\begin{align*}
			\frac{\rho\theta_1^{q/2}}{\sqrt{\det\Theta}}=
			\frac{
				\rho(\sqrt{\theta_1})^q
			}{
				\sqrt{\theta_1}\sqrt{\theta_2}\sqrt{\theta_3}
			}
			=
			\frac{
				\rho(\sqrt{\theta_1})^{q-1}
			}{
				\sqrt{\theta_2}\sqrt{\theta_3}
			}
			\le
			C_q\|f\|_{L^\infty_q}.
		\end{align*}
		This proves \eqref{item:B4}.		
	\end{proof}

	\begin{lemma}
		\label{lem:weighted-critical-esbgk-gaussian}
        Let $\rho>0$, $U\in\mathbb{R}^3$, and let $\mathcal T_{-1/2}$ be a symmetric positive definite $3\times 3$ matrix. Define
		\begin{align*}
			\mathcal M_{-1/2}[f](v)
			:=
			\frac{\rho}
			{\sqrt{\det(2\pi\mathcal T_{-1/2})}}
			\exp\left(
			-\frac12
			(v-U)^T\mathcal T_{-1/2}^{-1}(v-U)
			\right).
		\end{align*}
		Then, for every $q>0$,
		\begin{align*}
			\|\mathcal M_{-1/2}[f]\|_{L^\infty_q}
			\le
			C_q
			\frac{\rho}{\sqrt{\det\mathcal T_{-1/2}}}
			\left(
			\lambda_{\max}(\mathcal T_{-1/2})^{q/2}
			+
			\langle U\rangle^q
			\right).
		\end{align*}
	\end{lemma}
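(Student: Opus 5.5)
We prove the bound pointwise in $v$. Write $\Lambda:=\lambda_{\max}(\mathcal T_{-1/2})$ and $w:=v-U$. The basic inequality is
\begin{align*}
\langle v\rangle^q\le C_q\bigl(\langle U\rangle^q+|w|^q\bigr),
\end{align*}
which follows from $\langle v\rangle\le\langle U\rangle+|w|$ and convexity of $s\mapsto s^q$ for $q\ge1$; for $0<q<1$ it follows instead from subadditivity of $s\mapsto s^q$. Multiplying by $\mathcal M_{-1/2}[f](v)$, we split
\begin{align*}
\langle v\rangle^q\mathcal M_{-1/2}[f](v)\le C_q\bigl(\mathrm{I}+\mathrm{II}\bigr),
\qquad \mathrm{I}:=\langle U\rangle^q\mathcal M_{-1/2}[f](v),
\qquad \mathrm{II}:=|w|^q\mathcal M_{-1/2}[f](v).
\end{align*}

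The term $\mathrm{I}$ is immediate. The exponential factor is at most $1$, so
\begin{align*}
\mathrm{I}\le\frac{\rho}{(2\pi)^{3/2}\sqrt{\det\mathcal T_{-1/2}}}\langle U\rangle^q .
\end{align*}

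For $\mathrm{II}$, we compare the quadratic form in the exponent with the Euclidean norm using the largest eigenvalue. Since $\mathcal T_{-1/2}\le\Lambda I$, we have $\mathcal T_{-1/2}^{-1}\ge\Lambda^{-1}I$, and hence
\begin{align*}
w^T\mathcal T_{-1/2}^{-1}w\ge\frac{|w|^2}{\Lambda}.
\end{align*}
Therefore
\begin{align*}
\mathrm{II}\le\frac{\rho}{(2\pi)^{3/2}\sqrt{\det\mathcal T_{-1/2}}}\,|w|^q e^{-|w|^2/(2\Lambda)}.
\end{align*}
Substituting $s=|w|/\sqrt\Lambda$, we get $|w|^qe^{-|w|^2/(2\Lambda)}=\Lambda^{q/2}s^qe^{-s^2/2}$. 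The one-variable bound $\sup_{s\ge0}s^qe^{-s^2/2}=(q/e)^{q/2}$ then gives
\begin{align*}
\mathrm{II}\le C_q\frac{\rho}{\sqrt{\det\mathcal T_{-1/2}}}\Lambda^{q/2}.
\end{align*}

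Adding the bounds for $\mathrm{I}$ and $\mathrm{II}$ and taking the essential supremum over $v$ yields the stated estimate, with $C_q$ independent of $\rho$, $U$ and $\mathcal T_{-1/2}$.

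There is no genuine obstacle here. The only point requiring care is the direction of the matrix inequality: the Gaussian decay must be controlled from below by the \emph{largest} eigenvalue, because $\mathcal T_{-1/2}^{-1}\ge\Lambda^{-1}I$. The smallest eigenvalue, which may degenerate at $\nu=-1/2$, never enters. That degeneracy is instead absorbed into the prefactor $\rho/\sqrt{\det\mathcal T_{-1/2}}$, which the paper then handles separately through \eqref{item:B1}, \eqref{item:B3} and \eqref{item:B4}.
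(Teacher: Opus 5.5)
Your proof is correct and follows essentially the same route as the paper: split the weight into $\langle U\rangle^q$ and $|v-U|^q$ parts, then control the second part through $\lambda_{\max}(\mathcal T_{-1/2})$ and the finiteness of $\sup_{s\ge0}s^qe^{-s^2/2}$. The only difference is bookkeeping. The paper first changes variables $z=\mathcal T_{-1/2}^{-1/2}(v-U)$ and uses $|\mathcal T_{-1/2}^{1/2}z|\le\lambda_{\max}^{1/2}|z|$, while you stay in $w=v-U$ and use the equivalent bound $\mathcal T_{-1/2}^{-1}\ge\lambda_{\max}^{-1}I$.
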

	
	\begin{proof}
		By definition of the weighted norm,
		\begin{align*}
			\|\mathcal M_{-1/2}[f]\|_{L^\infty_q}
			&=
			\operatorname*{ess\,sup}_{v\in\mathbb R^3}
			\langle v\rangle^q
			\frac{\rho}
			{\sqrt{\det(2\pi\mathcal T_{-1/2})}}
			\exp\left(
			-\frac12
			(v-U)^T\mathcal T_{-1/2}^{-1}(v-U)
			\right).
		\end{align*}
		Since $\mathcal T_{-1/2}$ is symmetric positive definite, the change of variables
		\begin{align*}
			z
			:=
			\mathcal T_{-1/2}^{-1/2}(v-U),
			\qquad
			v
			=
			U+\mathcal T_{-1/2}^{1/2}z
		\end{align*}
		is one-to-one from $\mathbb R^3$ onto $\mathbb R^3$. Moreover,
		\begin{align*}
			(v-U)^T\mathcal T_{-1/2}^{-1}(v-U)
			&=
			\left(
			\mathcal T_{-1/2}^{1/2}z
			\right)^T
			\mathcal T_{-1/2}^{-1}
			\left(
			\mathcal T_{-1/2}^{1/2}z
			\right)
			\\
			&=
			z^T
			\mathcal T_{-1/2}^{1/2}
			\mathcal T_{-1/2}^{-1}
			\mathcal T_{-1/2}^{1/2}
			z
			\\
			&=
			z^Tz
			=
			|z|^2.
		\end{align*}
		Therefore,
		\begin{align}
			\|\mathcal M_{-1/2}[f]\|_{L^\infty_q}
			&=
			\frac{\rho}
			{\sqrt{\det(2\pi\mathcal T_{-1/2})}}
			\sup_{z\in\mathbb R^3}
			\left\langle
			U+\mathcal T_{-1/2}^{1/2}z
			\right\rangle^q
			e^{-|z|^2/2}.
			\label{eq:gaussian-after-z-change}
		\end{align}
		We now estimate the weight. By the triangle inequality,
		\begin{align*}
			\left\langle
			U+\mathcal T_{-1/2}^{1/2}z
			\right\rangle
			&=
			\left(
			1+
			\left|
			U+\mathcal T_{-1/2}^{1/2}z
			\right|^2
			\right)^{1/2}
			\\
			&\le
			1+|U|+\left|\mathcal T_{-1/2}^{1/2}z\right|
			\\
			&\le
			C
			\left(
			\langle U\rangle
			+
			\left|\mathcal T_{-1/2}^{1/2}z\right|
			\right).
		\end{align*}
		Since
		\begin{align*}
			\left|\mathcal T_{-1/2}^{1/2}z\right|^2=
			z^T\mathcal T_{-1/2}z\le
			\lambda_{\max}(\mathcal T_{-1/2})|z|^2,
		\end{align*}
		we have
		\begin{align*}
			\left|\mathcal T_{-1/2}^{1/2}z\right|
			\le
			\lambda_{\max}(\mathcal T_{-1/2})^{1/2}|z|.
		\end{align*}
		Hence, using $(a+b)^q\le C_q(a^q+b^q)$ for $a,b\ge0$,
		\begin{align}
			\left\langle
			U+\mathcal T_{-1/2}^{1/2}z
			\right\rangle^q
			\le
			C_q
			\left(
			\langle U\rangle^q
			+
			\left|
			\mathcal T_{-1/2}^{1/2}z
			\right|^q
			\right)
			\le
			C_q
			\left(
			\langle U\rangle^q
			+
			\lambda_{\max}(\mathcal T_{-1/2})^{q/2}|z|^q
			\right).
			\label{eq:weight-bound-critical}
		\end{align}
		Substituting \eqref{eq:weight-bound-critical} into
		\eqref{eq:gaussian-after-z-change}, we obtain
		\begin{align*}
			\|\mathcal M_{-1/2}[f]\|_{L^\infty_q}
			&\le
			C_q
			\frac{\rho}
			{\sqrt{\det(2\pi\mathcal T_{-1/2})}}
			\sup_{z\in\mathbb R^3}
			\left(
			\langle U\rangle^q
			+
			\lambda_{\max}(\mathcal T_{-1/2})^{q/2}|z|^q
			\right)
			e^{-|z|^2/2}
			\\
			&\le C_q
			\frac{\rho}
			{\sqrt{\det\mathcal T_{-1/2}}}
			\left(
			\langle U\rangle^q
			+
			\lambda_{\max}(\mathcal T_{-1/2})^{q/2}
			\right),
		\end{align*}
        where we used
        \begin{align*}
            \begin{split}
                \sup_{z\in\mathbb R^3}e^{-|z|^2/2}=1,\qquad \sup_{z\in\mathbb R^3}|z|^q e^{-|z|^2/2}<\infty.   
            \end{split}
        \end{align*}
		This completes the proof.
	\end{proof}

	\bigskip
	\begin{lemma}
		\label{lem:critical-ES-Maxwellian-PP-style}
		Let $q>5$. Assume $f\ge0$, $\|f\|_{L^\infty_q}<\infty$, and $\rho>0$.
		Then, there exists $C_{\mathcal{M}}>0$ depending only on $q$ such that
		\begin{align*}
		    \begin{split}
		      \|\mathcal M_{-1/2}[f]\|_{L^\infty_q}
		\le
		C_{\mathcal{M}}\|f\|_{L^\infty_q}.    
		    \end{split}
		\end{align*}
	\end{lemma}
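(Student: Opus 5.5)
The plan is to combine Lemma~\ref{lem:weighted-critical-esbgk-gaussian} with the four estimates of Proposition~\ref{prop:four-moment-estimates}, using two structural relations between $\mathcal T_{-1/2}$ and $\Theta$. Since $\mathcal T_{-1/2}=\frac12((\operatorname{tr}\Theta)I-\Theta)$, it has the same eigenvectors as $\Theta$, with eigenvalues $\frac12(\theta_j+\theta_k)$ for $\{i,j,k\}=\{1,2,3\}$. First, $\frac12(\theta_j+\theta_k)\le\theta_1$, so $\lambda_{\max}(\mathcal T_{-1/2})\le\theta_1$. Second, by AM--GM, $\frac12(\theta_j+\theta_k)\ge\sqrt{\theta_j\theta_k}$; multiplying the three eigenvalues gives $\det\mathcal T_{-1/2}\ge\theta_1\theta_2\theta_3=\det\Theta$. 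In particular $\mathcal T_{-1/2}$ is positive definite by Lemma~\ref{lem:theta-positive-definite}, and Lemma~\ref{lem:weighted-critical-esbgk-gaussian} applies, giving
\begin{align*}
\|\mathcal M_{-1/2}[f]\|_{L^\infty_q}\le C_q\frac{\rho}{\sqrt{\det\Theta}}\left(\theta_1^{q/2}+\langle U\rangle^q\right)\le C_q\left(\frac{\rho\,\theta_1^{q/2}}{\sqrt{\det\Theta}}+\frac{\rho}{\sqrt{\det\Theta}}+\frac{\rho|U|^q}{\sqrt{\det\Theta}}\right).
\end{align*}

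The first term is bounded by \eqref{item:B4}. The second is bounded by \eqref{item:B1}, since $\|f\|_{L^\infty}\le\|f\|_{L^\infty_q}$. The remaining work is the mean-velocity term $\rho|U|^q/\sqrt{\det\Theta}$, which I split into two cases.

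\emph{Case $|U|^2\ge\theta_1$.} Here $(\theta_1+|U|^2)^{3/2}\le 2^{3/2}|U|^3$, so \eqref{item:B3} (valid since $q>5>1$) gives
\begin{align*}
\frac{\rho|U|^q}{\sqrt{\det\Theta}}=\frac{\rho|U|^{q+3}}{\sqrt{\det\Theta}\,|U|^3}\le 2^{3/2}\frac{\rho|U|^{q+3}}{\sqrt{\det\Theta}(\theta_1+|U|^2)^{3/2}}\le C_q\|f\|_{L^\infty_q}.
\end{align*}

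\emph{Case $|U|^2<\theta_1$.} Here $|U|^q\le\theta_1^{q/2}$, and \eqref{item:B4} applies again.

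Combining the three terms yields the claim with $C_{\mathcal M}$ depending only on $q$. There is no serious obstacle. The points needing care are the determinant comparison (the AM--GM step) and checking that the case split in the $U$ term loses only absolute constants. Estimate \eqref{item:B2} is not needed.
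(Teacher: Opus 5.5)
Your proposal is correct and matches the paper's proof step for step. You apply Lemma~\ref{lem:weighted-critical-esbgk-gaussian} together with $\det\mathcal T_{-1/2}\ge\det\Theta$ and $\lambda_{\max}(\mathcal T_{-1/2})\le\theta_1$, bound the first two terms by \eqref{item:B1} and \eqref{item:B4}, and treat the mean-velocity term with the same case split as the paper: \eqref{item:B4} when $|U|\le\sqrt{\theta_1}$ and \eqref{item:B3} when $|U|>\sqrt{\theta_1}$. Your AM--GM argument also justifies the determinant comparison, which the paper states without proof.
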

	
	\begin{proof}
		Recall
        \begin{align*}
            \begin{split}
                \mathcal T_{-1/2}
		=
		\frac12\bigl((\operatorname{tr}\Theta)I-\Theta\bigr).   
            \end{split}
        \end{align*}
		By Lemma~\ref{lem:weighted-critical-esbgk-gaussian}, we have
        \begin{align*}
            \begin{split}
                \|\mathcal M_{-1/2}[f]\|_{L^\infty_q}
		\le
		C_q
		\frac{\rho}{\sqrt{\det \mathcal T_{-1/2}}}
		\left(
		\lambda_{\max}\left(\mathcal T_{-1/2}\right)^{q/2}
		+
		\langle U\rangle^q
		\right).    
            \end{split}
        \end{align*}
		Since $\Theta$ is positive definite, we have
        \begin{align*}
            \begin{split}
                \det  \mathcal{T}_{-1/2}\geq \det\Theta,\qquad \lambda_{\max}(\mathcal{T}_{-1/2})\leq \lambda_{\max}(\Theta)=\theta_1.
            \end{split}
        \end{align*}
		Hence, we get
		\begin{align*}
			\|\mathcal M_{-1/2}[f]\|_{L^\infty_q}
			\le
			C_q
			\frac{\rho}{\sqrt{\det\Theta}}
			\left(
			\theta_1^{q/2}
			+
			\langle U\rangle^q
			\right)\leq C_q\frac{\rho}{\sqrt{\det\Theta}}+ 	C_q\frac{\rho\theta_1^{q/2}}{\sqrt{\det\Theta}}
			+
			C_q\frac{\rho |U|^q}{\sqrt{\det\Theta}}.
		\end{align*}
		The first two terms are bounded using \eqref{item:B1} and \eqref{item:B4} in Proposition~\ref{prop:four-moment-estimates}:
        \begin{align*}
            \begin{split}
             C_q\frac{\rho}{\sqrt{\det\Theta}}+ 	C_q\frac{\rho\theta_1^{q/2}}{\sqrt{\det\Theta}}\leq C_q\|f\|_{L^{\infty}_q}.   
            \end{split}
        \end{align*}
        To control the last term, we split it into two cases. If $|U|\le \sqrt{\theta_1}$, \eqref{item:B4} leads to
        \begin{align*}
            \begin{split}
                \frac{\rho|U|^q}{\sqrt{\det\Theta}}\leq \frac{\rho\theta^{\frac{q}{2}}_1}{\sqrt{\det\Theta}}\leq C_q\|f\|_{L^{\infty}_q}.
            \end{split}
        \end{align*}
        If $|U|>\sqrt{\theta_1}$, we utilize \eqref{item:B3} to get
        \begin{align*}
            \begin{split}
                \frac{\rho |U|^q}{\sqrt{\det\Theta}}
		=
		\frac{\rho |U|^{q+3}}
		{\sqrt{\det\Theta}|U|^3}
		\le
		C
		\frac{\rho |U|^{q+3}}
		{\sqrt{\det\Theta}(\theta_1+|U|^2)^{3/2}}
		\le
		C_q\|f\|_{L^\infty_q}.    
            \end{split}
        \end{align*}
		Combining the two cases gives
        \begin{align*}
            \begin{split}
                C_q\frac{\rho|U|^q}{\sqrt{\det\Theta}}\leq C_q\|f\|_{L^{\infty}_q}.
            \end{split}
        \end{align*}
		This completes the proof.
	\end{proof}
	
	\section{Uniform ellipticity and Lipschitz continuity}
	\label{sec:ellipticity-lipschitz}
	The purpose of this section is to show that the density lower bound and $L^\infty_q$ bound imply uniform ellipticity of $\mathcal T_{-1/2}[f]$. We then use this ellipticity to prove the $L^1_2$-Lipschitz continuity of the critical ES-BGK Maxwellian. Note that we continue to use the velocity-only norms \eqref{eq:norm-restric-v} in Section~\ref{sec:moment-estimates} throughout this section. 
	
	\begin{proposition}
		\label{prop:endpoint-temperature-lower}
		Let $q>5$. Suppose that $f\ge0$, $f\in L^1_2(\mathbb R^3)$, $\|f\|_{L^{\infty}_q}<\infty$, and there is $\rho_*>0$ such that
		\begin{align*}
		    \begin{split}
		      \rho\geq \rho_*.   
		    \end{split}
		\end{align*}
		Then there exist positive constants
		\begin{align*}
		    \begin{split}
		        c_l=c_l(q,\rho_*,\|f\|_{L^{\infty}_q}),\qquad c_u=c_u(q,\rho_*,\|f\|_{L^{\infty}_q}),
		    \end{split}
		\end{align*}
		such that
		\begin{align}\label{eq:macro upper}
		    \begin{split}
		        \rho+|U|+\operatorname{tr}\Theta\leq c_u,
		    \end{split}
		\end{align}
		and
        \begin{align}\label{eq:temperature tensor lower and upper}
            \begin{split}
                c_lI\leq \mathcal{T}_{-1/2}\leq c_uI.
            \end{split}
        \end{align}
		In particular, the scalar temperature also satisfies
		\begin{align*}
		    \begin{split}
		      T=\frac13\operatorname{tr}\Theta\ge
		c_l>0.    
		    \end{split}
		\end{align*}
	\end{proposition}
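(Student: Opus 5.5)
The plan is to read off the upper bounds directly from the moment estimates of Proposition~\ref{prop:four-moment-estimates}, and then to obtain the lower bound on $\mathcal T_{-1/2}$ from \eqref{item:B1} combined with those upper bounds. Throughout, write $A:=\|f\|_{L^\infty_q}$.

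\emph{Upper bounds.} Since $\langle v\rangle^q f\le A$ and $q>5>3$, we have
\begin{align*}
\rho\le A\int_{\mathbb R^3}\langle v\rangle^{-q}\,dv=C_qA.
\end{align*}
Next, \eqref{item:B2} gives $(|U|^2+\operatorname{tr}\Theta)^{(q-3)/2}\le C_qA/\rho\le C_qA/\rho_*$. Hence both $|U|$ and $\operatorname{tr}\Theta$ are bounded by a constant depending only on $(q,\rho_*,A)$, and this yields \eqref{eq:macro upper}. For the upper bound in \eqref{eq:temperature tensor lower and upper}, I use $\mathcal T_{-1/2}=\frac12((\operatorname{tr}\Theta)I-\Theta)\le \frac12(\operatorname{tr}\Theta)I$, which holds because $\Theta\ge0$ by Lemma~\ref{lem:theta-positive-definite}. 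This gives $\mathcal T_{-1/2}\le c_uI$ after enlarging $c_u$ if necessary.

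\emph{Lower bound.} This is the key step. The eigenvalues of $\mathcal T_{-1/2}$ are $\frac12(\theta_j+\theta_k)$ for the pairs $\{j,k\}\subset\{1,2,3\}$. The smallest of them is $\frac12(\theta_2+\theta_3)\ge\frac12\theta_2$. It therefore suffices to bound $\theta_2$ from below. By \eqref{item:B1}, $\sqrt{\theta_1\theta_2\theta_3}\ge\rho/(CA)\ge\rho_*/(CA)$. Using $\theta_3\le\theta_2$ and $\theta_1\le\operatorname{tr}\Theta\le c_u$, this gives
\begin{align*}
\theta_2^2\ge\theta_2\theta_3\ge\frac{\rho_*^2}{C^2A^2c_u}.
\end{align*}
Hence $\theta_2\ge c>0$, and so $\mathcal T_{-1/2}\ge \frac c2 I$. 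The same argument yields $\operatorname{tr}\Theta\ge\theta_1\ge\theta_2\ge c$, which gives the scalar temperature bound $T\ge c/3$. Finally, I take $c_l$ to be the minimum of these constants.

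The main subtlety is recognizing that the lower bound on $\mathcal T_{-1/2}$ needs only $\theta_2$, not $\theta_3$. A lower bound on $\theta_3$ cannot be extracted, but it is not needed, because $\mathcal T_{-1/2}$ averages pairs of eigenvalues. The determinant bound \eqref{item:B1} together with the trace upper bound then controls the product $\theta_2\theta_3\le\theta_2^2$. I do not expect any other obstacle, since all other steps are direct applications of the earlier estimates.
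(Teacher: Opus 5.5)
Your proof is correct and is essentially the paper's proof. It uses the same bound $\rho\le C_qA$, the same appeal to (B2) for $|U|^2+\operatorname{tr}\Theta$, the same appeal to (B1) to get $\det\Theta\ge\rho_*^2/(C^2A^2)$, and the same observation that the eigenvalues of $\mathcal T_{-1/2}$ are $\frac12(\theta_j+\theta_k)$. The only difference is that you bound $\theta_2$ from below via $\theta_2^2\ge\theta_2\theta_3\ge\det\Theta/\theta_1$, whereas the paper bounds $\theta_3$.

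Your closing remark, however, is mistaken: a lower bound on $\theta_3$ \emph{can} be extracted, and that is exactly the paper's route. Since $\theta_1,\theta_2\le\operatorname{tr}\Theta\le c_u$, one has $\theta_3=\det\Theta/(\theta_1\theta_2)\ge\rho_*^2/(C^2A^2c_u^2)$. So $\Theta$ itself is uniformly elliptic, and both $\lambda_{\min}(\mathcal T_{-1/2})=\frac12(\theta_2+\theta_3)\ge\theta_3$ and $T=\frac13\operatorname{tr}\Theta\ge\theta_3$ follow at once.

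The degeneracy at $\nu=-1/2$ discussed in the introduction is the failure of a comparison $\mathcal T_{-1/2}\ge cTI$ that is uniform over all $f$. It plays no role here. A lower bound on the product $\theta_1\theta_2\theta_3$ (from (B1)) together with an upper bound on the sum $\theta_1+\theta_2+\theta_3$ (from (B2)) confines every eigenvalue of $\Theta$ to a compact subset of $(0,\infty)$. So the ``averaging of pairs'' is not what makes the argument work. Your $\theta_2$ route only buys a better dependence of $c_l$ on the data, of order $\rho_*/(A\sqrt{c_u})$ instead of $\rho_*^2/(A^2c_u^2)$. The paper's route gives the stronger by-product $\Theta\ge c_lI$.
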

    \begin{proof}
        First we prove \eqref{eq:macro upper}. Since $q>3$, we have
        \begin{align*}
            \begin{split}
                \rho=\int_{\mathbb{R}^3}f(v)dv\leq \|f\|_{L^{\infty}_q}\int_{\mathbb{R}^3}\left\langle v\right\rangle^{-q}dv\leq C_q\|f\|_{L^{\infty}_q}.
            \end{split}
        \end{align*}
    Next, by \eqref{item:B2} in Proposition~\ref{prop:four-moment-estimates}, we get
    \begin{align*}
        \begin{split}
            |U|^2+\operatorname{tr}\Theta\leq \left(\frac{C_q\|f\|_{L^{\infty}_q}}{\rho_*} \right)^{\frac{2}{q-3}}.
        \end{split}
    \end{align*}
    Therefore, there exists a constant $c_u$ depending on $q$, $\rho_*$, and $\|f\|_{L^{\infty}_q}$, which satisfies \eqref{eq:macro upper}. Now we show \eqref{eq:temperature tensor lower and upper}. Using \eqref{item:B1} in Proposition~\ref{prop:four-moment-estimates}, we see
    \begin{align}\label{eq:stress lower est}
        \begin{split}
            \theta_1\theta_2\theta_3=\det\Theta\geq \left(\frac{\rho}{C\|f\|_{L^{\infty}}}\right)^{2}\geq \left(\frac{\rho_*}{C\|f\|_{L^{\infty}_q}}\right)^2.
        \end{split}
    \end{align}
    Since we have already proved \eqref{eq:macro upper}, the following holds:
    \begin{align*}
        \begin{split}
            \theta_1\leq \operatorname{tr}\Theta\leq c_u,\qquad \theta_2\leq \operatorname{tr}\Theta\leq c_u.
        \end{split}
    \end{align*}
    Using this with \eqref{eq:stress lower est}, $\theta_3$ is controlled by
    \begin{align*}
        \begin{split}
            \theta_3=\frac{\det\Theta}{\theta_1\theta_2}\geq \frac{1}{c^2_u}\left(\frac{\rho_*}{C\|f\|_{L^{\infty}_q}}\right)^2.
        \end{split}
    \end{align*}
    Since $\theta_3$ is the smallest of these eigenvalues of $\Theta$, we deduce
    \begin{align*}
        \begin{split}
            \Theta\geq  \frac{1}{c^2_u}\left(\frac{\rho_*}{C\|f\|_{L^{\infty}_q}}\right)^2I.
        \end{split}
    \end{align*}
    We now pass from $\Theta$ to the critical temperature tensor $\mathcal{T}_{-1/2}=\frac{1}{2}((\operatorname{tr}\Theta)I-\Theta)$. In an eigenbasis of $\Theta$, the eigenvalues of $\mathcal{T}_{-1/2}$ are
    \begin{align*}
        \begin{split}
            \frac{\theta_1+\theta_2}{2},\qquad \frac{\theta_2+\theta_3}{2},\qquad \frac{\theta_1+\theta_3}{2}. 
        \end{split}
    \end{align*}
    Since $\theta_1\geq \theta_2\geq \theta_3$, the smallest of these eigenvalues satisfies
    \begin{align*}
        \begin{split}
            \lambda_{\min}(\mathcal{T}_{-1/2})=\frac{\theta_2+\theta_3}{2}\geq \theta_3\geq \frac{1}{c^2_u}\left(\frac{\rho_*}{C\|f\|_{L^{\infty}_q}}\right)^2.
        \end{split}
    \end{align*}
    If we set
    \begin{align*}
        \begin{split}
            c_l=\frac{1}{c^2_u}\left(\frac{\rho_*}{C\|f\|_{L^{\infty}_q}}\right)^2,
        \end{split}
    \end{align*}
    we conclude
    \begin{align*}
        \begin{split}
            \mathcal{T}_{-1/2}\geq c_lI.
        \end{split}
    \end{align*}
    Moreover, \eqref{eq:macro upper} gives
    \begin{align*}
        \begin{split}
            \lambda_{\max}(\mathcal{T}_{-1/2})=\frac{\theta_1+\theta_2}{2}\leq \operatorname{tr}\Theta\leq c_u.
        \end{split}
    \end{align*}
    Hence, we have
    \begin{align*}
        \begin{split}
            \mathcal{T}_{-1/2}\le c_u I.
        \end{split}
    \end{align*}
    This completes the proof for \eqref{eq:temperature tensor lower and upper}.
	\end{proof}
    
	\begin{lemma}
		\label{lem:macro-lipschitz-L12}
		Let $q>5$.  Suppose that $f,g\ge0$ satisfy
        \begin{align*}
            \begin{split}
                f,g\in L^1_2(\mathbb R^3),
		\qquad
		\rho_f,\rho_g\ge \rho_*>0,
		\qquad
		\|f\|_{L^\infty_q}+\|g\|_{L^\infty_q}<\infty.   
            \end{split}
        \end{align*}
		Then, we have
        \begin{align*}
            \begin{split}
                |\rho_f-\rho_g|+|U_f-U_g|+|\Theta_f-\Theta_g|+|\mathcal T_{-1/2}[f]-\mathcal T_{-1/2}[g]|\le C\|f-g\|_{L^1_2},   
            \end{split}
        \end{align*}
		where $C=C\bigl(q,\rho_*,\max(\|f\|_{L^{\infty}_q},\|g\|_{L^{\infty}_q})\bigr)$.
	\end{lemma}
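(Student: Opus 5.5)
The plan is to write every macroscopic quantity in terms of the three raw moments
\begin{align*}
m_0(f)=\rho_f,\qquad m_1(f)=\int_{\mathbb R^3}vf\,dv=\rho_fU_f,\qquad m_2(f)=\int_{\mathbb R^3}v\otimes v f\,dv=\rho_f\Theta_f+\rho_fU_f\otimes U_f,
\end{align*}
which depend linearly on $f$. Then I will pass from Lipschitz bounds on the raw moments to the nonlinear quantities $U=m_1/m_0$ and $\Theta=m_2/m_0-U\otimes U$. This passage uses the lower bound $\rho\ge\rho_*$ and the upper bounds of Proposition~\ref{prop:endpoint-temperature-lower}.

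The first step is the linear estimate. Since $1\le\langle v\rangle^2$, $|v|\le\langle v\rangle^2$ and $|v\otimes v|=|v|^2\le\langle v\rangle^2$, we get
\begin{align*}
|m_0(f)-m_0(g)|+|m_1(f)-m_1(g)|+|m_2(f)-m_2(g)|\le 3\|f-g\|_{L^1_2}.
\end{align*}
This already gives the bound for $|\rho_f-\rho_g|$.

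The second step treats the velocity. Write
\begin{align*}
U_f-U_g=\frac{m_1(f)-m_1(g)}{\rho_f}+m_1(g)\Bigl(\frac1{\rho_f}-\frac1{\rho_g}\Bigr)=\frac{m_1(f)-m_1(g)}{\rho_f}-U_g\,\frac{\rho_f-\rho_g}{\rho_f}.
\end{align*}
By \eqref{eq:macro upper} in Proposition~\ref{prop:endpoint-temperature-lower}, applied to $g$ with $\|g\|_{L^\infty_q}\le M:=\max(\|f\|_{L^\infty_q},\|g\|_{L^\infty_q})$, we have $|U_g|\le c_u(q,\rho_*,M)$. Together with $\rho_f\ge\rho_*$ this gives $|U_f-U_g|\le C\|f-g\|_{L^1_2}$.

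The third step treats the stress tensor in the same way. We have
\begin{align*}
\Theta_f-\Theta_g=\frac{m_2(f)-m_2(g)}{\rho_f}-\frac{m_2(g)}{\rho_g}\,\frac{\rho_f-\rho_g}{\rho_f}-\bigl(U_f\otimes(U_f-U_g)+(U_f-U_g)\otimes U_g\bigr).
\end{align*}
Here $|m_2(g)|/\rho_g\le|U_g|^2+\operatorname{tr}\Theta_g$ is bounded by \eqref{eq:macro upper}. The factors $|U_f|$ and $|U_g|$ are bounded in the same way. Hence $|\Theta_f-\Theta_g|\le C\|f-g\|_{L^1_2}$.

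The last step is the temperature tensor. Since $\mathcal T_{-1/2}=\frac12((\operatorname{tr}\Theta)I-\Theta)$ is linear in $\Theta$, and $|\operatorname{tr}A|\le\sqrt3|A|$,
\begin{align*}
|\mathcal T_{-1/2}[f]-\mathcal T_{-1/2}[g]|\le C|\Theta_f-\Theta_g|.
\end{align*}

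The only real issue is ensuring the constants depend solely on $(q,\rho_*,M)$. This is exactly why the uniform upper bounds of Proposition~\ref{prop:endpoint-temperature-lower} are invoked, rather than any bound depending on $\|f\|_{L^1_2}$. The argument does not use the ellipticity lower bound; that bound is needed only later, for the Lipschitz continuity of the Maxwellian itself.
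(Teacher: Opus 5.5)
Your proposal is correct and follows essentially the paper's own route: Lipschitz bounds on the linear raw moments $\rho$, $\int v f\,dv$ and $\int v\otimes v f\,dv$ (the paper's $S_f$), then the quotient decompositions for $U$ and $\Theta$, and finally the linearity of $\mathcal T_{-1/2}$ in $\Theta$. In the quotient step, $\rho\ge\rho_*$ controls the denominators, and the upper bounds of Proposition~\ref{prop:endpoint-temperature-lower} control $|U_f|$, $|U_g|$ and $|S_g|/\rho_g$. The only cosmetic difference is that you bound the term $m_1(g)\bigl(\frac1{\rho_f}-\frac1{\rho_g}\bigr)$ through $|U_g|\le c_u$, whereas the paper bounds $\int|v|g\,dv$ directly; the two are equivalent.
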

	
	\begin{proof}
		By definition, we see
        \begin{align*}
            \begin{split}
                |\rho_f-\rho_g|\leq \int_{\mathbb{R}^3}|f-g|dv\leq \|f-g\|_{L^1_2}.
            \end{split}
        \end{align*}
		Next, we have
        \begin{align*}
            \begin{split}
                |U_f-U_g|&=\left|\frac{1}{\rho_f}\int_{\mathbb{R}^3}vf dv-\frac{1}{\rho_g}\int_{\mathbb{R}^3}vgdv \right|\cr
                &\leq \frac{1}{\rho_f}\int_{\mathbb{R}^3}|v||f-g|dv+\frac{\left|\rho_f-\rho_g \right|}{\rho_f\rho_g}\int_{\mathbb{R}^3}|v||g|dv\cr
                &\leq C(q,\rho_*,\|g\|_{L^{\infty}_q})\|f-g\|_{L^1_2}.
            \end{split}
        \end{align*}
		We now estimate the stress tensors. Define the second moment matrices
        \begin{align*}
            \begin{split}
                S_f:=\int_{\mathbb R^3}v\otimes v\,f(v)\,dv,
		\qquad
		S_g:=\int_{\mathbb R^3}v\otimes v\,g(v)\,dv.   
            \end{split}
        \end{align*}
		Then, we have
        \begin{align*}
            \begin{split}
                |S_f-S_g|
		\le
		\int_{\mathbb R^3}|v|^2|f-g|\,dv
		\le \|f-g\|_{L^1_2}.
            \end{split}
        \end{align*}
		Indeed, Proposition~\ref{prop:endpoint-temperature-lower} implies
        \begin{align*}
            \begin{split}
                |S_f|=|\rho_f\Theta_f+\rho_fU_f\otimes U_f|\leq C,\qquad |S_g|=|\rho_g\Theta_g+\rho_gU_g\otimes U_g|\leq C.
            \end{split}
        \end{align*}
        Using these, we control the difference between the stress tensors as follows:
		\begin{align*}
		    \begin{split}
		      |\Theta_f-\Theta_g|
			&\le
			\left|
			\frac{S_f}{\rho_f}
			-
			\frac{S_g}{\rho_g}
			\right|
			+
			|U_f\otimes U_f-U_g\otimes U_g|
			\cr
			&\le
			\frac1{\rho_f}|S_f-S_g|
			+
			\frac{|S_g|}{\rho_f\rho_g}|\rho_f-\rho_g|
			+
			(|U_f|+|U_g|)|U_f-U_g|
			\cr
			&\le
			C\|f-g\|_{L^1_2}. 
		    \end{split}
		\end{align*}
		Finally, the difference estimate for the temperature tensor comes from
        \begin{align*}
            \begin{split}
                |\mathcal{T}_{-1/2}[f]-\mathcal{T}_{-1/2}[g]|=\frac{1}{2}|\operatorname{tr}(\Theta_f-\Theta_g)I-(\Theta_f-\Theta_g)|\leq C|\Theta_f-\Theta_g|\leq C\|f-g\|_{L^1_2}.
            \end{split}
        \end{align*}
		This completes the proof.
	\end{proof}
	
	\begin{proposition}
		\label{prop:critical-maxwellian-L12-lipschitz}
		Let $q>5$.  Suppose that $f,g\ge0$ satisfy
        \begin{align*}
            \begin{split}
                f,g\in L^1_2(\mathbb R^3),
		\qquad
		\rho_f,\rho_g\ge \rho_*>0,
		\qquad
		\|f\|_{L^\infty_q}+\|g\|_{L^\infty_q}<\infty.   
            \end{split}
        \end{align*}
		Then, we have
        \begin{align*}
            \begin{split}
                \|\mathcal M_{-1/2}[f]-\mathcal M_{-1/2}[g]\|_{L^1_2}\le C\|f-g\|_{L^1_2},    
            \end{split}
        \end{align*}
		where $C=C\bigl(q,\rho_*,\max(\|f\|_{L^{\infty}_q},\|g\|_{L^{\infty}_q})\bigr)$.
	\end{proposition}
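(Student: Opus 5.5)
We interpolate linearly between the two Maxwellians. By Proposition~\ref{prop:endpoint-temperature-lower} applied to $f$ and to $g$, both parameter triples satisfy
\begin{align*}
\rho_*\le\rho\le c_u,\qquad |U|\le c_u,\qquad c_lI\le\mathcal T_{-1/2}\le c_uI,
\end{align*}
with $c_l,c_u$ depending only on $q,\rho_*$ and $\max(\|f\|_{L^\infty_q},\|g\|_{L^\infty_q})$. For $\tau\in[0,1]$ we set
\begin{align*}
\rho_\tau=(1-\tau)\rho_g+\tau\rho_f,\qquad U_\tau=(1-\tau)U_g+\tau U_f,\qquad \mathcal T_\tau=(1-\tau)\mathcal T_{-1/2}[g]+\tau\mathcal T_{-1/2}[f].
\end{align*}
Since the set of matrices with $c_lI\le A\le c_uI$ is convex, every $\mathcal T_\tau$ satisfies the same two-sided bound. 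Let $\mathcal M(\tau)$ be the Gaussian with parameters $(\rho_\tau,U_\tau,\mathcal T_\tau)$. Then
\begin{align*}
\mathcal M_{-1/2}[f]-\mathcal M_{-1/2}[g]=\int_0^1\frac{d}{d\tau}\mathcal M(\tau)\,d\tau .
\end{align*}

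Next we differentiate. By the chain rule, $\frac{d}{d\tau}\mathcal M(\tau)$ is $\mathcal M(\tau)$ times a sum of three terms. The first is $(\rho_f-\rho_g)/\rho_\tau$. The second is $(U_f-U_g)\cdot\mathcal T_\tau^{-1}(v-U_\tau)$. The third is
\begin{align*}
-\tfrac12\operatorname{tr}\bigl(\mathcal T_\tau^{-1}\Delta\mathcal T\bigr)+\tfrac12(v-U_\tau)^T\mathcal T_\tau^{-1}\Delta\mathcal T\,\mathcal T_\tau^{-1}(v-U_\tau),
\end{align*}
where $\Delta\mathcal T=\mathcal T_{-1/2}[f]-\mathcal T_{-1/2}[g]$. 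This uses $\frac{d}{d\tau}\log\det\mathcal T_\tau=\operatorname{tr}(\mathcal T_\tau^{-1}\Delta\mathcal T)$ and $\frac{d}{d\tau}\mathcal T_\tau^{-1}=-\mathcal T_\tau^{-1}\Delta\mathcal T\,\mathcal T_\tau^{-1}$. Using $|\mathcal T_\tau^{-1}|\le C c_l^{-1}$ and $\rho_\tau\ge\rho_*$, we obtain the pointwise bound
\begin{align*}
\Bigl|\tfrac{d}{d\tau}\mathcal M(\tau)\Bigr|\le C\bigl(|\rho_f-\rho_g|+|U_f-U_g|+|\Delta\mathcal T|\bigr)(1+|v-U_\tau|^2)\,\mathcal M(\tau)(v).
\end{align*}

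We then integrate against $\langle v\rangle^2$. The upper bounds are uniform in $\tau$: $\rho_\tau\le c_u$ and $|U_\tau|\le c_u$, together with $\det\mathcal T_\tau\ge c_l^3$ and $\mathcal T_\tau^{-1}\ge c_u^{-1}I$. Hence
\begin{align*}
\mathcal M(\tau)(v)\le C\exp\bigl(-|v-U_\tau|^2/(2c_u)\bigr).
\end{align*}
It follows that $\int\langle v\rangle^2(1+|v-U_\tau|^2)\mathcal M(\tau)\,dv\le C$ uniformly in $\tau$; for instance, write $\langle v\rangle^2\le C(1+|U_\tau|^2+|v-U_\tau|^2)$ and compute the Gaussian moments. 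By Fubini,
\begin{align*}
\|\mathcal M_{-1/2}[f]-\mathcal M_{-1/2}[g]\|_{L^1_2}\le C\bigl(|\rho_f-\rho_g|+|U_f-U_g|+|\Delta\mathcal T|\bigr).
\end{align*}
Lemma~\ref{lem:macro-lipschitz-L12} bounds the right-hand side by $C\|f-g\|_{L^1_2}$, with $C$ depending only on $q,\rho_*$ and $\max(\|f\|_{L^\infty_q},\|g\|_{L^\infty_q})$.

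The only genuinely delicate point is that every constant must be uniform in $\tau$ and free of eigenvalue ratios. This is exactly where the critical case differs from the noncritical one. Here it is handled by the two-sided ellipticity of Proposition~\ref{prop:endpoint-temperature-lower} together with convexity of the interpolation. Without this, $\mathcal T_\tau^{-1}$ and $(\det\mathcal T_\tau)^{-1/2}$ could blow up, and the Gaussian moments in the last step would not be controlled.
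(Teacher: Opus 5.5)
Your proposal is correct and follows essentially the same route as the paper. You linearly interpolate $(\rho,U,\mathcal T_{-1/2})$ with uniform bounds from Proposition~\ref{prop:endpoint-temperature-lower} and convexity, expand $\frac{d}{d\tau}\mathcal M(\tau)$ into the same three chain-rule terms, bound the weighted Gaussian moments uniformly in $\tau$, and close with Lemma~\ref{lem:macro-lipschitz-L12}; the only cosmetic difference is that you dominate $\mathcal M(\tau)$ pointwise by an isotropic Gaussian of variance $c_u$, whereas the paper uses the change of variables $z=\mathcal T_\eta^{-1/2}(v-U_\eta)$, and both give the same uniform constants.
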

	
	\begin{proof}
		For simplicity of notation, we set
        \begin{align*}
            \begin{split}
                \mathcal T_f:=\mathcal T_{-1/2}[f],
		\qquad
		\mathcal T_g:=\mathcal T_{-1/2}[g].   
            \end{split}
        \end{align*}
		Now we define the transitional macroscopic fields for $\eta\in[0,1]$
        \begin{align*}
            \begin{split}
                \rho_\eta:=\eta\rho_f+(1-\eta)\rho_g,\quad U_\eta:=\eta U_f+(1-\eta)U_g,\quad \mathcal T_\eta:=\eta\mathcal T_f+(1-\eta)\mathcal T_g.
            \end{split}
        \end{align*}
        By Proposition~\ref{prop:endpoint-temperature-lower}, every convex combination of these macroscopic fields shares
		the same type of bounds. That is,
        \begin{align*}
            \begin{split}
                \rho_\eta\geq \rho_*>0, \qquad |U_\eta|\leq c_u,\qquad c_lI\leq \mathcal{T}_\eta\leq c_uI.
            \end{split}
        \end{align*}
		We define the transitional Gaussian $\mathcal{M}(\eta)$ corresponding to the transitional macroscopic fields
        \begin{align*}
            \begin{split}
                \mathcal M(\eta)(v)
		:=
		\frac{\rho_\eta}{\sqrt{\det(2\pi\mathcal T_\eta)}}
		\exp\left(
		-\frac12
		(v-U_\eta)^T\mathcal T_\eta^{-1}(v-U_\eta)
		\right).   
            \end{split}
        \end{align*}
		Then, Taylor's theorem gives
        \begin{align*}
            \begin{split}
                \mathcal{M}_{-1/2}[f]-\mathcal{M}_{-1/2}[g]=\mathcal{M}(1)-\mathcal{M}(0)=\int^1_0 \frac{d}{d\eta}\mathcal{M}(\eta)d\eta.
            \end{split}
        \end{align*}
        By direct calculation, the chain rule leads to
        \begin{align}\label{eq:eta-derivative-critical}
            \begin{split}
                \frac{d}{d\eta}\mathcal{M}(\eta)&=\frac{\partial \rho_\eta}{\partial \eta}\frac{\partial\mathcal{M}(\eta)}{\partial \rho_\eta}+\frac{\partial U_\eta}{\partial \eta}\cdot \frac{\partial\mathcal{M}(\eta)}{\partial U_\eta}+\frac{\partial \mathcal{T}_\eta}{\partial \eta}:\frac{\partial\mathcal{M}(\eta)}{\partial \mathcal{T}_\eta}\cr
                &=(\rho_f-\rho_g)\frac{\mathcal{M}(\eta)}{\rho_\eta}\cr
                &+\mathcal{M}(\eta)(U_f-U_g)^{T}\mathcal{T}^{-1}_\eta (v-U_\eta)\cr
                &+\frac{1}{2}\mathcal{M}(\eta)\bigl[(v-U_\eta)^{T}\mathcal{T}^{-1}_{\eta}(\mathcal{T}_f-\mathcal{T}_g)\mathcal{T}^{-1}_{\eta}(v-U_\eta)-\operatorname{tr}(\mathcal{T}^{-1}_\eta (\mathcal{T}_f-\mathcal{T}_g)) \bigr]\cr
                &\equiv I_1 + I_2 + I_3.
            \end{split}
        \end{align}
        We now estimate the three terms in
		\eqref{eq:eta-derivative-critical} in $L^1_2$.  We first record a
		uniform Gaussian integral estimate. Set
        \begin{align*}
            \begin{split}
                z:=\mathcal T_\eta^{-1/2}(v-U_\eta),
		\qquad
		v=U_\eta+\mathcal T_\eta^{1/2}z.    
            \end{split}
        \end{align*}
		Then, we have
        \begin{align*}
            \begin{split}
                (v-U_\eta)^T\mathcal T_\eta^{-1}(v-U_\eta)=|z|^2,    
            \end{split}
        \end{align*}
        and
        \begin{align*}
            \begin{split}
                \mathcal{M}(\eta)(v)dv=\rho_\eta(2\pi)^{-\frac{3}{2}}e^{-\frac{|z|^2}{2}}dz.
            \end{split}
        \end{align*}
		Moreover, since $|U_\eta|\le c_u$ and
		$c_lI\leq \mathcal{T}_\eta\le c_uI$, we see
        \begin{align*}
            \begin{split}
                \langle v\rangle^2
		=
		1+|U_\eta+\mathcal T_\eta^{1/2}z|^2
		\le
		C(1+|z|^2),    
            \end{split}
        \end{align*}
        and
        \begin{align*}
            \begin{split}
                |\mathcal{T}_\eta^{-1}(v-U_\eta)|
		=
		|\mathcal{T}_\eta^{-1/2}z|
		\le
		C|z|.   
            \end{split}
        \end{align*}
		For $I_1$, we have
        \begin{align*}
            \begin{split}
               \int_{\mathbb{R}^3}\left\langle v\right\rangle^2|I_1|dv\leq C|\rho_f-\rho_g|\int_{\mathbb{R}^3}(1+|z|^2)e^{-\frac{|z|^2}{2}}dz\leq C|\rho_f-\rho_g|.
            \end{split}
        \end{align*}
        For $I_2$, we compute
        \begin{align*}
            \begin{split}
                \int_{\mathbb{R}^3}\left\langle v\right\rangle^2|I_2|dv\leq C|U_f-U_g|\int_{\mathbb{R}^3}\rho_\eta|z|(1+|z|^2)e^{-\frac{|z|^2}{2}}dz\leq C|U_f-U_g|.     
            \end{split}
        \end{align*}
        Here we used $\rho_\eta\leq c_u$ by Proposition~\ref{prop:endpoint-temperature-lower}.\\
        To obtain the estimate for $I_3$, we additionally observe the following via $\mathcal{T}_\eta\geq c_l I$:
        \begin{align*}
            \begin{split}
                |\operatorname{tr}(\mathcal{T}^{-1}_\eta(\mathcal{T}_f-\mathcal{T}_g))|\leq \left(\sum_{i,j}|(\mathcal{T}^{-1}_\eta)_{ij}|^2 \right)^{\frac{1}{2}}\left(\sum_{i,j}|(\mathcal{T}_f-\mathcal{T}_g)_{ij}|^2\right)^{\frac{1}{2}}\leq C|\mathcal{T}_f-\mathcal{T}_g|,
            \end{split}
        \end{align*}
       and
        \begin{align*}
            \begin{split}
                |(v-U_\eta)^{T}\mathcal{T}^{-1}_{\eta}(\mathcal{T}_f-\mathcal{T}_g)\mathcal{T}^{-1}_{\eta}(v-U_\eta)|=|z^T\mathcal{T}^{-1/2}_\eta(\mathcal{T}_f-\mathcal{T}_g)\mathcal{T}^{-1/2}_\eta z|\leq C|z|^2|\mathcal{T}_f-\mathcal{T}_g|.    
            \end{split}
        \end{align*}
        Using these, we have
        \begin{align*}
            \begin{split}
                \int_{\mathbb{R}^3}\left\langle v\right\rangle^2|I_3|dv&\leq C|\mathcal{T}_f-\mathcal{T}_g|\int_{\mathbb{R}^3}(1+|z|^2)^2e^{-\frac{|z|^2}{2}}dz\leq C|\mathcal{T}_f-\mathcal{T}_g|.  
            \end{split}
        \end{align*}
        Combining the estimates for $I_1$, $I_2$, and $I_3$, we conclude
		\begin{align*}
		    \begin{split}
		      \left\|\frac{d}{d\eta}\mathcal M(\eta)\right\|_{L^1_2}\leq C\left(|\rho_f-\rho_g|+|U_f-U_g|+|\mathcal T_f-\mathcal{T}_g|\right).   
		    \end{split}
		\end{align*}
		Returning to Taylor's formula, Lemma~\ref{lem:macro-lipschitz-L12} gives the desired result as follows:
        \begin{align*}
            \begin{split}
                \|\mathcal M_{-1/2}[f]-\mathcal M_{-1/2}[g]\|_{L^1_2}\leq \int^1_0  \left\|
			\frac{d}{d\eta}\mathcal M(\eta)
			\right\|_{L^1_2}d\eta\leq C\|f-g\|_{L^1_2}. 
            \end{split}
        \end{align*}
	\end{proof}
	\section{Construction of the mild solution}
	\label{sec:endpoint-mild-solutions}
	
	In this section, we indicate how the estimates obtained above give the standard
	mild-solution construction. The existence and uniqueness of a mild solution can be established in both the periodic and whole space settings under the corresponding assumptions on the initial data. We present the proof in $\mathbb{R}^3_x\times\mathbb{R}^3_v$, where the treatment of spatial decay requires additional care. The periodic case follows by a similar argument, with the spatially weighted estimates omitted.
    
    First, we design the solution space $\Omega$ with its metric $d$ as follows:  
    \begin{align*}
        \begin{split}
            \Omega=\{f\in C\bigl([0,T_*];L^1_2(\mathbb{R}^6) \bigr)\,:\,f\text{ satisfies }(A_1)-(A_3) \},\quad d(f,g)=\sup_{0\leq t\leq T_*}\|f-g\|_{L^1_2},
        \end{split}
    \end{align*}
    where conditions $(A_1)-(A_3)$ are given by
    \begin{itemize}
        \item $(A_1):$ $f$ is nonnegative.
        \item $(A_2):$ There exists $C_i>0\;(i=1,2,3)$ such that
        \begin{align*}
            \begin{split}
                \|f(t)\|_{L^{\infty}_{q+r}}&\leq e^{C_1t}\|f_0\|_{L^{\infty}_{q+r}},\cr
                \|f(t)\|_{L^{\infty}_{r,q}}&\leq C_2 e^{C_3 t}\{\|f_0\|_{L^{\infty}_{q+r}}+\|f_0\|_{L^{\infty}_{r,q}} \},
            \end{split}
        \end{align*}
        where $C_{i}\;(i=1,2,3)$ are defined by
        \begin{align*}
            \begin{split}
                C_1=C_{\mathcal{M}}-1,\qquad C_2=C_r(1+T_*)^r,\qquad C_3=2C_{\mathcal{M}}C_2-1.
            \end{split}
        \end{align*}
        \item $(A_3):$ The macroscopic fields satisfy the following:
         \begin{align*}
            \begin{split}
                &\rho(t,x)\geq e^{-T_*}\rho_0\langle x\rangle^{-r},\cr
                &\|\rho(t)\|_{L^{\infty}}+\|U(t)\|_{L^{\infty}}+\|\operatorname{tr}\Theta\|_{L^{\infty}}\leq c_u,\cr
                &c_lI\leq \mathcal{T}_{-1/2}\leq c_uI,
            \end{split}
        \end{align*}
        for some positive constants $c_l$ and $c_u$ depending on $q$, $r$, $\rho_0$, $T_*$, $\|f_0\|_{L^{\infty}_{q+r}}$, and  $\|f_0\|_{L^{\infty}_{r,q}}$. 
    \end{itemize}

    We construct the solution to \eqref{eq:endpoint-esbgk-eq} in $(\Omega,d)$ as the limit function of the sequence $\{f^n\}$ defined by the Picard-type iteration for $n\geq 1$ as follows:
    \begin{align}\label{eq:ESBGK picard eq}
        \begin{split}
            &\partial_t f^{n+1}+v\cdot \nabla_x f^{n+1}=\mathcal{M}_{-1/2}[f^n]-f^{n+1},\cr
            &\partial_t f^1+v\cdot \nabla_x f^1+f^1=0,\qquad f^1(0)=f_0,\cr
            &f^{n+1}(0)=f_0.
        \end{split}
    \end{align}
    To prove Theorem~\ref{thm:endpoint-mild-solution}, we plan to show that $f^n\in\Omega$ for all $n$ and $\{f^n \}$ is a Cauchy sequence in $(\Omega,d)$. Throughout this section, we denote the macroscopic fields associated with $f^n$ by $(\rho^n,U^n,T^n,\Theta^n,\mathcal{T}^n)$ to avoid confusion.
    \begin{proposition}\label{prop:mild sol invariant}
        Assume that $f_0$ satisfies the hypothesis of Theorem~\ref{thm:endpoint-mild-solution}. Let $\{f^n\}$ be a solution to \eqref{eq:ESBGK picard eq}. Then, $f^n$ belongs to $\Omega$ for all $n$.   
    \end{proposition}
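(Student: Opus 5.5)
We argue by induction on $n$. For $n=1$, $f^1(t,x,v)=e^{-t}f_0(x-vt,v)$, so nonnegativity is immediate. The bound $\|f^1(t)\|_{L^\infty_{q+r}}\le e^{-t}\|f_0\|_{L^\infty_{q+r}}$ also follows at once. For the mixed norm we use $\langle x\rangle\le C\langle x-vt\rangle\langle v\rangle(1+T_*)$, which gives
\begin{align*}
\langle x\rangle^r\langle v\rangle^q f_0(x-vt,v)\le C_r(1+T_*)^r\bigl(\langle x-vt\rangle^r\langle v\rangle^q+\langle v\rangle^{q+r}\bigr)f_0(x-vt,v).
\end{align*}
This is exactly where $C_2=C_r(1+T_*)^r$ comes from. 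The density lower bound $\rho^1\ge e^{-T_*}\rho_0\langle x\rangle^{-r}$ follows from \eqref{eq:free-transport-nonvacuum-endpoint}. Continuity in time with values in $L^1_2$ is standard for free transport of $L^1_2$ data.

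For the induction step, assume $f^n\in\Omega$ and write the mild formula
\begin{align*}
f^{n+1}(t)=e^{-t}f_0(x-vt,v)+\int_0^te^{-(t-s)}\mathcal M_{-1/2}[f^n](s,x-v(t-s),v)\,ds .
\end{align*}
Nonnegativity is clear. Dropping the nonnegative integral gives the density bound $\rho^{n+1}\ge e^{-t}\int f_0(x-vt,v)dv\ge e^{-T_*}\rho_0\langle x\rangle^{-r}$.

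For the $L^\infty_{q+r}$ bound we apply Lemma~\ref{lem:critical-ES-Maxwellian-PP-style} with exponent $q+r>5$ pointwise in $(s,x)$:
\begin{align*}
\|\mathcal M_{-1/2}[f^n](s)\|_{L^\infty_{q+r}}\le C_{\mathcal M}\|f^n(s)\|_{L^\infty_{q+r}}\le C_{\mathcal M}e^{C_1s}\|f_0\|_{L^\infty_{q+r}}.
\end{align*}
Here $C_{\mathcal M}$ is taken as the constant for the exponent $q+r$, and we enlarge it if needed so that it also works for $q$. Inserting this bound gives
\begin{align*}
\|f^{n+1}(t)\|_{L^\infty_{q+r}}\le e^{-t}\|f_0\|+C_{\mathcal M}e^{-t}\frac{e^{(C_1+1)t}-1}{C_1+1}\|f_0\|=e^{C_1t}\|f_0\|_{L^\infty_{q+r}},
\end{align*}
using $C_1+1=C_{\mathcal M}$. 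This is consistent, which is why the recursion closes with this choice of $C_1$.

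For the mixed norm $L^\infty_{r,q}$ we bound $\langle x\rangle^r$ by $C_2\bigl(\langle x-v(t-s)\rangle^r+\langle v\rangle^r\bigr)$ inside the integral. The $\langle v\rangle^r$ piece is controlled by the $L^\infty_{q+r}$ bound. For the $\langle y\rangle^r$ piece, with $y=x-v(t-s)$, we apply Lemma~\ref{lem:critical-ES-Maxwellian-PP-style} at the point $y$ with weight $q$, multiply by $\langle y\rangle^r$, and use $\langle y\rangle^r\|f^n(s,y,\cdot)\|_{L^\infty_q}\le\|f^n(s)\|_{L^\infty_{r,q}}$. Both contributions are then $\le C_{\mathcal M}C_2e^{C_3s}(\|f_0\|_{L^\infty_{q+r}}+\|f_0\|_{L^\infty_{r,q}})$, up to harmless constant adjustments, since $C_1\le C_3$. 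Integrating against $e^{-(t-s)}$ and using $C_3+1=2C_{\mathcal M}C_2$ closes the bound with the same $C_2,C_3$. This constant bookkeeping is where most of the care is needed, and if it fails to close exactly, we would enlarge $C_2$ and $C_3$ by fixed factors depending only on $q,r,T_*$.

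Finally, for $(A_3)$ we apply Proposition~\ref{prop:endpoint-temperature-lower} at each $(t,x)$ to $f^{n+1}(t,x,\cdot)$. The main obstacle is that the density lower bound $\rho_*=e^{-T_*}\rho_0\langle x\rangle^{-r}$ degenerates as $|x|\to\infty$, so uniformity in $x$ is not automatic. We handle this with the spatial weight. From \eqref{item:B2}, $\rho(|U|^2+\operatorname{tr}\Theta)^{(q-3)/2}\le C_q\|f(t,x,\cdot)\|_{L^\infty_q}\le C\langle x\rangle^{-r}\|f\|_{L^\infty_{r,q}}$. Dividing by $\rho\ge e^{-T_*}\rho_0\langle x\rangle^{-r}$, the $\langle x\rangle^{-r}$ factors cancel, which gives a uniform upper bound $c_u$ on $|U|$ and $\operatorname{tr}\Theta$; $\rho$ itself is bounded by $C\|f\|_{L^\infty_q}$. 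Similarly, \eqref{item:B1} gives $\det\Theta\ge(\rho/C\|f\|_{L^\infty})^2\ge(e^{-T_*}\rho_0/C\|f\|_{L^\infty_{r,q}})^2$, again uniformly in $x$. The eigenvalue argument of Proposition~\ref{prop:endpoint-temperature-lower} then yields uniform $c_l,c_u$ depending only on $q,r,\rho_0,T_*$ and the two norms of $f_0$. This is precisely why the weighted norm $L^\infty_{r,q}$ and matching decay rate $r$ appear in the hypotheses. Continuity of $f^{n+1}$ in $C([0,T_*];L^1_2)$ follows from the mild formula, since $\mathcal M_{-1/2}[f^n]\in L^\infty(0,T_*;L^1_2)$ by the moment bounds. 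This completes the induction.
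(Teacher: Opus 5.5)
Your proposal is correct and follows the paper's proof essentially step for step: induction through the mild formula, Lemma~\ref{lem:critical-ES-Maxwellian-PP-style} at weight $q+r$ with $C_1=C_{\mathcal M}-1$, the splitting $\langle x\rangle^r\le C_2(\langle x-v(t-s)\rangle^r+\langle v\rangle^r)$, and the cancellation of $\langle x\rangle^{-r}$ in \eqref{item:B1}--\eqref{item:B2} to get $x$-uniform $c_l,c_u$. The splitting closes exactly with $C_3+1=2C_{\mathcal M}C_2$, so no enlargement of constants is needed. One small slip: the sum bound you display in the base case follows from $\langle x\rangle\le\langle x-vt\rangle+T_*\langle v\rangle$, not from the product inequality you cite, since the product form only yields the weight $\langle x-vt\rangle^r\langle v\rangle^{q+r}$, which the hypotheses on $f_0$ do not control.
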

    \begin{proof}
        We apply the inductive argument. To do this, we assume $f^n\in\Omega$.\\
        $\bullet$ \textbf{Condition $(A_1)$:} We integrate \eqref{eq:ESBGK picard eq} along the characteristic to get
        \begin{align}\label{eq:Mild form picard}
            \begin{split}
                f^{n+1}(t,x,v)=e^{-t}f_0(x-vt,v)+\int^{t}_0 e^{-(t-s)}\mathcal{M}_{-1/2}[f^n](s,x-(t-s)v,v)ds.
            \end{split}
        \end{align}
        Then, the non-negativity of $f^{n+1}$ directly follows from $f_0\geq 0$ and $f^n\geq 0$.\\
        $\bullet$ \textbf{Condition $(A_2)$:} Multiplying $\langle v\rangle^{q+r}$ by \eqref{eq:Mild form picard}, we see
        \begin{align}\label{eq:condition A2 invariant 1}
            \begin{split}
             |\langle v\rangle^{q+r}f^{n+1}(t,x,v)|&\leq e^{-t}\langle v\rangle^{q+r}|f_0(x-vt,v)|\cr
             &+\int^t_0 e^{-(t-s)}\langle v\rangle^{q+r}|\mathcal{M}_{-1/2}[f^n](s,x-v(t-s),v)|ds.    
            \end{split}
        \end{align}
        Then, Lemma~\ref{lem:critical-ES-Maxwellian-PP-style} leads to
        \begin{align*}
            \begin{split}
                \int^t_0 e^{-(t-s)}\langle v\rangle^{q+r}|\mathcal{M}_{-1/2}[f^n](s,x-v(t-s),v)|ds&\leq \int^t_0 e^{-(t-s)}\|\mathcal{M}_{-1/2}[f^n](s)\|_{L^{\infty}_{q+r}}ds\cr
                &\leq C_{\mathcal{M}}e^{-t}\int^t_0 e^{s}\|f^n(s)\|_{L^{\infty}_{q+r}}ds\cr
                &\leq  C_{\mathcal{M}} e^{-t}\|f_0\|_{L^{\infty}_{q+r}}\int^t_0 e^{(C_1+1)s}ds\cr
                &=\frac{ C_{\mathcal{M}}}{C_1+1}(e^{C_1t}-e^{-t})\|f_0\|_{L^{\infty}_{q+r}}.
            \end{split}
        \end{align*}
        Inserting this estimate into \eqref{eq:condition A2 invariant 1}, we get
        \begin{align*}
            \begin{split}
                |\langle v\rangle^{q+r}f^{n+1}(t,x,v)|\leq e^{-t}\|f_0\|_{L^{\infty}_{q+r}}+\frac{ C_{\mathcal{M}}}{C_1+1}(e^{C_1t}-e^{-t})\|f_0\|_{L^{\infty}_{q+r}}\leq e^{C_1t}\|f_0\|_{L^{\infty}_{q+r}},
            \end{split}
        \end{align*}
        where we used $\langle v\rangle^{q+r}|f_0(x-vt,v)|\leq \|f_0\|_{L^{\infty}_{q+r}}$ and choice of $C_1=C_{\mathcal{M}}-1$. Then, we take the supremum in $(x,v)\in\mathbb{R}^6$ for both sides to get
        \begin{align*}
            \begin{split}
                \|f^{n+1}\|_{L^{\infty}_{q+r}}\leq e^{C_1t}\|f_0\|_{L^{\infty}_{q+r}}.   
            \end{split}
        \end{align*}
        Now we multiply $\langle x\rangle^r\langle v\rangle^q$ by \eqref{eq:Mild form picard}. Then, we have
        \begin{align}\label{eq:condition A2 invariant 2}
            \begin{split}
                |\langle x\rangle^r\langle v\rangle^{q}f^{n+1}(t,x,v)|&\leq e^{-t}\langle x\rangle^r\langle v\rangle^{q}|f_0(x-vt,v)|\cr
             &+\int^t_0 e^{-(t-s)}\langle x\rangle^r\langle v\rangle^{q}|\mathcal{M}_{-1/2}[f^n](s,x-v(t-s),v)|ds.    
            \end{split}
        \end{align}
        For $0\leq s\leq t\leq T_*$, the spatial weight can be computed by
        \begin{align*}
            \begin{split}
             \langle x\rangle^r&\leq C_r(\langle x-v(t-s)\rangle^r+(t-s)^r\langle v\rangle^r)\cr
             &\leq C_r(1+T_*)^r(\langle x-v(t-s)\rangle^r+\langle v\rangle^r)\cr
             &\equiv C_2 (\langle x-v(t-s)\rangle^r+\langle v\rangle^r). 
            \end{split}
        \end{align*}
        Using this, \eqref{eq:condition A2 invariant 2} can be written as
        \begin{align}\label{eq:condition A2 invariant 3}
            \begin{split}
                &|\langle x\rangle^r\langle v\rangle^{q}f^{n+1}(t,x,v)|\cr
                &\leq C_2e^{-t}(\langle x-vt\rangle^r\langle v\rangle^{q}+\langle v\rangle^{q+r})|f_0(x-vt,v)|\cr
             &+C_{2}\int^t_0 e^{-(t-s)}(\langle x-v(t-s)\rangle^r\langle v\rangle^{q}+\langle v\rangle^{q+r})|\mathcal{M}_{-1/2}[f^n](s,x-v(t-s),v)|ds.      
            \end{split}
        \end{align}
        The first term of RHS in \eqref{eq:condition A2 invariant 3} is controlled by
        \begin{align}\label{eq:1st term inv}
            \begin{split}
                (\text{The first term})\leq C_{2}e^{-t}(\|f_0\|_{L^{\infty}_{r,q}}+\|f_0\|_{L^{\infty}_{q+r}}).
            \end{split}
        \end{align}
        On the other hand, Lemma~\ref{lem:critical-ES-Maxwellian-PP-style} gives
        \begin{align*}
            \begin{split}
                \langle x\rangle^r \langle v\rangle^q|\mathcal{M}_{-1/2}[f](t,x,v)|\leq \langle x\rangle^r\sup_{v\in\mathbb{R}^3}\langle v\rangle^q|\mathcal{M}_{-1/2}[f](t,x,v)|\leq C_{\mathcal{M}}\langle x\rangle^r\sup_{v\in\mathbb{R}^3}|\langle v\rangle^qf(t,x,v)|.
            \end{split}
        \end{align*}
        Taking $\sup_x$ on both sides, we get
        \begin{align*}
            \begin{split}
                \|\mathcal{M}_{-1/2}[f](t)\|_{L^{\infty}_{r,q}}\leq C_{\mathcal{M}}\|f(t)\|_{L^{\infty}_{r,q}}.   
            \end{split}
        \end{align*}
        Therefore, the second term of the RHS in \eqref{eq:condition A2 invariant 3} is derived by
        \begin{align}\label{eq:2nd term inv}
            \begin{split}
                &(\text{The 2nd term})\cr
                &\leq C_{\mathcal{M}}C_2\int^{t}_0 e^{-(t-s)}(\|f^n(s)\|_{L^{\infty}_{r,q}}+\|f^n(s)\|_{L^{\infty}_{q+r}})ds\cr
                &\leq 2C_{\mathcal{M}}C^2_{2}e^{-t}(\|f_0\|_{L^{\infty}_{r,q}}+\|f_0\|_{L^{\infty}_{q+r}})\int^t_0 e^{(C_3+1)s}ds\cr
                &=\frac{2C_{\mathcal{M}}C^2_{2}}{C_3+1}(e^{C_3t}-e^{-t})(\|f_0\|_{L^{\infty}_{r,q}}+\|f_0\|_{L^{\infty}_{q+r}}),
            \end{split}
        \end{align}
        where we used $C_1\leq C_3$ and $C_2\geq 1$. Plugging \eqref{eq:1st term inv} and \eqref{eq:2nd term inv} into \eqref{eq:condition A2 invariant 3}, we obtain
        \begin{align*}
            \begin{split}
             \|f^{n+1}(t)\|_{L^{\infty}_{r,q}}\leq C_2e^{C_3t}(\|f_0\|_{L^{\infty}_{r,q}}+\|f_0\|_{L^{\infty}_{q+r}}).   
            \end{split}
        \end{align*}
    $\bullet$ \textbf{Condition $(A_3)$:} Recalling \eqref{eq:Mild form picard}, we see
    \begin{align*}
        \begin{split}
            \rho^{n+1}(t,x)= \int_{\mathbb{R}^3} f^{n+1}(t,x,v)dv\geq e^{-t}\int_{\mathbb{R}^3}f_0(x-vt,v)dv\geq e^{-T_*}\rho_0\langle x\rangle^{-r}.  
        \end{split}
    \end{align*}
    The uniform upper bound of density follows directly from
    \begin{align*}
        \begin{split}
            \rho^{n+1}(t,x)\leq \|f^{n+1}(t)\|_{L^{\infty}_q}\int_{\mathbb{R}^3}\langle v\rangle^{-q}dv\leq C_q\|f^{n+1}(t)\|_{L^{\infty}_{q+r}}\leq C_qe^{C_1T_*}\|f_0\|_{L^{\infty}_{q+r}}.
        \end{split}
    \end{align*}
    According to Proposition~\ref{prop:endpoint-temperature-lower}, we yield
    \begin{align*}
        \begin{split}
            |U^{n+1}|^2+|\operatorname{tr}\Theta^{n+1}|&\leq \left( \frac{C_q\sup_v|\langle v\rangle^qf^{n+1}|}{\rho_0 e^{-T_*}\langle x\rangle^{-r}}\right)^{\frac{2}{q-3}}\cr
            &\leq \left( \frac{C_q\sup_{x,v}|\langle x\rangle^{r}\langle v\rangle^qf^{n+1}|}{\rho_0 e^{-T_*}}\right)^{\frac{2}{q-3}}\cr
            &\leq \left( \frac{C_q\|f^{n+1}\|_{L^{\infty}_{r,q}}}{\rho_0 e^{-T_*}}\right)^{\frac{2}{q-3}}\cr
            &\leq \left( \frac{C_{q,r,T_*}(\|f_0\|_{L^{\infty}_{r,q}}+\|f_0\|_{L^{\infty}_{q+r}})}{\rho_0 e^{-T_*}}\right)^{\frac{2}{q-3}}. 
        \end{split}
    \end{align*}
    Therefore, there is a uniform constant $c_u>0$ which depends on $q$, $r$, $T_*$, $\rho_0$, and $f_0$. Now we apply Proposition~\ref{prop:endpoint-temperature-lower} again to obtain
    \begin{align*}
        \begin{split}
            \lambda_{\max}(\mathcal{T}^{n+1}_{-1/2})\leq \operatorname{tr}\Theta^{n+1}\leq c_u,
        \end{split}
    \end{align*}
    and
    \begin{align*}
        \begin{split}
            \lambda_{\min}(\mathcal{T}^{n+1}_{-1/2})&\geq \frac{e^{-2T_*}\rho^2_0\langle x\rangle^{-2r}}{Cc^2_u\sup_v|\langle v\rangle^qf^{n+1}(t)|^2}\cr
            &\geq \frac{e^{-2T_*}\rho^2_0}{Cc^2_u\sup_{x,v}|\langle x\rangle^r\langle v\rangle^qf^{n+1}(t)|^2}\cr
            &\geq \frac{e^{-2T_*}\rho^2_0}{Cc^2_u\|f^{n+1}\|^2_{L^{\infty}_{r,q}}}\cr
            &\geq \frac{e^{-2T_*}\rho^2_0}{C_{r,q,T_*,\rho_0,f_0}(\|f_0\|^2_{L^{\infty}_{r,q}}+\|f_0\|^2_{L^{\infty}_{q+r}})}.
        \end{split}
    \end{align*}
    These imply that there is a uniform constant $c_l>0$ such that
    \begin{align*}
        \begin{split}
            c_l I \leq \mathcal{T}^{n+1}_{-1/2}\leq c_u I.
        \end{split}
    \end{align*}

    It remains to show that $f^1\in\Omega$. We can write $f^1$ by an explicit formula 
    \begin{align*}
        \begin{split}
            f^1(t,x,v)=e^{-t}f_0(x-vt,v).
        \end{split}
    \end{align*}
    Thus, the non-negativity trivially follows. The weighted $L^{\infty}$ boundedness can be proved by
    \begin{align*}
        \begin{split}
            \langle v\rangle^{q+r}|f^1(t,x,v)|&\le e^{-t} \langle v\rangle^{q+r}|f_0(x-vt,v)|\leq \|f_0\|_{L^{\infty}_{q+r}},
        \end{split}
    \end{align*}
    and
    \begin{align*}
        \begin{split}
             \langle x\rangle^r\langle v\rangle^q|f^1(t,x,v)|&\le C_{r,T_*} (\langle x-vt\rangle^r\langle v\rangle^q+ \langle v\rangle^{q+r})|f_0(x-vt,v)|\leq C_2(\|f_0\|_{L^{\infty}_{r,q}}+\|f_0\|_{L^{\infty}_{q+r}}).   
        \end{split}
    \end{align*}
    Lastly, we have
    \begin{align*}
        \begin{split}
            \rho^1(t,x)=\int_{\mathbb{R}^3} f^1(t,x,v)dv=e^{-t}\int_{\mathbb{R}^3}f_0(x-vt,v)dv\geq e^{-T_*}\rho_0\langle x\rangle^{-r}.
        \end{split}
    \end{align*}
    Therefore, we derive the boundedness of the macroscopic fields associated with $f^1$ via an analogous argument in the inductive step.
    \end{proof}
    
    \begin{proposition}\label{prop:mild sol cauchy}
        Assume that $f_0$ satisfies the hypothesis of Theorem~\ref{thm:endpoint-mild-solution}. Let $\{f^n\}$ be a solution to \eqref{eq:ESBGK picard eq}. Then, $\{f^n\}$ is a Cauchy sequence.   
    \end{proposition}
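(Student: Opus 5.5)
The plan is to subtract consecutive mild formulations \eqref{eq:Mild form picard} and use the Lipschitz estimate of Proposition~\ref{prop:critical-maxwellian-L12-lipschitz} pointwise in $(s,y)$. For $n\ge1$,
\begin{align*}
f^{n+2}(t,x,v)-f^{n+1}(t,x,v)=\int_0^t e^{-(t-s)}\bigl(\mathcal M_{-1/2}[f^{n+1}]-\mathcal M_{-1/2}[f^{n}]\bigr)(s,x-v(t-s),v)\,ds.
\end{align*}
The initial-data term cancels. Multiply by $\langle v\rangle^2$ and integrate over $\mathbb R^6$. For fixed $s$ and $v$, the map $x\mapsto x-v(t-s)$ preserves Lebesgue measure. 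After Fubini this gives
\begin{align*}
\|f^{n+2}(t)-f^{n+1}(t)\|_{L^1_2}\le\int_0^t e^{-(t-s)}\int_{\mathbb R^3}\bigl\|\mathcal M_{-1/2}[f^{n+1}](s,y,\cdot)-\mathcal M_{-1/2}[f^{n}](s,y,\cdot)\bigr\|_{L^1_2(\mathbb R^3_v)}\,dy\,ds.
\end{align*}

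The key step, and the main obstacle, is applying Proposition~\ref{prop:critical-maxwellian-L12-lipschitz} with a constant independent of $y$. The lower bound on density in $(A_3)$ is only $e^{-T_*}\rho_0\langle y\rangle^{-r}$, which degenerates as $|y|\to\infty$. So the constant $C(q,\rho_*,\cdot)$ from Lemma~\ref{lem:macro-lipschitz-L12} cannot be used with a fixed $\rho_*$.

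I would avoid this by not re-deriving the constants from $\rho_*$. Instead I would reread the proofs of Lemma~\ref{lem:macro-lipschitz-L12} and Proposition~\ref{prop:critical-maxwellian-L12-lipschitz} and use the uniform bounds in $(A_3)$ directly. Since $f^n\in\Omega$ for all $n$ by Proposition~\ref{prop:mild sol invariant}, these are $|U|,\operatorname{tr}\Theta,\rho\le c_u$ and $c_lI\le\mathcal T_{-1/2}\le c_uI$, uniform in $(t,x,n)$. With these, the Gaussian integrals in $I_1,I_2,I_3$ are bounded by constants depending only on $c_l,c_u$.

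The remaining factors $1/\rho$ appear in the estimates for $|U_f-U_g|$ and $|\Theta_f-\Theta_g|$. I would handle them by estimating the differences of the unnormalized moments $\rho U$ and $\rho\Theta$ against $\|f-g\|_{L^1_2(v)}$. Writing $I_2$ in terms of $\rho_\eta(U_f-U_g)$, the density prefactor of the Gaussian cancels one $1/\rho$. Where a genuine $1/\rho$ survives, I would use the pointwise bound $\rho^{-1}\le e^{T_*}\rho_0^{-1}\langle y\rangle^r$. This is paired against the spatial weight available from $\|f^n\|_{L^\infty_{r,q}}$; the $\langle x\rangle^r$-weighted bound in $(A_2)$ is presumably included precisely to make this pairing possible.

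If the density factor cannot be fully absorbed, my fallback is to interpolate the difference $\|f-g\|_{L^1_2(v)}$ against the weighted $L^\infty_{r,q}$ control. The aim is the local Lipschitz bound
\begin{align*}
\|\mathcal M_{-1/2}[f^{n+1}](s,y)-\mathcal M_{-1/2}[f^{n}](s,y)\|_{L^1_2(v)}\le C\,\|f^{n+1}(s,y)-f^n(s,y)\|_{L^1_2(v)},
\end{align*}
with $C$ depending only on $q,r,T_*,\rho_0,f_0$.

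Granting this, integration in $y$ gives $\|f^{n+2}(t)-f^{n+1}(t)\|_{L^1_2}\le C\int_0^t\|f^{n+1}(s)-f^n(s)\|_{L^1_2}\,ds$. Iterating yields
\begin{align*}
\sup_{0\le t\le T_*}\|f^{n+1}(t)-f^n(t)\|_{L^1_2}\le \frac{(CT_*)^{n-1}}{(n-1)!}\sup_{0\le t\le T_*}\|f^2(t)-f^1(t)\|_{L^1_2}.
\end{align*}
The right-hand side is finite because $f^1,f^2\in C([0,T_*];L^1_2)$. Summing the factorially decaying series shows $\{f^n\}$ is Cauchy in $(\Omega,d)$.
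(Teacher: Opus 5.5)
Your proposal is correct. It shares the paper's skeleton: difference of mild forms, translation invariance in $x$, a Lipschitz bound pointwise in $(s,y)$ with constant uniform in $y$ and $n$, then the $(CT_*)^{n-1}/(n-1)!$ iteration. The crux step is handled differently.

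\textbf{The paper's route.} It never reopens Proposition~\ref{prop:critical-maxwellian-L12-lipschitz}. It uses the homogeneity $\mathcal M_{-1/2}[ah]=a\mathcal M_{-1/2}[h]$ for $a>0$ and applies that proposition unchanged to $\langle x\rangle^r f^n(s,x,\cdot)$ and $\langle x\rangle^r f^{n-1}(s,x,\cdot)$. These rescaled functions have densities at least $e^{-T_*}\rho_0$ and $L^\infty_q$ norms controlled by the $L^\infty_{r,q}$ bound in $(A_2)$. It then divides out $\langle x\rangle^r$. This is exactly your ``pairing'' of $\rho^{-1}\lesssim\langle y\rangle^{r}$ against the $\langle y\rangle^{-r}$ decay, packaged as a rescaling.

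\textbf{Your route.} Reworking the estimates from the $(A_3)$ bounds also closes, and in fact no genuine $1/\rho$ survives, so your fallbacks are unnecessary. Keep the Gaussian mass factor $\rho_\eta=\eta\rho_f+(1-\eta)\rho_g$ in $I_3$ as well as in $I_2$; the paper simply absorbs it via $\rho_\eta\le c_u$. Write $m_f=\rho_fU_f$ and $S_f=\int v\otimes v f\,dv$. Then use the identities $\rho_f(U_f-U_g)=(m_f-m_g)-U_g(\rho_f-\rho_g)$ and $\rho_f(\Theta_f-\Theta_g)=(S_f-S_g)-(\Theta_g+U_g\otimes U_g)(\rho_f-\rho_g)-\rho_f(U_f\otimes U_f-U_g\otimes U_g)$, together with the same identities with $f$ and $g$ swapped. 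Every term is then bounded by $C(c_l,c_u)\|f-g\|_{L^1_2(v)}$.

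\textbf{What each buys.} Your route gives a Lipschitz constant depending only on the $(A_3)$ bounds, with no density lower bound in the estimate itself; it enters only upstream, through $c_l$. The paper's route is shorter and reuses the proposition as a black box. It also makes clear that only the scale-invariant ratio $\|f\|_{L^\infty_q}/\rho$ matters, which stays bounded because numerator and denominator both decay like $\langle x\rangle^{-r}$.
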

    \begin{proof}
        Let $F^{n+1}=f^{n+1}-f^n$. Then, the difference between $(n+1)$-th step and $n$-th step for \eqref{eq:Mild form picard} is written as
        \begin{align*}
            \begin{split}
                &\partial_t F^{n+1}+v\cdot \nabla_x F^{n+1}+F^{n+1}=\mathcal{M}_{-1/2}[f^{n}]-\mathcal{M}_{-1/2}[f^{n-1}],\cr
                &F^{n+1}(0)=f^{n+1}(0)-f^n(0)=0.
            \end{split}
        \end{align*}
        Integrating along the characteristic, we have
        \begin{align*}
            \begin{split}
                F^{n+1}(t,x,v)=\int^t_0 e^{-(t-s)}(\mathcal{M}_{-1/2}[f^n]-\mathcal{M}_{-1/2}[f^{n-1}])(s,x-(t-s)v,v)ds.
            \end{split}
        \end{align*}
        We first establish a Lipschitz estimate with a constant independent of the spatial variable and the iteration index. For fixed $s\in[0,T_*]$ and almost every $x\in\mathbb R^3$, Proposition~\ref{prop:mild sol invariant} gives, for $k=n,n-1$,
        \begin{align*}
            \langle x\rangle^r\rho^k(s,x)
            &\ge e^{-T_*}\rho_0,\\
            \|\langle x\rangle^r f^k(s,x,\cdot)\|_{L^\infty_q}
            &\le C_2e^{C_3T_*}
            \left(
                \|f_0\|_{L^\infty_{r,q}}
                +\|f_0\|_{L^\infty_{q+r}}
            \right).
        \end{align*}
We therefore apply
Proposition~\ref{prop:critical-maxwellian-L12-lipschitz}
to $\langle x\rangle^r f^n(s,x,\cdot)$ and
$\langle x\rangle^r f^{n-1}(s,x,\cdot)$,
with a constant independent of $n$, $s$, and $x$.
Using $\mathcal M_{-1/2}[ah]=a\mathcal M_{-1/2}[h]$ for $a>0$ and canceling the common factor $\langle x\rangle^r$, we obtain
\begin{align}\label{eq:picard-uniform-lipschitz}
    \begin{split}
        &\int_{\mathbb R^3}\langle v\rangle^2
        \left|
            \mathcal M_{-1/2}[f^n](s,x,v)
            -\mathcal M_{-1/2}[f^{n-1}](s,x,v)
        \right|\,dv\\
        &\qquad\le
        C\int_{\mathbb R^3}\langle v\rangle^2
        |f^n(s,x,v)-f^{n-1}(s,x,v)|\,dv.
    \end{split}
\end{align}
The same argument applies to any two functions in $\Omega$, with the same constant. Taking absolute values, multiplying by $\langle v\rangle^2$, and integrating in $(x,v)$, we use spatial translation invariance and \eqref{eq:picard-uniform-lipschitz} to obtain
        \begin{align*}
            \begin{split}
                &\int_{\mathbb{R}^6}\langle v\rangle^2|F^{n+1}(t,x,v)|dxdv\cr
                &\leq \int^t_0 e^{-(t-s)}\int_{\mathbb{R}^3}\int_{\mathbb{R}^3}\langle v\rangle^2|(\mathcal{M}_{-1/2}[f^{n}]-\mathcal{M}_{-1/2}[f^{n-1}])(s,x-(t-s)v,v)|dvdxds\cr
                &\leq \int^t_0 \int_{\mathbb{R}^3}\int_{\mathbb{R}^3}\langle v\rangle^2|(\mathcal{M}_{-1/2}[f^{n}]-\mathcal{M}_{-1/2}[f^{n-1}])(s,x,v)|dvdxds\cr
                &\leq C\int^t_0\int_{\mathbb{R}^6}\langle v\rangle^2 |(f^n-f^{n-1})(s,x,v)| dxdvds\cr
                &\leq C\int^t_0\|F^{n}(s)\|_{L^1_2}ds,
            \end{split}
        \end{align*}
        where $C$ is some constant depending on $q$, $r$, $T_*$, $\rho_0$, and $f_0$. We iterate the above estimate to deduce
        \begin{align}\label{eq:iterate}
            \begin{split}
                \sup_{0\leq t\leq T_*}\|F^{n+1}(t)\|_{L^1_2}&\leq \frac{(CT_*)^{n-1}}{(n-1)!}\sup_{0\leq s
                \leq T_*}\|F^2(s)\|_{L^1_2}\cr
                &\leq \frac{(CT_*)^{n-1}}{(n-1)!}\sup_{0\leq s\leq T_*}(\|f^2(s)\|_{L^1_2}+\|f^1(s)\|_{L^1_2}).
            \end{split}
        \end{align}
        Since $f^1$ and $f^2$ belong to $\Omega$, for $i=1,2$, we have
        \begin{align*}
            \begin{split}
                \int_{\mathbb{R}^6}\langle v\rangle^2|f^i(t,x,v)|dxdv\leq \|f^i\|_{L^{\infty}_{r,q}}\int_{\mathbb{R}^6}\langle x \rangle^{-r}\langle v\rangle^{2-q}dxdv\leq C_{r,q}C_2e^{C_3T_*}(\|f_0\|_{L^{\infty}_{r,q}}+\|f_0\|_{L^{\infty}_{q+r}}).
            \end{split}
        \end{align*}
        Applying this to \eqref{eq:iterate}, we get
        \begin{align*}
            \begin{split}
                \sup_{0\leq t\leq T_*}\|F^{n+1}(t)\|_{L^1_2}\leq C_{q,r,T_*,\rho_0,f_0}\frac{(CT_*)^{n-1}}{(n-1)!}.
            \end{split}
        \end{align*}
        Therefore, as $m,n\rightarrow \infty$, we have
        \begin{align*}
            \begin{split}
                \sup_{0\leq t\leq T_*}\|(f^m-f^n)(t)\|_{L^1_2}\leq C_{q,r,T_*,\rho_0,f_0}\left(e^{CT_*}-\sum^{n-2}_{k=0}\frac{(CT_*)^k}{k!} \right)\rightarrow 0.
            \end{split}
        \end{align*}
        This completes the proof.
    \end{proof}

    \noindent\textbf{Proof of Theorem~\ref{thm:endpoint-mild-solution}}. By Proposition~\ref{prop:mild sol cauchy} and the completeness of $C([0,T_*];L^1_2(\mathbb R^6))$, there exists
$f\in C([0,T_*];L^1_2(\mathbb R^6))$ such that
\begin{align*}
    \sup_{0\le t\le T_*}
    \|f^n(t)-f(t)\|_{L^1_2}\longrightarrow0.
\end{align*}
In particular, $f(0)=f_0$.

Fix $t\in[0,T_*]$. Passing to a subsequence, we have
$f^n(t,x,v)\to f(t,x,v)$ for almost every $(x,v)$.
Hence nonnegativity and the weighted $L^\infty$ bounds pass to the limit.
Moreover, convergence in $L^1_2$ implies strong $L^1_x$ convergence
of the density, momentum, and second-moment matrix.
After passing to a further subsequence, these moments converge
for almost every $x$. The density lower bound therefore passes
to the limit. Since $\rho_f(t,x)>0$, the definitions of the
macroscopic fields imply
\begin{align*}
    U^n(t,x)\to U_f(t,x),\qquad
    \Theta^n(t,x)\to\Theta_f(t,x),\qquad
    \mathcal T^n(t,x)\to\mathcal T_{-1/2}[f](t,x)
\end{align*}
for almost every $x$. Thus the remaining bounds in $(A_3)$
also pass to the limit.
Since $t$ was arbitrary, we conclude that $f\in\Omega$ and that
\eqref{eq:endpoint-solution-Linfty-bound},
\eqref{eq:endpoint-solution-density-lower}, and
\eqref{eq:endpoint-solution-ellipticity} hold.

We now pass to the limit in the nonlinear term.
The spatially uniform Lipschitz estimate
\eqref{eq:picard-uniform-lipschitz} applies to any two functions
in $\Omega$. Integrating this estimate in $x$, we obtain
\begin{align*}
    \sup_{0\le t\le T_*}
    \|\mathcal M_{-1/2}[f^n](t)
      -\mathcal M_{-1/2}[f](t)\|_{L^1_2}\le
    C\sup_{0\le t\le T_*}
    \|f^n(t)-f(t)\|_{L^1_2}
    \longrightarrow0.
\end{align*}
Since free transport preserves the global $L^1_2$ norm,
we may pass to the limit in \eqref{eq:Mild form picard}
and obtain
\begin{align}\label{eq:endpoint-mild-form-final}
    \begin{split}
        f(t,x,v)
        =
        e^{-t}f_0(x-vt,v)+
        \int_0^t e^{-(t-s)}
        \mathcal M_{-1/2}[f](s,x-(t-s)v,v)\,ds.
    \end{split}
\end{align}
This identity holds in $L^1_2(\mathbb R^6)$ for every
$t\in[0,T_*]$, and hence for almost every $(x,v)$.
This proves existence.

We next prove uniqueness in $\Omega$.
Let $f,g\in\Omega$ be two mild solutions with the same initial
datum $f_0$. Subtracting their mild formulations, using spatial
translation invariance, and applying the same global Lipschitz
estimate, we obtain
\begin{align*}
    \|f(t)-g(t)\|_{L^1_2}
    \le
    C\int_0^t e^{-(t-s)}
    \|f(s)-g(s)\|_{L^1_2}\,ds
    \le
    C\int_0^t
    \|f(s)-g(s)\|_{L^1_2}\,ds.
\end{align*}
Gronwall's lemma yields $f=g$ on $[0,T_*]$.

It remains to verify the conservation laws and the entropy dissipation. We establish the conservation laws directly from the mild formulation. Writing $\mathcal T_f:=\mathcal T_{-1/2}[f]$, the Gaussian moment identities give
\begin{align*}
    \int_{\mathbb R^3}\mathcal M_{-1/2}[f](v)\,dv
    &=\rho_f,\\
    \int_{\mathbb R^3}v\mathcal M_{-1/2}[f](v)\,dv
    &=\rho_fU_f,\\
    \int_{\mathbb R^3}|v|^2\mathcal M_{-1/2}[f](v)\,dv
    &=\rho_f\bigl(|U_f|^2+\operatorname{tr}\mathcal T_f\bigr).
\end{align*}
On the other hand, $\mathcal{T}_{-1/2}$ satisfies
\begin{align*}
    \operatorname{tr}\mathcal T_f=
    \frac12
    \operatorname{tr}
    \bigl((\operatorname{tr}\Theta_f)I-\Theta_f\bigr)=
    \operatorname{tr}\Theta_f.
\end{align*}
Therefore,
\begin{align*}
    \int_{\mathbb R^3}|v|^2\mathcal M_{-1/2}[f](v)\,dv
    =
    \rho_f\bigl(|U_f|^2+\operatorname{tr}\Theta_f\bigr)
    =
    \int_{\mathbb R^3}|v|^2f(v)\,dv.
\end{align*}
Thus, we conclude
\begin{align*}
    \int_{\mathbb R^3}
   \begin{pmatrix}
				1\\ v\\ |v|^2
			\end{pmatrix}\mathcal M_{-1/2}[f](t,x,v)\,dv
    =
    \int_{\mathbb R^3}
     \begin{pmatrix}
				1\\ v\\ |v|^2
			\end{pmatrix}f(t,x,v)\,dv
\end{align*}
for every $t\in[0,T_*]$ and almost every $x$. Integrating \eqref{eq:endpoint-mild-form-final} and using the
moment identities above, we obtain
\begin{align*}
    \int_{\mathbb{R}^6}\begin{pmatrix}
				1\\ v\\ |v|^2
			\end{pmatrix}f(t,x,v)dxdv&=e^{-t}\int_{\mathbb{R}^6}\begin{pmatrix}
				1\\ v\\ |v|^2
			\end{pmatrix}f_0(x-vt,v)dxdv\cr
    &+\int_0^t\int_{\mathbb{R}^6} e^{-(t-s)}\begin{pmatrix}
				1\\ v\\ |v|^2
			\end{pmatrix}\mathcal{M}_{-1/2}[f](s,x-v(t-s),v)\,dxdvds\cr
    &=e^{-t}\int_{\mathbb{R}^6}\begin{pmatrix}
				1\\ v\\ |v|^2
			\end{pmatrix}f_0(x,v)dxdv\cr
    &+\int_0^t \int_{\mathbb{R}^6} e^{-(t-s)}\begin{pmatrix}
				1\\ v\\ |v|^2
			\end{pmatrix}f(s,x,v)\,dxdvds.
\end{align*}
Then, Gronwall's lemma proves
\eqref{eq:endpoint-conservation-laws}.

Finally, we prove the entropy dissipation
\eqref{eq:endpoint-entropy-dissipation}. Set
\begin{align*}
    \begin{split}
        \mathcal{H}(h):=\int_{\mathbb R^6}h\ln h\,dx\,dv.   
    \end{split}
\end{align*}
Then, the weighted bounds ensure absolute integrability. Let $\mathcal{G}_f$ be
\begin{align*}
    \begin{split}
        \mathcal{G}_f:=\frac{\rho_f}{\sqrt{\det (2\pi\Theta_f)}}\exp\left(-\frac{1}{2}(v-U_f)^{T}\Theta^{-1}_f (v-U_f) \right).
    \end{split}
\end{align*}
Since $f$ and $\mathcal{G}_f$ share the same moments, we have
\begin{align*}
    \begin{split}
        \int_{\mathbb{R}^3}f\ln\mathcal{G}_fdv=\int_{\mathbb{R}^3}\mathcal{G}_f\ln\mathcal{G}_fdv.
    \end{split}
\end{align*}
This gives 
\begin{align}\label{eq:entropy compare 1}
    \begin{split}
        \mathcal{H}(f)-\mathcal{H}(\mathcal{G}_f)&=\int_{\mathbb{R}^3}\left(\int_{\mathbb{R}^3} f\ln f dv-\int \mathcal{G}_f\ln\mathcal{G}_f dv\right)dx\cr
        &=\int_{\mathbb{R}^3}\int_{\mathbb{R}^3} f\ln\frac{f}{\mathcal{G}_f}dvdx\cr
        &=\int_{\mathbb{R}^3}\left(\int_{\mathbb{R}^3} \mathcal{G}_f z\ln z dv \right)dx\cr
        &=\int_{\mathbb{R}^3}\left(\int_{\mathbb{R}^3} \mathcal{G}_f (z\ln z-z+1) dv \right)dx\cr
        &\geq 0,
    \end{split}
\end{align}
where we used $z=f/\mathcal{G}_f$ and $z\ln z-z+1\geq 0$. On the other hand, we have
\begin{align}\label{eq:entropy compare 2}
    \begin{split}
        \mathcal{H}(\mathcal{M}_{-1/2}[f])-\mathcal{H}(\mathcal{G}_f)=-\int_{\mathbb{R}^3}\frac{\rho_f}{2}\ln\frac{\det \mathcal{T}_{-1/2}[f]}{\det \Theta_f}dx\leq 0,
    \end{split}
\end{align}
via $\det\mathcal T_{-1/2}[f]\ge\det\Theta_f$. Therefore, \eqref{eq:entropy compare 1} and \eqref{eq:entropy compare 2} yield $\mathcal{H}(\mathcal M_{-1/2}[f])\le \mathcal{H}(f)$. Applying Jensen's inequality to \eqref{eq:endpoint-mild-form-final} and using the entropy comparison above, we obtain  
\begin{align*}
    \begin{split}
        \mathcal{H}(f(t))&\leq e^{-t}\mathcal{H}(f_0)+\int^t_0 e^{-(t-s)}\mathcal{H}(\mathcal{M}_{-1/2}[f](s))ds\cr
        &\leq e^{-t}\mathcal{H}(f_0)+\int^t_0 e^{-(t-s)}\mathcal{H}(f(s))ds.
    \end{split}
\end{align*} 
Then Gronwall's lemma leads to \eqref{eq:endpoint-entropy-dissipation}. This completes the proof.

\section{Global weak solution}\label{sec:endpoint-weak-convergence}
Global weak solutions to the ES-BGK model at the critical value $\nu=-1/2$ have already been obtained in \cite{son2022cauchy}. Here we also derive a global weak existence result by using the mild solutions constructed in Section~\ref{sec:endpoint-mild-solutions} as approximate solutions. Throughout this section, we define $\mathcal{M}_{-1/2}[f]=0$ on the vacuum set $\{\rho_f=0 \}$.
\begin{definition}
    For $T_*>0$, we say that $f\in L^1([0,T_*]\times\mathbb{R}^3\times\mathbb{R}^3)$ is a weak solution to \eqref{eq:endpoint-esbgk-eq} if $f\ge 0$ satisfies  
    \begin{align*}
        \begin{split}
            &-\int_{\mathbb{R}^3\times\mathbb{R}^3} f_0\phi_0dxdv - \int^{T_*}_0\int_{\mathbb{R}^3 \times\mathbb{R}^3}f(\partial_t +v\cdot \nabla_x)\phi dxdvdt\cr
            &=\int^{T_*}_0\int_{\mathbb{R}^3 \times\mathbb{R}^3}(\mathcal{M}_{-1/2}[f]-f)\phi dxdvdt,
        \end{split}
    \end{align*}
    where $\phi\in C^1_c(\mathbb{R}_+\times\mathbb{R}^3\times\mathbb{R}^3)$ is a test function with $\phi(T_*,x,v)=0$. 
\end{definition}

\begin{theorem}\label{thm:weak}
    Let $\nu=-1/2$. Assume that $f_0(x,v)\geq 0$ satisfies
    \begin{align*}
        \begin{split}
            \int_{\mathbb{R}^6}(1+|v|^2+|x|^2+|\ln f_0|)f_0 dxdv<\infty.
        \end{split}
    \end{align*}
    Then, there exists a global weak solution $f(t,x,v)\geq 0$ to \eqref{eq:endpoint-esbgk-eq} which satisfies
    \begin{align*}
        \begin{split}
            \int_{\mathbb{R}^6}(1+|v|^2+|x|^2+|\ln f(t)|)f(t)dxdv<\infty,
        \end{split}
    \end{align*}
    with the conservation laws and the entropy dissipation:
    \begin{align*}
        \begin{split}
            &\int_{\mathbb{R}^6}\begin{pmatrix}
                1\\ v\\ |v|^2
            \end{pmatrix} f(t) dxdv=  \int_{\mathbb{R}^6}\begin{pmatrix}
                1\\ v\\ |v|^2
            \end{pmatrix} f_0 dxdv,\cr
            &\int_{\mathbb{R}^6}f(t)\ln f(t)dxdv\leq \int_{\mathbb{R}^6}f(s)\ln f(s)dxdv,\text{ for }t\geq s.
        \end{split}
    \end{align*}
\end{theorem}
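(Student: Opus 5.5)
The plan is to obtain the weak solution as a limit of the mild solutions built in Theorem~\ref{thm:endpoint-mild-solution}, applied to regularized data. Fix $T_*>0$ and approximate $f_0$ by a sequence $f_{0,\varepsilon}\ge 0$ chosen so that:
\begin{itemize}
\item $f_{0,\varepsilon}$ meets every hypothesis of Theorem~\ref{thm:endpoint-mild-solution};
\item the moments $\int(1+|v|^2+|x|^2+|\ln f_{0,\varepsilon}|)f_{0,\varepsilon}$ are bounded uniformly in $\varepsilon$;
\item $f_{0,\varepsilon}\to f_0$ in $L^1_2$.
\end{itemize}
A concrete choice is
\[
f_{0,\varepsilon}=\min\{f_0\ast\varphi_\varepsilon,\varepsilon^{-1}\}\,\chi_{\{|x|+|v|\le\varepsilon^{-1}\}}+\varepsilon\,\langle x\rangle^{-r}\langle v\rangle^{-q-r}\mu(v),
\]
where $\mu$ is a Gaussian. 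The added term enforces the free-transport non-vacuum condition \eqref{eq:free-transport-nonvacuum-endpoint}. Indeed, for $|t|\le T_*$,
\[
\int\langle x-vt\rangle^{-r}\mu(v)\,dv\gtrsim_{T_*}\langle x\rangle^{-r},
\]
which follows from $\langle x\rangle\le C(1+T_*)\langle x-vt\rangle\langle v\rangle$. Theorem~\ref{thm:endpoint-mild-solution} then gives mild solutions $f_\varepsilon$ on $[0,T_*]$ with conservation of mass, momentum and energy and with entropy dissipation.

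Next I will derive bounds uniform in $\varepsilon$. Mass, momentum and energy are conserved. For the $x$-moment, I multiply the mild form by $|x|^2$ and use
\[
\frac{d}{dt}\int|x|^2f=2\int x\cdot v\,f,
\]
which holds because relaxation conserves $\int|x|^2\cdot\,dv$ pointwise in $x$ through mass conservation. This gives $\int|x|^2f_\varepsilon(t)\le C(T_*)$. The entropy is controlled by the dissipation \eqref{eq:endpoint-entropy-dissipation}. Combining it with the standard bound of $\int f|\ln f|$ by $\int f\ln f$ plus $\int(|x|^2+|v|^2)f$ yields $\sup_t\int f_\varepsilon|\ln f_\varepsilon|\le C$. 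By Dunford--Pettis, $\{f_\varepsilon\}$ is then weakly compact in $L^1([0,T_*]\times\mathbb R^6)$. The relaxation terms $\mathcal M_{-1/2}[f_\varepsilon]$ inherit bounded mass and energy. They also inherit the entropy bound: by \eqref{eq:entropy compare 1}--\eqref{eq:entropy compare 2}, $\mathcal H(\mathcal M_{-1/2}[f_\varepsilon])\le\mathcal H(f_\varepsilon)$, and the lower-order moments handle the negative part. So the relaxation terms are weakly compact as well.

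The main obstacle is identifying the nonlinear limit $\mathcal M_{-1/2}[f_\varepsilon]\rightharpoonup\mathcal M_{-1/2}[f]$. I will follow the Perthame approach. First, velocity averaging applies, since $(\partial_t+v\cdot\nabla_x)f_\varepsilon$ is bounded in $L^1$ and $f_\varepsilon$ is equi-integrable. It gives strong $L^1_{t,x}$ compactness of $\int f_\varepsilon\psi(v)\,dv$ for bounded $\psi$. To extend this to $\psi=1,v$ and to each entry of $v\otimes v$, I need uniform integrability of the second moments in $v$. That is where I expect trouble, because energy bounds alone give only $|v|^2$ integrability, not tightness of the $v\otimes v$ moment. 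I would first try to propagate a slightly higher moment such as $\int|v|^{2+\delta}f$ by a Gronwall argument using the Gaussian structure of $\mathcal M_{-1/2}$. Its $|v|^{2+\delta}$-moment is bounded by $\rho(|U|^2+\operatorname{tr}\Theta)^{1+\delta/2}$, and this is not obviously bounded by $\int|v|^{2+\delta}f$. If that fails, I fall back on Fatou-type arguments: convergence of $\rho,\rho U$ a.e., with $\rho\Theta$ converging weakly and a.e. only up to a defect. I will then check that the defect does not spoil the a.e. convergence of $\mathcal M$.

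With $\rho_\varepsilon,U_\varepsilon,\Theta_\varepsilon$ converging a.e. on $\{\rho>0\}$, Lemma~\ref{lem:theta-positive-definite} gives positive definiteness of the limit $\Theta$. Hence $\mathcal T_{-1/2}$ is nondegenerate there, $\mathcal M_{-1/2}[f_\varepsilon]\to\mathcal M_{-1/2}[f]$ a.e., and equi-integrability upgrades this to $L^1$ convergence by Vitali. On $\{\rho=0\}$, $\mathcal M_{-1/2}[f_\varepsilon]$ tends to $0$ in $L^1$ because its mass $\rho_\varepsilon$ does.

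Finally, I pass to the limit in the weak formulation. The conservation laws follow from strong convergence of the moments and tightness, which is again where the higher moment is needed for the energy. If only weak lower semicontinuity is available, I obtain energy $\le$ and mass and momentum $=$. Entropy dissipation in the form $t\ge s$ follows from convexity and weak lower semicontinuity of $\mathcal H$, together with the fact that each $f_\varepsilon$ dissipates entropy on every subinterval, which holds by rerunning the mild-solution argument from time $s$. A diagonal argument in $T_*\to\infty$ then gives the global weak solution.
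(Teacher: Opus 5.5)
There is a genuine gap at exactly the step you flag: strong $L^1_{\mathrm{loc}}$ (and a.e.) convergence of the second-moment matrix $S^\varepsilon=\int v\otimes v\,f^\varepsilon\,dv$. Without it you do not get $\Theta^\varepsilon\to\Theta$ and $\mathcal T^\varepsilon\to\mathcal T_{-1/2}[f]$ a.e., and the identification of the limit of $\mathcal M^\varepsilon:=\mathcal M_{-1/2}[f^\varepsilon]$ fails. Neither of your remedies closes the gap.

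The Gronwall comparison you doubt is actually true. Jensen gives $\rho(|U|^2+\operatorname{tr}\Theta)^{1+\delta/2}\le\int|v|^{2+\delta}f\,dv$, Gaussian moments of all orders are comparable, and $\operatorname{tr}\mathcal T_{-1/2}=\operatorname{tr}\Theta$. But the comparison does not help under the stated hypothesis, which gives only $\int|v|^2f_0<\infty$. Hence $\int|v|^{2+\delta}f_0^\varepsilon$ is not bounded uniformly in $\varepsilon$.

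Your fallback of ``checking that the defect does not spoil the limit'' cannot succeed in general. Because $\int|v|^2\mathcal M^\varepsilon\,dv=\int|v|^2f^\varepsilon\,dv$, energy of $f^\varepsilon$ that escapes to large velocities is carried by $\mathcal M^\varepsilon$ as extra temperature. A defect in $S$ would therefore produce a limiting Gaussian with the wrong covariance, assuming $\Theta^\varepsilon$ converges a.e.\ at all.

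The missing ingredient is the uniform \emph{local} third-moment bound \eqref{eq:uniform-local-third-moment}, $\int_0^{T_*}\int_{K_x}\int|v|^3f^\varepsilon\,dv\,dx\,dt\le C_{T_*,K_x}$. The paper takes it from Lemma~2 of \cite{perthame1989global}. It is a dispersion estimate rather than a propagated moment, so it needs only mass and energy. Test the equation with $|v|\,v\cdot\omega$, where $\omega=(x-y)/|x-y|$, and average over $y$ in a unit ball.
\begin{itemize}
\item The time-boundary and relaxation terms involve only $\int|v|^2f^\varepsilon$ and $\int|v|^2\mathcal M^\varepsilon$. Both are controlled by the conserved energy, again via $\operatorname{tr}\mathcal T_{-1/2}=\operatorname{tr}\Theta$.
\item The transport term yields the nonnegative quantity $|v|\bigl(|v|^2-(v\cdot\omega)^2\bigr)/|x-y|$. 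Its $y$-average is at least $c_K|v|^3$ for $x\in K_x$.
\end{itemize}
This gives uniform integrability of $|v|^2f^\varepsilon$ on $(0,T_*)\times K_x\times\mathbb R^3_v$. The velocity-averaging compactness then extends by truncation to $\psi=v\otimes v$, yielding $S^\varepsilon\to S$ in $L^1_{\mathrm{loc}}$ and a.e. From there your identification goes through: on $\{\rho>0\}$ via Lemma~\ref{lem:theta-positive-definite} and Vitali, and on $\{\rho=0\}$ via the vanishing mass.

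The rest of your outline matches the paper's: the regularization, the uniform bounds, and Dunford--Pettis for $f^\varepsilon$ and $\mathcal M^\varepsilon$. In the background term, take $r>5$ so that its $|x|^2$-moment is finite.

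Finally, an energy inequality, which is all your fallback would give, is weaker than the equality asserted in the statement. The paper's outline does not supply a separate argument for that point either.
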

    
Following the compactness strategy of \cite{park2016cauchy,perthame1989global}, we prove Theorem~\ref{thm:weak} as following process:
    \begin{enumerate}
        \item Construct the approximate solution $f^\varepsilon$ satisfying uniform mass, energy, spatial second moment, and entropy bounds, together with uniform local third velocity moment bounds (see Lemma~\ref{lem:wellposedness-f-eps-data}).
        
        \item Extract a weakly convergent subsequence $f^\varepsilon\rightharpoonup f$ in $L^1$ using the Dunford--Pettis criterion (see Lemma~\ref{lem:endpoint-weak-compactness-f}).
        
        \item Establish strong local $L^1$ convergence of the macroscopic fields using velocity averaging and moment bounds (see Lemma~\ref{lem:endpoint-strong-macro-convergence}).
        
        \item Prove relative weak compactness of $\mathcal{M}_{-1/2}[f^{\varepsilon}]$ in $L^1$ and identify its limit as $\mathcal{M}_{-1/2}[f]$ (see Lemma~\ref{lem:endpoint-maxwellian-uniform-integrability} and Proposition~\ref{prop:endpoint-identification-M}).
    \end{enumerate}
    After this process, since the Gaussian is the only nonlinear term in the ellipsoidal BGK model, the weak convergence $\mathcal{M}_{-1/2}[f^{\varepsilon}]\rightharpoonup \mathcal{M}_{-1/2}[f]$ allows us to pass to the limit in the weak formulation and conclude that $f$ is a weak solution.
    
    We assume that there is initial data $f_0$ that satisfies
    \begin{align}\label{eq:sec 5 f initial assumption}
        \begin{split}
            f_0\geq0,\qquad \int_{\mathbb{R}^6}(1+|x|^2+|v|^2+|\ln f_0|)f_0 dxdv<\infty.    
        \end{split}
    \end{align}
    Let $\zeta\in C^{\infty}_c(\mathbb{R}^6)$ be nonnegative, supported in the unit ball, and satisfy
    \begin{align*}
        \begin{split}
            \int_{\mathbb{R}^6}\zeta dxdv=1.
        \end{split}
    \end{align*}
    Set 
    \begin{align*}
        \begin{split}
            \zeta_\varepsilon(x,v):=\varepsilon^{-6}\zeta\left(\frac{x}{\varepsilon},\frac{v}{\varepsilon} \right).   
        \end{split}
    \end{align*}
    Then, for $r>5$ and $q>5$, we define the regularized initial data $f^\varepsilon_0$ as
    \begin{align}\label{eq:f epsilon define}
        \begin{split}
            f^\varepsilon_0=\zeta_{\varepsilon}*\bigl(f_0 \chi_{\{|x|^2+|v|^2\le \varepsilon^{-2} \} }\bigr) +\varepsilon\langle x\rangle^{-r}e^{-|v|^2},\qquad 0<\varepsilon\leq 1,    
        \end{split}
    \end{align}
    We can easily see that
    \begin{align*}
        \begin{split}
            f^\varepsilon_0\geq0,\qquad \sup_{0<\varepsilon\leq 1}\int_{\mathbb{R}^6}(1+|x|^2+|v|^2+|\ln f^\varepsilon_0|)f^\varepsilon_0 dxdv<\infty,    
        \end{split}
    \end{align*}
    and
    \begin{align*}
        \begin{split}
            f^{\varepsilon}_0\longrightarrow f_0,\text{ in }L^1_2(\mathbb{R}^3\times\mathbb{R}^3).
        \end{split}
    \end{align*}
    \begin{remark}
        In Section~\ref{sec:endpoint-mild-solutions}, the condition $r>3$ suffices for spatial integrability. Here we choose $r>5$ to ensure that the positive background $\varepsilon\langle x\rangle^{-r}e^{-|v|^2}$ in \eqref{eq:f epsilon define} has a finite second spatial moment. We note that the exponent $r$ is a parameter of the regularization and imposes no additional decay assumption on $f_0$.
    \end{remark}
     Now we consider a family of nonnegative approximate
	solutions \(\{f^\varepsilon\}_{\varepsilon>0}\) of
	\begin{align}
		\partial_t f^\varepsilon+v\cdot\nabla_x f^\varepsilon
		=
		\mathcal M_{-1/2}[f^\varepsilon]-f^\varepsilon,
		\qquad
		f^\varepsilon\big|_{t=0}=f_0^\varepsilon,
		\label{eq:endpoint-approximate-equation}
	\end{align}
	where
	\begin{align*}
		\mathcal M_{-1/2}[f^\varepsilon](t,x,v)
		=
		\frac{\rho^\varepsilon}
		{\sqrt{\det(2\pi\mathcal T^\varepsilon)}}
		\exp\left(
		-\frac12
		(v-U^\varepsilon)^T(\mathcal T^\varepsilon)^{-1}
		(v-U^\varepsilon)
		\right),
	\end{align*}
	and
	\begin{align*}
		\mathcal T^\varepsilon
		:=
		\mathcal T_{-1/2}[f^\varepsilon]
		=
		\frac12\bigl((\operatorname{tr}\Theta^\varepsilon)I-\Theta^\varepsilon\bigr).
	\end{align*}
	As before,
	\begin{align*}
		\rho^\varepsilon
		:={{\int_{\mathbb R^3}}f^\varepsilon\,dv},\qquad
        \rho^\varepsilon U^\varepsilon
		:={{\int_{\mathbb R^3}}v f^\varepsilon\,dv},\qquad
        \rho^\varepsilon\Theta^\varepsilon
		&:={{\int_{\mathbb R^3}}
			(v-U^\varepsilon)\otimes(v-U^\varepsilon)f^\varepsilon\,dv}.
	\end{align*}
    
    \begin{lemma}\label{lem:wellposedness-f-eps-data}
        Let $f_0$ and $f^{\varepsilon}_0$ be defined in \eqref{eq:sec 5 f initial assumption} and \eqref{eq:f epsilon define}. Then, for each $0<\varepsilon\leq 1$ and final time $T_*>0$, there exists a unique nonnegative mild solution $f^\varepsilon\in C\bigl([0,T_*];L^1_2(\mathbb{R}^6) \bigr)$ to \eqref{eq:endpoint-approximate-equation}. Moreover, the following bounds hold uniformly in $\varepsilon$:
        \begin{align}
		\int_0^{T_*}\!\int_{\mathbb R^3_x\times\mathbb R^3_v}
		\bigl(1+|v|^2+|x|^2+|\ln f^\varepsilon|\bigr)
		f^\varepsilon\,dv\,dx\,dt
		\le C_{T_*},
		\label{eq:uniform-f-eps-entropy-moment}
	\end{align}
    and for every compact set $K_x\subset\subset\mathbb{R}^3_x$,
    \begin{align}
		\int_0^{T_*}\!\int_{K_x}\!\int_{\mathbb R^3_v}
		|v|^3 f^\varepsilon\,dv\,dx\,dt
		\le
		C_{T_*,K_x}.
		\label{eq:uniform-local-third-moment}
	\end{align}
    \end{lemma}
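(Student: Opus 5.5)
The proof has two parts. First, for each fixed $\varepsilon$, we verify that $f_0^\varepsilon$ satisfies the hypotheses of Theorem~\ref{thm:endpoint-mild-solution}, which yields the unique mild solution $f^\varepsilon$. Second, we derive the $\varepsilon$-uniform bounds from conservation laws, entropy dissipation and a weighted $x$-moment computation along the mild formulation.

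\textbf{Step 1: hypotheses of Theorem~\ref{thm:endpoint-mild-solution}.} The mollified truncated part $\zeta_\varepsilon*(f_0\chi_{\{|x|^2+|v|^2\le\varepsilon^{-2}\}})$ is bounded by $\varepsilon^{-6}\|f_0\|_{L^1}$ and is supported in $\{|x|,|v|\le \varepsilon^{-1}+\varepsilon\}$. It therefore lies in $L^\infty_{q+r}\cap L^\infty_{r,q}$ for every $q,r$. The background $\varepsilon\langle x\rangle^{-r}e^{-|v|^2}$ lies in these spaces as well. The $L^1_2$ bound and $\int f_0^\varepsilon|\ln f_0^\varepsilon|<\infty$ follow from the boundedness of $f_0^\varepsilon$ and its Gaussian-type lower and upper bounds. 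For the non-vacuum condition \eqref{eq:free-transport-nonvacuum-endpoint} we use only the background:
\begin{align*}
\int_{\mathbb R^3}\varepsilon\langle x-vt\rangle^{-r}e^{-|v|^2}\,dv\ge \varepsilon\int_{|v|\le1}\langle x-vt\rangle^{-r}e^{-1}\,dv\ge c\,\varepsilon\,\langle x\rangle^{-r}(1+T_*)^{-r}
\end{align*}
for $t\le T_*$, using $\langle x-vt\rangle\le C(1+T_*)\langle x\rangle$. Theorem~\ref{thm:endpoint-mild-solution} then gives $f^\varepsilon$, together with the conservation laws \eqref{eq:endpoint-conservation-laws} and the entropy dissipation \eqref{eq:endpoint-entropy-dissipation}. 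The constants there depend on $\varepsilon$, which is harmless at this stage.

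\textbf{Step 2: uniform moments and entropy.} Mass and energy are conserved, so $\int(1+|v|^2)f^\varepsilon(t)$ equals its value at $t=0$, which is bounded uniformly in $\varepsilon$. For the spatial moment, we multiply the mild form by $|x|^2$, integrate, and change variables $x\mapsto x+v(t-s)$. Using $|x+v\tau|^2\le 2|x|^2+2T_*^2|v|^2$ and the facts that $\mathcal M_{-1/2}[f^\varepsilon]$ and $f^\varepsilon$ have the same mass and energy at each $x$ (shown in the theorem's proof), we get
\begin{align*}
\int|x|^2f^\varepsilon(t)\le C\int(|x|^2+|v|^2)f_0^\varepsilon+C\int_0^t\int(|x|^2+|v|^2)f^\varepsilon(s)\,ds,
\end{align*}
and Gronwall gives a bound uniform in $\varepsilon$. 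To estimate $\int f^\varepsilon|\ln f^\varepsilon|$, we write $|\ln f|=\ln f+2\ln^-f$. The term $\int f\ln f$ is controlled from above by the entropy dissipation and the uniform bound on $\int f_0^\varepsilon\ln f_0^\varepsilon$. The negative part is controlled by the classical splitting on $\{f\ge e^{-|x|^2-|v|^2}\}$ and its complement:
\begin{align*}
\int f\ln^-f\le\int(|x|^2+|v|^2)f+C\int e^{-(|x|^2+|v|^2)/2}.
\end{align*}
Together these give \eqref{eq:uniform-f-eps-entropy-moment}.

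\textbf{Step 3: local third moment (main obstacle).} This is the hardest part, since $|v|^3$ is not a conserved weight. I would follow Perthame's argument. Test the equation, in its mild/weak form, against $\phi=\chi(x)\,\frac{x\cdot v}{\langle x\rangle}$ with $\chi$ a cutoff (or $\chi\equiv 1$), after multiplying by $|v|^2$; that is, use the weight $|v|^2\,v\cdot x/\langle x\rangle$. Transport produces $|v|^2\bigl(|v|^2-(x\cdot v)^2/\langle x\rangle^2\bigr)/\langle x\rangle$, which is not coercive. The simpler standard choice $\phi=v\cdot x/|x|$ gives $v\cdot\nabla_x\phi=(|v|^2-(v\cdot x/|x|)^2)/|x|$. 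Averaging over centers $x_0\in K_x$ in the usual way, this controls $\int_{K_x}|v|\,|v|^2f$ up to a constant factor, by the boundary terms $\int|v|^3 f$ at $t=0,T_*$ weighted by $1$. Those boundary terms are unfortunately cubic again. I would therefore instead use $\phi=|v|^2\,\dfrac{(x-x_0)\cdot v}{|x-x_0|\,|v|}$, which is bounded by $|v|^2$, so the boundary terms are controlled by the energy. Transport then yields $|v|^2\cdot|v|\,(1-\cos^2)/|x-x_0|$, and averaging $x_0$ over a ball recovers $\int_{K_x}|v|^3f$. The collision term $\int(\mathcal M-f)\phi$ is bounded by $\int|v|^2(\mathcal M+f)$, which is twice the energy. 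Checking that the angular average over $x_0$ produces a uniform positive factor, and justifying the test function for mild solutions via the $L^\infty_{q+r}$ regularity of Step 1, is where I expect the main technical effort.
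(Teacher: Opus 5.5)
Your proposal is correct and follows the same route as the paper: existence from Theorem~\ref{thm:endpoint-mild-solution}, then conservation of mass and energy plus a Gronwall bound for $\int|x|^2f^\varepsilon$, then the $H$-theorem with the classical splitting for the negative part of $f^\varepsilon\ln f^\varepsilon$. Along the way you usefully verify the non-vacuum condition via the background term $\varepsilon\langle x\rangle^{-r}e^{-|v|^2}$, which the paper leaves implicit.

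The only difference is the local third-moment bound. The paper simply cites Perthame's lemma, while you reconstruct it correctly with the multiplier $|v|\,\frac{(x-x_0)\cdot v}{|x-x_0|}$ averaged over $x_0$ in a ball. Your intermediate remark about $\phi=v\cdot x/|x|$ has the moment orders wrong: it gains only $|v|^2$, with first-order boundary terms. You discard it, though, so it does not affect the final argument.
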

    \begin{proof}
        For each fixed $\varepsilon>0$, the regularized initial datum $f_0^\varepsilon$ satisfies the assumptions of Theorem~\ref{thm:endpoint-mild-solution}, which yields the existence and uniqueness of the corresponding mild solution. To obtain estimates independent of $\varepsilon$, we use the uniform initial moment and entropy bounds:
        \begin{enumerate}
            \item Mass, energy, and spatial second-moment estimate:
            \begin{align*}
                \begin{split}
                    \int_{\mathbb{R}^6}(1+|v|^2)f^\varepsilon(t)dxdv=\int_{\mathbb{R}^6}(1+|v|^2)f^\varepsilon_0dxdv\leq C,
                \end{split}
            \end{align*}
            and
            \begin{align*}
                \begin{split}
                    \frac{d}{dt}\int_{\mathbb{R}^6}|x|^2f^{\varepsilon}dxdv=2\int_{\mathbb{R}^6}x\cdot vf^{\varepsilon}dxdv\leq \int_{\mathbb{R}^6}(|x|^2+|v|^2)f^{\varepsilon}dxdv
                \end{split}
            \end{align*}
            \item Entropy bound by $H$-theorem:
            \begin{align*}
                \begin{split}
                    \int_{\mathbb{R}^6}f^{\varepsilon}|\ln f^{\varepsilon}|dxdv=\mathcal{H}(f^\varepsilon)+2\int_{ \{f^\varepsilon\leq 1\} }-f^{\varepsilon}\ln f^{\varepsilon}dxdv\leq C_{T_*}.
                \end{split}
            \end{align*}
          \end{enumerate}
          Finally, we can also prove \eqref{eq:uniform-local-third-moment} by referring to Lemma 2 in \cite{perthame1989global}.
    \end{proof}
	
	\begin{lemma}
		\label{lem:endpoint-weak-compactness-f}
		There exists a subsequence, still denoted by \(f^\varepsilon\), and a
		nonnegative function
		\begin{align*}
			f\in L^1\bigl((0,T_*)\times\mathbb R^3_x\times\mathbb R^3_v\bigr)
		\end{align*}
		such that
		\begin{align}
			f^\varepsilon\rightharpoonup f
			\quad\text{weakly in }L^1,
			\label{eq:f-eps-weak-L1}
		\end{align}
		and
		\begin{align}
			vf^\varepsilon\rightharpoonup vf
			\quad\text{weakly in }L^1.
			\label{eq:vf-eps-weak-L1}
		\end{align}
		Moreover, for every compact set \(K_x\subset\subset\mathbb R^3_x\),
		\begin{align}
			v\otimes v\, f^\varepsilon
			\rightharpoonup
			v\otimes v\, f
			\quad\text{weakly in }
			L^1\bigl((0,T_*)\times K_x\times\mathbb R^3_v\bigr).
			\label{eq:vvf-eps-weak-local}
		\end{align}
	\end{lemma}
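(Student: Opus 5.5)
The plan is to combine the Dunford--Pettis criterion with the uniform bounds of Lemma~\ref{lem:wellposedness-f-eps-data}. I would treat the family $\{f^\varepsilon\}$ first, then $\{vf^\varepsilon\}$, then $\{v\otimes v\,f^\varepsilon\}$ locally in $x$, and identify every limit through the single limit $f$.

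\textbf{Step 1: weak compactness of $f^\varepsilon$.} By \eqref{eq:uniform-f-eps-entropy-moment}, $f^\varepsilon$ is bounded in $L^1((0,T_*)\times\mathbb R^6)$. The weight $1+|x|^2+|v|^2$ then gives tightness: for $R>0$,
\begin{align*}
\int_0^{T_*}\!\int_{\{|x|^2+|v|^2>R^2\}}f^\varepsilon\le C_{T_*}R^{-2}.
\end{align*}
Equi-integrability follows from the entropy part of \eqref{eq:uniform-f-eps-entropy-moment}: on any measurable set $A$ and for $M>1$,
\begin{align*}
\int_A f^\varepsilon\le M|A|+\frac{1}{\ln M}\int f^\varepsilon|\ln f^\varepsilon|.
\end{align*}
Dunford--Pettis then yields a subsequence with $f^\varepsilon\rightharpoonup f$ weakly in $L^1$. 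Nonnegativity of $f$ follows by testing against $\chi_A$ for $A$ arbitrary.

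\textbf{Step 2: the first moments.} The family $vf^\varepsilon$ is bounded in $L^1$, since $|v|\le 1+|v|^2$. Equi-integrability comes from the splitting
\begin{align*}
\int_A|v|f^\varepsilon\le R\int_A f^\varepsilon+R^{-1}\int|v|^2f^\varepsilon,
\end{align*}
where the second term is controlled uniformly by the energy bound. Tightness is handled the same way. Extracting a further subsequence gives $vf^\varepsilon\rightharpoonup g$ weakly in $L^1$. To show $g=vf$, I would test against $\phi\in L^\infty$ with compact support. Then $v\phi\in L^\infty$, so $\int vf^\varepsilon\phi\to\int vf\phi$. Since such $\phi$ determine an $L^1$ function, $g=vf$.

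\textbf{Step 3: the second moments on $K_x$.} This is the main obstacle. The energy bound only controls $|v|^2f^\varepsilon$ in $L^1$, not its equi-integrability. This is exactly where the local third moment \eqref{eq:uniform-local-third-moment} enters. On $(0,T_*)\times K_x\times\mathbb R^3_v$ we have
\begin{align*}
\int_A|v|^2f^\varepsilon\le R^2\int_A f^\varepsilon+R^{-1}\int_0^{T_*}\!\int_{K_x}\!\int|v|^3f^\varepsilon,
\end{align*}
which gives equi-integrability. Tightness in $v$ follows from the same third-moment bound, since $K_x$ is already bounded. Dunford--Pettis again produces a weak limit, and the identification with $v\otimes v\,f$ proceeds as in Step 2, testing against bounded compactly supported functions. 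To obtain one subsequence valid for all compact $K_x$, I would apply a diagonal argument over the balls $K_x=B_m$, $m\in\mathbb N$.
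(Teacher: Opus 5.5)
Your proposal is correct and follows the paper's route: Dunford--Pettis for $f^\varepsilon$ and $vf^\varepsilon$ from the moment and entropy bounds, then the local third moment to control the large-$|v|$ tail of $|v|^2 f^\varepsilon$ on $(0,T_*)\times K_x\times\mathbb R^3_v$, with every limit identified through $f$. Your diagonal argument over balls makes explicit something the paper leaves implicit. In fact, since each limit is uniquely identified as $v f$ or $v\otimes v\,f$, the subsequence from Step 1 already converges and no further extraction is needed.
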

	
	\begin{proof}
		The bound \eqref{eq:uniform-f-eps-entropy-moment} gives uniform integrability and tightness of $\{f^{\varepsilon}\}$ and $\{vf^{\varepsilon} \}$. Hence, following the Dunford--Pettis criterion, we deduce
		\eqref{eq:f-eps-weak-L1} and \eqref{eq:vf-eps-weak-L1}, after extracting a
		subsequence.
		
		Fix $K_x\subset\subset\mathbb{R}^3_x$. The local third
		moment bound \eqref{eq:uniform-local-third-moment} implies uniform
		integrability of $\{v\otimes v f^\varepsilon\}$ on
		$(0,T_*)\times K_x\times\mathbb R^3_v$.  Indeed, for $R>1$,
		\begin{align*}
			\int_0^{T_*}\!\int_{K_x}\!\int_{|v|>R}
			|v|^2 f^\varepsilon\,dv\,dx\,dt
			\le
			\frac1R
			\int_0^{T_*}\!\int_{K_x}\!\int_{\mathbb R^3}
			|v|^3 f^\varepsilon\,dv\,dx\,dt
			\le
			\frac{C_{{T_*},K_x}}R.
		\end{align*}
		On $|v|\le R$, uniform integrability follows from that of
		$f^\varepsilon$. Therefore, the Dunford--Pettis
		theorem yields, up to a subsequence, \eqref{eq:vvf-eps-weak-local}, with the limit identified by \eqref{eq:f-eps-weak-L1}.
	\end{proof}
	
	\begin{lemma}[Strong convergence of macroscopic fields]
		\label{lem:endpoint-strong-macro-convergence}
		Let
		\begin{align*}
			\rho:=\int_{\mathbb R^3}f\,dv,
			\qquad
			m:=\int_{\mathbb R^3}vf\,dv,
			\qquad
			S:=\int_{\mathbb R^3}v\otimes v f\,dv.
		\end{align*}
		Then, after extracting a further subsequence if necessary, for every compact
		set \(K_x\subset\subset\mathbb R^3_x\),
		\begin{align}
			\rho^\varepsilon\to\rho,
			\qquad
			\rho^\varepsilon U^\varepsilon\to m,
			\qquad
			S^\varepsilon:=\int_{\mathbb R^3}v\otimes v f^\varepsilon\,dv\to S
			\label{eq:macro-strong-L1}
		\end{align}
		strongly in \(L^1((0,T_*)\times K_x)\) and almost everywhere on
		\((0,T_*)\times\mathbb R^3_x\).  In particular, on the set
		\begin{align*}
			E:=\{(t,x):\rho(t,x)>0\},
		\end{align*}
		we have
		\begin{align}
			U^\varepsilon\to U:=\frac{m}{\rho},
			\qquad
			\Theta^\varepsilon\to
			\Theta:=\frac{S}{\rho}-U\otimes U,
			\qquad
			\mathcal T^\varepsilon\to
			\mathcal T:=\frac12\bigl((\operatorname{tr}\Theta)I-\Theta\bigr)
			\label{eq:UT-convergence-ae}
		\end{align}
		almost everywhere on \(E\).
	\end{lemma}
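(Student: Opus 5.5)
The argument follows the standard velocity averaging route of \cite{perthame1989global,park2016cauchy}. Each $f^\varepsilon$ is a mild, hence renormalized/distributional, solution of
\begin{align*}
(\partial_t+v\cdot\nabla_x)f^\varepsilon=\mathcal M_{-1/2}[f^\varepsilon]-f^\varepsilon .
\end{align*}
By conservation of mass, the right-hand side is bounded in $L^1((0,T_*)\times\mathbb R^6)$ uniformly in $\varepsilon$, since $\|\mathcal M_{-1/2}[f^\varepsilon](t)\|_{L^1}=\|f^\varepsilon(t)\|_{L^1}$. By Lemma~\ref{lem:endpoint-weak-compactness-f}, $\{f^\varepsilon\}$ is weakly relatively compact in $L^1$. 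We will apply the $L^1$ velocity averaging lemma (Golse--Lions--Perthame--Sentis) with test functions $\psi(v)\in C_c(\mathbb R^3_v)$. This gives that $\int f^\varepsilon\psi\,dv$ is relatively compact in $L^1_{loc}((0,T_*)\times\mathbb R^3_x)$. Combined with the weak convergence \eqref{eq:f-eps-weak-L1}, the whole (sub)sequence then converges strongly to $\int f\psi\,dv$ in $L^1((0,T_*)\times K_x)$.

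The second step removes the compact support in $v$. Take a cutoff $\chi_R(v)$ with $\chi_R=1$ on $|v|\le R$ and support in $|v|\le 2R$. We write $\rho^\varepsilon=\int f^\varepsilon\chi_R\,dv+\int f^\varepsilon(1-\chi_R)\,dv$, and similarly with the weights $v$ and $v\otimes v$. The tail terms are controlled uniformly in $\varepsilon$. For $\rho$ and $m$ we use the energy bound \eqref{eq:uniform-f-eps-entropy-moment}, which gives tails of size $O(R^{-2})$ and $O(R^{-1})$. For $S$ we use the local third moment bound \eqref{eq:uniform-local-third-moment}, which gives a tail of size $\int_0^{T_*}\!\int_{K_x}\!\int_{|v|>R}|v|^2f^\varepsilon\le C_{T_*,K_x}/R$. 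The same tail bounds hold for $f$ by weak lower semicontinuity. A $3\varepsilon$-argument then yields \eqref{eq:macro-strong-L1} in $L^1((0,T_*)\times K_x)$. Taking an exhaustion $K_x=\overline{B_j}$ and a diagonal subsequence gives almost everywhere convergence on $(0,T_*)\times\mathbb R^3_x$.

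Finally, on $E=\{\rho>0\}$ we have $\rho^\varepsilon\to\rho>0$ almost everywhere, so $\rho^\varepsilon>0$ eventually at almost every point of $E$. The maps
\begin{align*}
(\rho,m,S)\mapsto m/\rho,\qquad (\rho,m,S)\mapsto S/\rho-(m/\rho)\otimes(m/\rho),
\end{align*}
are continuous on $\{\rho>0\}$, and $\Theta\mapsto\frac12((\operatorname{tr}\Theta)I-\Theta)$ is linear. Hence \eqref{eq:UT-convergence-ae} follows pointwise, with $\Theta^\varepsilon=S^\varepsilon/\rho^\varepsilon-U^\varepsilon\otimes U^\varepsilon$.

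The main obstacle is the second moment $S^\varepsilon$. Energy bounds alone do not give equi-integrability of $|v|^2f^\varepsilon$, so the averaging lemma cannot be applied directly with weight $|v|^2$. This is exactly why the local third moment \eqref{eq:uniform-local-third-moment} is needed: it turns the truncation error into $O(1/R)$ uniformly in $\varepsilon$. One should also check that the averaging lemma in $L^1$ is applicable, which requires weak compactness of $f^\varepsilon$ and not merely boundedness; this is supplied by the entropy bound through Lemma~\ref{lem:endpoint-weak-compactness-f}.
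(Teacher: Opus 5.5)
Your proposal is correct and follows the same route as the paper: velocity averaging applied to the approximate equation, then strong local $L^1$ convergence of the averages against $1$, $v$, $v\otimes v$, an a.e.\ subsequence, and division by $\rho^\varepsilon$ on $E$. The only difference is that you spell out steps the paper leaves implicit: the $v$-truncation, the tail control (energy bound for $\rho$ and $m$, local third moment for $S$), and the identification of the limit through the weak convergence of $f^\varepsilon$.
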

	
	\begin{proof}
		By Lemma~\ref{lem:endpoint-weak-compactness-f}, we have already shown the weak compactness of $f^{\varepsilon}$. Thus, the velocity averaging lemma \cite{golse1988regularity,perthame1989global} applied to
		\eqref{eq:endpoint-approximate-equation} gives compactness of the velocity
		averages of \(f^\varepsilon\). Then, we obtain strong local \(L^1\)
		convergence of the averages corresponding to
		\(1\), \(v\), and \(v\otimes v\).  This proves
		\eqref{eq:macro-strong-L1}.
		
		The almost everywhere convergence follows by extracting a subsequence from the
		strong local \(L^1\) convergence. On \(E=\{\rho>0\}\), we may divide by
		\(\rho^\varepsilon\to\rho>0\) to obtain
		\begin{align*}
			U^\varepsilon
			=
			\frac{\rho^\varepsilon U^\varepsilon}{\rho^\varepsilon}
			\to
			\frac{m}{\rho}=U.
		\end{align*}
		Since
		\begin{align*}
			\Theta^\varepsilon
			=
			\frac{S^\varepsilon}{\rho^\varepsilon}
			-
			U^\varepsilon\otimes U^\varepsilon,
		\end{align*}
		we also get \(\Theta^\varepsilon\to\Theta\) almost everywhere on \(E\).  The
		convergence of \(\mathcal T^\varepsilon\) follows from the linear relation
		\begin{align*}
			\mathcal T^\varepsilon
			=
			\frac12\bigl((\operatorname{tr}\Theta^\varepsilon)I-
			\Theta^\varepsilon\bigr).
		\end{align*}
	\end{proof}
	
	\begin{lemma}
		\label{lem:endpoint-maxwellian-uniform-integrability}
		There exists a constant \(C_{T_*}\), independent of \(\varepsilon\), such that
		\begin{align}
			\int_0^{T_*}\!\int_{\mathbb R^3_x\times\mathbb R^3_v}
			\bigl(1+|v|^2+|x|^2+|\ln\mathcal M_{-1/2}[f^\varepsilon]|\bigr)
			\mathcal M_{-1/2}[f^\varepsilon] \,dv\,dx\,dt
			\le
			C_{T_*}.
			\label{eq:endpoint-M-uniform-integrability}
		\end{align}
		Consequently, \(\{\mathcal M_{-1/2}[f^\varepsilon]\}\) is relatively weakly compact in
		\(L^1((0,T_*)\times\mathbb R^3_x\times\mathbb R^3_v)\).
	\end{lemma}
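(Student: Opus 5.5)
We bound each piece of \eqref{eq:endpoint-M-uniform-integrability} pointwise in $(t,x)$ by the corresponding moments of $f^\varepsilon$, and then invoke Lemma~\ref{lem:wellposedness-f-eps-data}. On the vacuum set $\mathcal M_{-1/2}[f^\varepsilon]=0$, so only points with $\rho^\varepsilon>0$ matter. There the Gaussian moment identities from the proof of Theorem~\ref{thm:endpoint-mild-solution} give
\begin{align*}
\int_{\mathbb R^3}(1+|v|^2)\mathcal M_{-1/2}[f^\varepsilon]\,dv=\int_{\mathbb R^3}(1+|v|^2)f^\varepsilon\,dv .
\end{align*}
The velocity integral does not involve $x$, so $\int|x|^2\mathcal M_{-1/2}[f^\varepsilon]\,dv=|x|^2\rho^\varepsilon=\int |x|^2 f^\varepsilon\,dv$. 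Integrating in $(t,x)$ and using \eqref{eq:uniform-f-eps-entropy-moment} bounds the first three parts of the weight by $C_{T_*}$.

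For the entropy part we split $|\ln\mathcal M|=\ln\mathcal M+2(\ln\mathcal M)_-$, writing $\mathcal M:=\mathcal M_{-1/2}[f^\varepsilon]$. The positive contribution uses the entropy comparison already established in the proof of Theorem~\ref{thm:endpoint-mild-solution}, namely $\mathcal H(\mathcal M_{-1/2}[f])\le \mathcal H(f)$. That proof rests on $\int f\ln\mathcal G_f=\int\mathcal G_f\ln\mathcal G_f$ and on $\det\mathcal T_{-1/2}\ge\det\Theta$. Both are pointwise in $(t,x)$, so the comparison holds for each $t$. Hence
\begin{align*}
\int_{\mathbb R^6}\mathcal M\ln\mathcal M\,dx\,dv\le \mathcal H(f^\varepsilon(t))\le \int_{\mathbb R^6} f^\varepsilon|\ln f^\varepsilon|\,dx\,dv ,
\end{align*}
and the right-hand side is uniformly bounded by Lemma~\ref{lem:wellposedness-f-eps-data}.

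The negative part is the main obstacle, since $\mathcal M\ln\mathcal M$ alone gives no control of $\int\mathcal M(\ln\mathcal M)_-$. We use the classical Carleman-type splitting. On $\{\mathcal M\ge e^{-|x|^2-|v|^2}\}$ we have $(\ln\mathcal M)_-\le |x|^2+|v|^2$, so this region contributes at most $\int(|x|^2+|v|^2)\mathcal M$, which is already bounded. On $\{\mathcal M< e^{-|x|^2-|v|^2}\}$ we use that $s\mapsto s|\ln s|$ is increasing on $(0,e^{-1})$, which gives
\begin{align*}
\mathcal M(\ln\mathcal M)_-\le C e^{-(|x|^2+|v|^2)/2}.
\end{align*}
This is integrable over $\mathbb R^6$ with a universal constant. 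Integrating over $t\in(0,T_*)$ completes \eqref{eq:endpoint-M-uniform-integrability}.

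For relative weak compactness, the bound with the $|\ln\mathcal M|$ weight gives uniform integrability via the de la Vallée-Poussin criterion: on $\{\mathcal M>k\}$ we have $\int\mathcal M\le C/\ln k$. The $(|x|^2+|v|^2)$ weight gives tightness. Dunford--Pettis then yields relative weak compactness in $L^1((0,T_*)\times\mathbb R^6)$.

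The only delicate point is to check that the pointwise entropy comparison requires nothing beyond $\rho^\varepsilon>0$ and finite moments. Lemma~\ref{lem:theta-positive-definite} ensures $\Theta^\varepsilon>0$ wherever $\rho^\varepsilon>0$, and $f^\varepsilon\in\Omega$ supplies the integrability needed to justify each step.
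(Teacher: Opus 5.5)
Your proposal is correct and follows essentially the same route as the paper. Both arguments match the non-logarithmic part to the moments of $f^\varepsilon$, control $\int\mathcal M\ln\mathcal M$ by the entropy comparison $\mathcal H(\mathcal M_{-1/2}[f^\varepsilon])\le\mathcal H(f^\varepsilon)$, bound the negative part by splitting at $\mathcal M = e^{-(|x|^2+|v|^2)}$, and conclude via Dunford--Pettis.
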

	
	\begin{proof}
		We write
		\begin{align*}
			\mathcal M^\varepsilon:=\mathcal M_{-1/2}[f^\varepsilon].
		\end{align*}
		The mass and energy moments of \(\mathcal M^\varepsilon\) coincide with those
		of \(f^\varepsilon\).  Indeed,
		\begin{align*}
			\int_{\mathbb R^3}\mathcal M^\varepsilon\,dv
			=
			\rho^\varepsilon
			=
			\int_{\mathbb R^3}f^\varepsilon\,dv,
		\end{align*}
		and, since
		\(\operatorname{tr}\mathcal T^\varepsilon=\operatorname{tr}\Theta^\varepsilon\),
		\begin{align*}
			\int_{\mathbb R^3}|v|^2\mathcal M^\varepsilon\,dv
			=
			\rho^\varepsilon
			\bigl(|U^\varepsilon|^2+
			\operatorname{tr}\mathcal T^\varepsilon\bigr)
			=
			\rho^\varepsilon
			\bigl(|U^\varepsilon|^2+
			\operatorname{tr}\Theta^\varepsilon\bigr)
			=
			\int_{\mathbb R^3}|v|^2f^\varepsilon\,dv.
		\end{align*}
		Also,
		\begin{align*}
			\int_{\mathbb R^3}|x|^2\mathcal M^\varepsilon\,dv
			=
			|x|^2\rho^\varepsilon
			=
			\int_{\mathbb R^3}|x|^2f^\varepsilon\,dv.
		\end{align*}
		Thus the part of \eqref{eq:endpoint-M-uniform-integrability} without the
		logarithm follows directly from \eqref{eq:uniform-f-eps-entropy-moment}.
		
		We now estimate the logarithmic part. First, we control the negative part of $\mathcal{M}^{\varepsilon}\ln \mathcal{M}^{\varepsilon}$. To do this, we set $R(x,v):=|x|^2+|v|^2$. For $e^{-R}\le \mathcal{M}^\varepsilon\leq 1$, we have
        \begin{align*}
            \begin{split}
                -\mathcal{M}^\varepsilon \ln \mathcal{M}^\varepsilon\leq R\mathcal{M}^\varepsilon.
            \end{split}
        \end{align*}
        For $0\leq \mathcal{M}^\varepsilon<e^{-R}$, the elementary bounds
        \begin{align*}
            \begin{split}
                -z\ln z\leq Ce^{-\frac{R}{2}},\qquad 0\le z\le e^{-R},
            \end{split}
        \end{align*}
        imply 
        \begin{align*}
            \begin{split}
                -\mathcal{M}^\varepsilon \ln \mathcal{M}^\varepsilon\leq Ce^{-\frac{R}{2}}.    
            \end{split}
        \end{align*}
        Therefore, the negative part is bounded as follows:
        \begin{align}\label{eq:Mx-negative}
            \begin{split}
                \int_{ \{\mathcal{M}^\varepsilon\leq 1 \} }-\mathcal{M}^\varepsilon \ln \mathcal{M}^\varepsilon dxdvdt&\leq \int^{T_*}_0\int_{\mathbb{R}^6}(|x|^2+|v|^2)\mathcal{M}^\varepsilon dxdvdt\cr
                &+C\int^{T_*}_0 \int_{\mathbb{R}^6} e^{-\frac{|x|^2+|v|^2}{2}}dxdvdt\cr
                &\leq C_{T_*}.
            \end{split}
        \end{align}
        On the other hand, we recall that
        \begin{align}\label{eq:Mx-total}
            \begin{split}
                \int^{T_*}_0 \int_{\mathbb{R}^6}\mathcal{M}^\varepsilon \ln \mathcal{M}^\varepsilon dxdvdt\leq \int^{T_*}_0 \int_{\mathbb{R}^6}f^\varepsilon \ln f^{\varepsilon}dxdvdt.
            \end{split}
        \end{align}
        This follows from
        \eqref{eq:entropy compare 1} and \eqref{eq:entropy compare 2}, which were used to obtain \eqref{eq:endpoint-entropy-dissipation}.
        Combining \eqref{eq:Mx-total} and \eqref{eq:Mx-negative}, \eqref{eq:uniform-f-eps-entropy-moment} in Lemma~\ref{lem:wellposedness-f-eps-data} leads to
        \begin{align*}
            \begin{split}
                \int^{T_*}_0 \int_{\mathbb{R}^6}\mathcal{M}^{\varepsilon}|\ln \mathcal{M}^\varepsilon|dxdvdt&= \int^{T_*}_0 \int_{\mathbb{R}^6}\mathcal{M}^{\varepsilon}\ln \mathcal{M}^\varepsilon dxdvdt + 2\int_{ \{\mathcal{M}^\varepsilon\leq 1 \} }-\mathcal{M}^\varepsilon \ln \mathcal{M}^\varepsilon dxdv dt\cr
                &\leq \int^{T_*}_0 \int_{\mathbb{R}^6}f^\varepsilon \ln f^{\varepsilon}dxdvdt+  2C_{T_*}\cr
                &\leq C_{T_*}.
            \end{split}
        \end{align*}
        Relative weak compactness follows again from \eqref{eq:endpoint-M-uniform-integrability} via the Dunford--Pettis theorem. This completes the proof.
	\end{proof}
	
	\begin{proposition}
		\label{prop:endpoint-identification-M}
		After extracting a subsequence, we have
		\begin{align*}
			\mathcal M_{-1/2}[f^\varepsilon]
			\rightharpoonup
			\mathcal M_{-1/2}[f]
			\quad\text{weakly in }
			L^1\bigl((0,T_*)\times\mathbb R^3_x\times\mathbb R^3_v\bigr),
		\end{align*}
		where \(\mathcal M_{-1/2}[f]\) is defined to be zero on the vacuum set
		\(\{\rho=0\}\).
	\end{proposition}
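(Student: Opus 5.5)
The plan is to combine the relative weak compactness of Lemma~\ref{lem:endpoint-maxwellian-uniform-integrability} with pointwise convergence almost everywhere. By that lemma, after extracting a subsequence, $\mathcal M^\varepsilon:=\mathcal M_{-1/2}[f^\varepsilon]\rightharpoonup \overline{\mathcal M}$ weakly in $L^1$ for some $\overline{\mathcal M}\ge0$. It then suffices to show $\overline{\mathcal M}=\mathcal M_{-1/2}[f]$ a.e. I would do this by proving $\mathcal M^\varepsilon\to\mathcal M_{-1/2}[f]$ almost everywhere in $(t,x,v)$. Weak $L^1$ convergence together with a.e.\ convergence identifies the limit: the family is uniformly integrable, so by Vitali it converges strongly in $L^1$ on sets of finite measure, and the two limits coincide.

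\textbf{The nonvacuum set $E=\{\rho>0\}$.} By Lemma~\ref{lem:endpoint-strong-macro-convergence}, a.e.\ on $E$ we have $\rho^\varepsilon\to\rho$, $U^\varepsilon\to U$ and $\mathcal T^\varepsilon\to\mathcal T$. The key point is that $\mathcal T$ is positive definite, so the Gaussian depends continuously on its parameters there. First I must check that $f(t,x,\cdot)$ has finite second moment and that $S$ really is $\int v\otimes v f\,dv$ for a.e.\ $(t,x)$; this holds by the local third-moment bound and Fatou. Then Lemma~\ref{lem:theta-positive-definite} applied to $f(t,x,\cdot)$ with $\rho>0$ shows that $\Theta$ is positive definite. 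Hence $\mathcal T=\tfrac12((\operatorname{tr}\Theta)I-\Theta)$ has eigenvalues $(\theta_i+\theta_j)/2>0$. Since $\det$ and the inverse are continuous on positive definite matrices, $\mathcal M^\varepsilon(t,x,v)\to\mathcal M_{-1/2}[f](t,x,v)$ for every $v$.

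\textbf{The vacuum set $\{\rho=0\}$.} Here $\mathcal M_{-1/2}[f]=0$ by convention, and I would avoid pointwise control of $\mathcal T^\varepsilon$, which may degenerate. Instead I would use mass: for any compact $K_x$,
\[
\int_0^{T_*}\!\int_{K_x}\chi_{\{\rho=0\}}\int_{\mathbb R^3}\mathcal M^\varepsilon\,dv\,dx\,dt=\int_0^{T_*}\!\int_{K_x}\chi_{\{\rho=0\}}\rho^\varepsilon\,dx\,dt\to\int\!\!\int\chi_{\{\rho=0\}}\rho=0,
\]
by the strong local $L^1$ convergence of $\rho^\varepsilon$. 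Because $\mathcal M^\varepsilon\ge0$, this gives $\mathcal M^\varepsilon\chi_{\{\rho=0\}}\to0$ strongly in $L^1_{\rm loc}$. Testing against $\chi_{\{\rho=0\}}\phi$ with $\phi\in L^\infty$ and using local tightness then shows $\overline{\mathcal M}=0$ a.e.\ on the vacuum set.

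\textbf{Combining the two pieces.} On $E\times\mathbb R^3_v$, I would apply Egorov on bounded sets together with the uniform integrability. This gives $\int\mathcal M^\varepsilon\chi_E\phi\to\int\mathcal M_{-1/2}[f]\chi_E\phi$ for bounded compactly supported $\phi$. Tightness from the $|x|^2+|v|^2$ moments in \eqref{eq:endpoint-M-uniform-integrability} removes the compact-support restriction, so $\overline{\mathcal M}=\mathcal M_{-1/2}[f]$ everywhere, and the whole subsequence converges weakly. I expect the main obstacle to be the nonvacuum step. There one has to justify that the a.e.\ limits $m$ and $S$ are genuinely the moments of $f$, so that the strict positivity of Lemma~\ref{lem:theta-positive-definite} applies to the limiting $\Theta$ even though no uniform ellipticity of $\mathcal T^\varepsilon$ is available. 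The vacuum set is handled by the mass argument rather than by any control of $\mathcal T^\varepsilon$.
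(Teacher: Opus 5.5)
Your proposal is correct and follows the paper's proof: you extract a weak limit from the entropy-based compactness of Lemma~\ref{lem:endpoint-maxwellian-uniform-integrability}, get a.e.\ convergence on $E=\{\rho>0\}$ from the positive definiteness of the limiting $\Theta$ (Lemma~\ref{lem:theta-positive-definite}) and continuity of the Gaussian in its parameters, and kill the vacuum set via $\int\mathcal M^\varepsilon\,dv=\rho^\varepsilon\to0$ in $L^1_{\mathrm{loc}}$. The only variation is on $E$, where you identify the weak limit by Vitali (uniform integrability plus a.e.\ convergence), whereas the paper applies Fatou to get $\mathcal M_{-1/2}[f]\le M$ and then closes with the mass identity $\int_E M=\int_E\rho=\int_E\mathcal M_{-1/2}[f]$; both arguments are valid.
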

	
	\begin{proof}
		By Lemma~\ref{lem:endpoint-maxwellian-uniform-integrability}, there exists a nonnegative function \(M\in L^1\) such that, up to a subsequence,
		\begin{align}
			\mathcal M^\varepsilon:=\mathcal M_{-1/2}[f^\varepsilon]
			\rightharpoonup M
			\quad\text{weakly in }L^1.
			\label{eq:M-eps-weak-to-M}
		\end{align}
		We identify \(M\).  On the non-vacuum set
		\begin{align*}
			E:=\{(t,x):\rho(t,x)>0\},
		\end{align*}
		Lemma~\ref{lem:endpoint-strong-macro-convergence} gives
		\begin{align*}
			\rho^\varepsilon\to\rho,
			\qquad
			U^\varepsilon\to U,
			\qquad
			\mathcal T^\varepsilon\to\mathcal T
		\end{align*}
		almost everywhere.  By Lemma~\ref{lem:theta-positive-definite}, for almost
		every \((t,x)\in E\), the stress tensor \(\Theta(t,x)\) is positive definite.
		Therefore \(\mathcal T(t,x)\) is positive definite, and the Gaussian formula is
		continuous in the parameters. Hence
		\begin{align}
			\mathcal M^\varepsilon(t,x,v)
			\to
			\mathcal M_{-1/2}[f](t,x,v)
			\quad\text{for a.e. }(t,x,v)\in E\times\mathbb R^3_v.
			\label{eq:M-pointwise-on-E}
		\end{align}
		
		Let \(\varphi\in L^\infty\) be nonnegative.  Fatou's lemma and
		\eqref{eq:M-eps-weak-to-M} imply
		\begin{align}
			\int_{E\times\mathbb R^3_v}
			\mathcal M_{-1/2}[f]\varphi\,dv\,dx\,dt
			\le
			\liminf_{\varepsilon\to0}
			\int_{E\times\mathbb R^3_v}
			\mathcal M^\varepsilon\varphi\,dv\,dx\,dt
			=
			\int_{E\times\mathbb R^3_v}M\varphi\,dv\,dx\,dt.
			\label{eq:M-limit-inequality}
		\end{align}
		Thus \(\mathcal M_{-1/2}[f]\le M\) almost everywhere on
		\(E\times\mathbb R^3_v\).  Taking \(\varphi\equiv1\) in
		\eqref{eq:M-limit-inequality}, and using \(\int\mathcal M^\varepsilon\,dv=\rho^\varepsilon\), we obtain
		\begin{align*}
			\int_{E\times\mathbb R^3_v}M\,dv\,dx\,dt
			=
			\lim_{\varepsilon\to0}\int_E\rho^\varepsilon\,dx\,dt
			=
			\int_E\rho\,dx\,dt
			=
			\int_{E\times\mathbb R^3_v}
			\mathcal M_{-1/2}[f] \,dv\,dx\,dt,
		\end{align*}
		where the convergence of \(\rho^\varepsilon\) is strong in
		\(L^1_{\mathrm{loc}}\), and the spatial moment bound allows passage to the
		whole space by truncation. Therefore, we have
		\begin{align*}
			M=
			\mathcal M_{-1/2}[f]
			\quad\text{a.e. on }E\times\mathbb R^3_v.
		\end{align*}
		
		It remains to consider the vacuum set \(E^c\).  Since \(\rho=0\) on \(E^c\)
		and \(\rho^\varepsilon\to\rho\) strongly in \(L^1_{\mathrm{loc}}\), we have
		for every compact set \(K_x\subset\subset\mathbb R^3_x\),
		\begin{align*}
			\int_{E^c\cap((0,T_*)\times K_x)}\!\int_{\mathbb R^3_v}
			\mathcal M^\varepsilon\,dv\,dx\,dt
			=
			\int_{E^c\cap((0,T_*)\times K_x)}\rho^\varepsilon\,dx\,dt
			\to0.
		\end{align*}
		The spatial moment bound again removes the compact restriction.  Hence
		\(M=0\) almost everywhere on \(E^c\times\mathbb R^3_v\).  This completes the proof.
	\end{proof}

\noindent{\bf Acknowledgment} This work was supported by the National Research Foundation of
Korea(NRF) grant funded by the Korea government(MSIT) (No.RS-2023-NR076676).\newline
\newline
\noindent\textbf{Use of Generative-AI tools declaration}
The authors used ChatGPT (OpenAI) to assist with English drafting and revision, and literature searches. This assistance concerned the Introduction and the exposition of proofs in Sections 2--5.

\bibliographystyle{abbrv}
\bibliography{ref} 
\end{document}